\def\resetMathstrut@{%
  \setbox\z@\hbox{%
    \mathchardef\@tempa\mathcode`\(\relax
    \def\@tempb##1"##2##3{\the\textfont"##3\char"}%
    \expandafter\@tempb\meaning\@tempa \relax
  }%
  \ht\Mathstrutbox@1.2\ht\z@ \dp\Mathstrutbox@1.2\dp\z@
}
\numberwithin{equation}{section}
\newtheorem*{theorem*1}{Theorem A}
\newtheorem*{proposition*2}{Proposition B}
\newtheorem*{proposition*1}{Proposition C}
\newtheorem{lemma}{Lemma}[section]
\newtheorem{proposition}[lemma]{Proposition}
\newtheorem{example}[lemma]{Example}
\newtheorem{theorem}[lemma]{Theorem}
\newtheorem{definition}[lemma]{Definition}
\newtheorem{corollary}[lemma]{Corollary}
\sloppy \theoremstyle{plain}
\newcommand{\End}{\operatorname{End}}
\newcommand{\Hom}{\operatorname{Hom}}
\renewcommand{\Im}{\operatorname{Im}}
\newcommand{\Ker}{\operatorname{Ker}}
\newcommand{\Irr}{\operatorname{Irr}}
\newcommand{\Ind}{\operatorname{Ind}}
\newcommand{\Rep}{\operatorname{Rep}}
\newcommand{\Res}{\operatorname{Res}}
\newcommand{\Ha}{\operatorname{H}}
\newcommand{\Supp}{\operatorname{Supp}}
\newcommand{\HS}{\operatorname{HS}}
\newcommand{\C}{\mathbb C}
\newcommand{\R}{\mathbb R}
\newcommand{\GL}{\operatorname{GL}}
\newcommand{\GU}{\operatorname{GU}}
\newcommand{\GSp}{\operatorname{GSp}}
\newcommand{\Sp}{\operatorname{Sp}}
\newcommand{\T}{\mathbb{T}}
\newcommand{\GO}{\operatorname{GO}}
\newcommand{\diag}{\operatorname{diag}}
\newcommand{\id}{\operatorname{Id}}
\newcommand{\Tr}{\operatorname{Tr}}
\begin{document}
\title{Notes on unitary theta representations of compact groups}
\keywords{Compact group, Unitary representation, Mackey-Clifford theory, Howe correspondence}
\date{\today}
\author{Chun-Hui Wang }
\address{School of Mathematics and Statistics\\Wuhan University \\Wuhan, 430072,
P.R. CHINA}
\email{cwang2014@whu.edu.cn}
\subjclass[2010]{11F27, 20C25, 22D10, 22D30.}
\begin{abstract}
We continue our work on understanding  Howe correspondences by using theta representations from $p$-adic groups to compact groups. We prove some results for unitary  theta representations of compact groups with respect to the induction and restriction  functors.
\end{abstract}
\maketitle
\tableofcontents
\section{Introduction}
This note is to continue our work on understanding  theta correspondence or Howe correspondence from the point of view of pure representation theory. Howe correspondences  originally arise  from  the Weil representation(\cite{We2})   by considering its restriction to  reductive dual pairs(cf.\cite{Ho1}\cite{Ku2}\cite{MVW}). According to many pioneering works (\cite{Ho1}\cite{Ho2}\cite{Ge}\cite{MVW}\cite{Ku}\cite{Ra}\cite{Wa},etc.), this kind of  correspondence   expands out in many different directions. Relevant articles and lectures by experts (\cite{Ga}\cite{Pr}\cite{Ho3},etc) also shed light on this topic.  Our main  purpose   is to work out some similar results  from  $p$-adic groups  \cite{Wan} to compact groups. In this article, we focus on  unitary representations of compact groups. In this certain part we mainly refer to the following books and papers: \cite{BeHa}\cite{Br}\cite{CH}\cite{Fa}\cite{Fo}\cite{HM1}\cite{HM2}\cite{HM}\cite{KT}.

  Let us begin by recalling the relevant  notations and definitions sketchily. Let $G$ be   a second-countable compact Hausdorff topological group. Let $\Rep(G)$(resp. $\widehat{G}$) denote the set of unitary equivalence classes of (resp. irreducible)  unitary representations of $G$.  Analogous of representations of $p$-adic groups, for $\pi \in \Rep(G) $, we let  $\mathcal{R}_{G}(\pi)=\{ \rho \in \widehat{G} \mid \Hom_G(\pi, \rho) \neq 0\}$.  Let us now consider the two-group case. Let  $(\Pi, V) \in \Rep(G_1\times G_2)$. For $(\pi_1, V_1)\in \mathcal{R}_{G_1}(\Pi)$, let $V_{\pi_1}$ denote the greatest $\pi_1$-isotypic quotient of $(\Pi,V)$.    By  Waldspurger's Lemmas on local radicals(cf. Lemmas \ref{waldspurger1}, \ref{waldspurger2}), $V_{\pi_1} \simeq \pi_1 \widehat{\otimes} \Theta_{\pi_1}$, for some  $ \Theta_{\pi_1}\in \Rep(G_2)$.  If for each $\pi_1\in \mathcal{R}_{G_1}(\Pi)$, $\mathcal{R}_{G_2}(\Theta_{\pi_1})$  contains only one element $\theta_{\pi_1} \in \widehat{G_2}$,  we can draw a map $\theta_1:  \mathcal{R}_{G_1}(\Pi) \longrightarrow \mathcal{R}_{G_2}(\Pi); \pi_1\longmapsto \theta_1(\pi_1)=\theta_{\pi_1}$;
in this case, we will call $\Pi$ a $\theta_1$-graphic or a graphic representation of $G_1\times G_2$. Similarly, we can define a $\theta_2$-graphic representation.
\begin{itemize}
\item[(1)] If $\Pi$ is a $\theta_1$ and $\theta_2$ representation, we will call $\Pi$ a $\theta$-bigraphic or a bigraphic representation of $G_1\times G_2$. In this case, $\pi_1 \longleftrightarrow \theta_1(\pi_1)$ defines a correspondence,  called theta correspondence or Howe correpondence,  between $\mathcal{R}_{G_1}(\Pi)$ and $\mathcal{R}_{G_2}(\Pi)$.
\item[(2)]  If $\Pi$ is a bigraphic and multiplicity-free representation of $G_1\times G_2$, we will call $\Pi$ a theta representation of $G_1\times G_2$.
\end{itemize}
These definitions originated  from the works of \cite{Ho1},\cite{Ho2},\cite{MVW}, etc. Now let us present the results that  we work out on these representations.  Let $H_1$, $H_2$ be two  normal closed  subgroups of $G_1$, $G_2$ respectively such that $\frac{G_1}{H_1} \simeq \frac{G_2}{H_2}$ under a map $\gamma$.  Let  $\Gamma(\gamma)$ denote its graph and assume  $\Gamma(\gamma) =\frac{\Gamma}{H_1\times H_2}$.   Let $(\rho, W)\in \Rep(\Gamma)$.
\begin{theorem*1}[Theorem \ref{Firstthm}]
$\Res_{H_1\times H_2}^{\Gamma} \rho$ is a theta representation of $H_1\times H_2$ iff $\Ind_{\Gamma}^{G_1\times G_2} \rho$  is a theta representation of $G_1\times G_2$.
\end{theorem*1}
Keep the assumptions. Assume now that $\frac{G_1}{H_1}$ is an abelian group. Without the consideration of the multiplicity, we proved the next two results:
  \begin{proposition*2}[Proposition \ref{Second}]
  For  $\rho \in \widehat{\Gamma}$, assume
 \begin{itemize}
\item[(1)]   $\Res_{H_1\times H_2}^{\Gamma} \rho$ is a bigraphic representation of $H_1\times H_2$,
\item[(2)] for each $\sigma\in \mathcal{R}_{H_1}(\rho)$, $\Ind_{H_1}^{G_1} \sigma$ is multiplicity-free.
\end{itemize}
Then  $\pi=\Ind_{\Gamma}^{G_1\times G_2} \rho$ is a $\theta_1$-graphic  representation of $G_1\times G_2$.
\end{proposition*2}
\begin{proposition*1}[Proposition \ref{thirdre}]
For  $\rho \in \widehat{\Gamma}$, assume
 \begin{itemize}
\item[(1)] $\pi=\Ind_{\Gamma}^{G_1\times G_2} \rho$  is a bigraphic representation of $G_1\times G_2$,
\item[(2)] for each $\pi_1\in \mathcal{R}_{G_1}(\pi)$, $ \Res_{H_1}^{G_1}\pi_1$ is multiplicity-free.
\end{itemize}
Then  $\Res_{H_1\times H_2}^{\Gamma} \rho$ is a $\theta_1$-graphic  representation of $H_1\times H_2$.
\end{proposition*1}
Known as a classical and manipulable theory for compact group, Mackey-Clifford theory is our main tool to verify the  above results.  As Rieffel pointed out in the introduction of \cite{Ri}, roughly speaking, Mackey-Clifford theory contains two parts for a normal group pair $(H, G)$ with $H\unlhd  G$. For $(\sigma, W)\in \widehat{H}$, let $I_G(\sigma) $ be the corresponding stability subgroup of $G$. The first part of  Mackey-Clifford theory studies the corresponding relationship between $\mathcal{R}_G(\Ind_H^G \sigma)$ and $\mathcal{R}_{I_G(\sigma)}(\Ind_H^{I_G(\sigma)}\sigma)$ by using the $\Ind$ and $\Res$ functors. The second part studies how to decompose irreducible elements of    $\mathcal{R}_{I_G(\sigma)}(\Ind_H^{I_G(\sigma)}\sigma)$ into projective representations of $I_G(\sigma)$ and $\frac{I_G(\sigma)}{H}$. To verify the results,  we  need to touch almost all  aspects of this two parts.  For such reason,  in section \ref{UNCG} we  collect some necessary results from the  references. The results include the following topics:  unitary  representations of compact groups(cf.\cite{Co}\cite{HM}\cite{KT}\cite{Ma4}),  Hilbert-Schmidt operator spaces(cf.,\cite{Bo}\cite{Ga2}), Induced representations(cf.\cite{KT}\cite{Ma1}), Frobenius reciprocity(cf.\cite{KT}\cite{Mo1}),  Mackey-Clifford theory(cf.\cite{CuRe}\cite{Is}\cite{KL}\cite{KT}\cite{Ma3}\cite{Ri}), Rieffel equivalence(cf.\cite{Ri}) and so on.  In section \ref{thetacompact}, we prove  our main results by using the tools coming from  Mackey-Clifford theory.  In the last section, we give some  easily considered examples;  for instance, we  give  some   theta representations of finite dimension from   Vign\'era's work \cite[Chap.1]{MVW} on reductive dual pairs.  It is our pleasure to thank Professor Guy Henniart  for insightful comments during the whole writing.  We also want to thank Professor Marc Rieffel for his suggestions. We  thank Professor Detlev Poguntke for   pointing  out some mistakes in the previous version.
\section{Unitary  representations of compact groups}\label{UNCG}

As is known that  unitary  representations of compact groups is a much classical field. Here we only collect some necessary results for us use in the next part.    Our main purpose of this section is to present  the Mackey-Clifford theory in the compact group case. We will mainly follow the notion and  conventions of  \cite{KT} and \cite{Ma4} to treat this part.

\subsection{Notation and conventions}\label{notation}
A compact  group means  a \emph{second-countable} compact \emph{Hausdorff} topological group in our circumstance  unless otherwise stated. Let $V, \langle, \rangle$ denote  a \emph{separable} Hilbert space, and $\mathcal{U}(V)$ the group of all unitary operators endowed with the strong operator topology. A unitary representation(cf.\cite[pp.346-347]{HM1},\cite[Lmm.1.1]{Ca2}) $\pi$ of $G$ is a continuous homomorphism $\pi: G \longrightarrow \mathcal{U}(V)$ with a $\C$-linear $G$-action on $V$; we also call $V$ a Hilbert  $G$-module or simply a $G$-module. A closed $G$-invariant   subspace of $V$  gives a unitary representation of $G$; we will call it  a  subrepresentation of $V$ or simply a  submodule of $V$. We call  $\pi$ irreducible if (1) $V\neq 0$, (2)  $0$ and $V$  are the only subrepresentations of $V$. To avoid confusion,  a usual  group representation  will be called  an algebraic  representation  or an algebraic module in this text.

For two unitary representations $(\pi_1, V_1)$, $(\pi_2, V_2)$ of $G$, let $\mathcal{B}(V_1, V_2)$ or $\Hom(V_1, V_2)$  denote the space of bounded linear operators from $V_1$ to $V_2$. Let $\Hom_G(\pi_1, \pi_2)$ or $\Hom_G(V_1, V_2)$ denote the set of all elements $T$ of $\mathcal{B}(V_1, V_2)$ satisfying $T\circ \pi_1(g)=\pi_2(g) \circ T $, for  all $g\in G$.   An element of $\Hom_G(\pi_1, \pi_2)$ is called a $G$-morphism or a $G$-intertwining operator from  $\pi_1$ to $\pi_2$.   If there exists a bijective unitary map $U: V_1 \longrightarrow V_2$ such that $U\circ \pi_1(g)=\pi_2(g)\circ U$,  we will say that $\pi_1$ is (unitarily) equivalent to $\pi_2$, and write $\pi_1\simeq \pi_2$. Let $\HS-\Hom_G(V_1, V_2)$ denote the subspace of $\Hom_G(V_1,V_2)$ of Hilbert-Schmidt operators.  Let $\widehat{G}$ denote the set of unitary equivalence classes of irreducible unitary representations of $G$, and  $\Rep(G)$ the set of equivalence classes of unitary representations of $G$.  For simplicity of notations, we often don't  distinguish a real  representation and its equivalence class in $\widehat{G}$.

For $(\pi, V)\in\Rep(G)$, let $(\overline{\pi}, \overline{V})$ denote the complex conjugate representation of $(\pi,V)$. For  $(\pi_1, V_1)$, $(\pi_2,V_2) \in \Rep(G)$,
  let $(\pi_1\widehat{\otimes} \pi_2, V_1\widehat{\otimes}V_2)$ denote the   Hilbert inner tensor product  representation of $G$ by  $(\pi_1, V_1)$ and $(\pi_2, V_2)$.
 \begin{itemize}
 \item[(1)] For $(\pi_1, V_1), (\pi_2, V_2)\in \Rep(G)$, denote $m_G(V_1, V_2)=\dim \Hom_G(V_1, V_2)$, which is a  natural number or infinite.
 \item[(2)] If $\pi_1\in \widehat{G}$, we will call $m_G(V_1, V_2)$ the \emph{multiplicity} of $\pi_1$ in $\pi_2$.
 \end{itemize}
Let us recall some notations from representations of $p$-adic groups(cf.\cite{BernZ},\cite{BushH},\cite{Ca}). For $\pi \in \Rep(G)$,  denote $\mathcal{R}_{G}(\pi)=\{ \rho \in \widehat{G} \mid \Hom_G(\pi, \rho) \neq 0\}$, and $\mathcal{L}_{G}(\pi)=\{ \rho \in \widehat{G} \mid \Hom_G(\rho, \pi) \neq 0\}$. In the compact case, $\mathcal{R}_{G}(\pi)=\Supp(\pi)=\mathcal{L}_{G}(\pi)$.

\subsubsection{Frobenius reciprocity}\label{Indrepre}
Let $H$ be a closed subgroup of $G$. For $(\sigma, W) \in \Rep(H)$, let $(\Ind_H^G \sigma, \Ind_H^G W)$ denote  the induced representation from $H$ to $G$ by $(\sigma, W)$.
By \cite[p.108, Thm.2.61]{KT} or \cite{Mo1}, we have:
\begin{theorem}[Frobenius reciprocity]\label{KT}
For $(\pi, V)\in \Rep(G)$, $(\rho, W) \in \Rep(H)$, if $\dim V<+\infty$, then $\Hom_G(\Ind_H^G \rho, \pi) \sim \Hom_H(\rho, \Res_H^G \pi)$,  equivalent as Banach spaces.
\end{theorem}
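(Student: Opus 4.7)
The plan is to construct a pair of bounded, mutually inverse linear maps directly. Realize $\Ind_H^G W$ in the Mackey model as $L^2$-classes of measurable $f\colon G\to W$ satisfying $f(hg)=\sigma(h)f(g)$ a.e., with inner product $\int_{H\backslash G}\langle f_1(g),f_2(g)\rangle_W\,d\dot g$ (normalize $\mathrm{vol}(H\backslash G)=1$) and $G$ acting by right translation. Given $\phi\in\Hom_H(\rho,\Res_H^G\pi)$, define
\[
\Phi(\phi)(f)=\int_{H\backslash G}\pi(g)^{-1}\phi(f(g))\,d\dot g.
\]
The integrand descends to $H\backslash G$ because $\phi$ is $H$-equivariant and $f$ satisfies the covariance condition; a right-translation change of variables on $G$ shows $\Phi(\phi)\in\Hom_G(\Ind_H^G\rho,\pi)$, and Cauchy--Schwarz yields $\|\Phi(\phi)\|\le\|\phi\|$, so $\Phi$ is bounded.

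For the reverse direction I exploit $\dim V<\infty$ through Hilbert adjoints. Given $T\in\Hom_G(\Ind_H^G\rho,\pi)$, the adjoint $T^{*}\colon V\to\Ind_H^G W$ is bounded and, because $T$ intertwines the unitary $G$-action, also $G$-equivariant; its image $\Im(T^{*})$ is then a $G$-stable subspace of dimension at most $\dim V$. The main obstacle is legitimizing pointwise evaluation: elements of $\Ind_H^G W$ are a priori only $L^2$-classes, so $f\mapsto f(e)$ is undefined on the whole space. Here the finite-dimensionality of $V$ is decisive: by Peter--Weyl, every finite-dimensional $G$-subrepresentation of $L^2(H\backslash G,W)$ is spanned by continuous matrix-coefficient functions, so each $T^{*}(v)$ admits a canonical continuous representative. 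Then $v\mapsto T^{*}(v)(e)$ is well-defined and linear from $V$ to $W$; the identities $T^{*}(v)(h)=\sigma(h)T^{*}(v)(e)$ and $T^{*}(\pi(h)v)=h\cdot T^{*}(v)$ combine to show it is $H$-equivariant. Taking Hilbert adjoints defines
\[
\Psi(T)=(\mathrm{ev}_e\circ T^{*})^{*}\in\Hom_H(W,V),
\]
and $\|\Psi(T)\|\le C_\pi\|T\|$, where $C_\pi$ is controlled by the formal dimensions of the irreducible constituents of $\pi$, again via Peter--Weyl applied to $\Im(T^{*})$.

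It then remains to verify $\Psi\circ\Phi=\id$ and $\Phi\circ\Psi=\id$. Transposing the integrand in $\Phi(\phi)$ gives $\Phi(\phi)^{*}(v)(g)=\phi^{*}(\pi(g)v)$, so $\Phi(\phi)^{*}(v)(e)=\phi^{*}(v)$, and adjointing once more returns $\phi$; this handles $\Psi\circ\Phi$. For $\Phi\circ\Psi$, use the relation $T^{*}(\pi(g)v)(e)=T^{*}(v)(g)$ (a direct consequence of the $G$-equivariance of $T^{*}$ together with right translation) and the pairing identity $\langle T(f),v\rangle=\int_{H\backslash G}\langle f(g),T^{*}(v)(g)\rangle\,d\dot g$ to check $\langle\Phi(\Psi(T))(f),v\rangle=\langle T(f),v\rangle$ for every $v\in V$. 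Combined with the norm bounds above, this yields the Banach-space equivalence; the only non-routine ingredient is the continuity of representatives in $\Im(T^{*})$, which is handled uniformly by Peter--Weyl thanks to the hypothesis $\dim V<\infty$.
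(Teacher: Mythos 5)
The paper does not prove this statement at all: it is imported verbatim from \cite[p.108, Thm.~2.61]{KT} (see also Moore's paper), so your argument is by necessity a different route, namely a self-contained direct construction, and it is essentially correct. The forward map $\Phi(\phi)(f)=\int_{H\backslash G}\pi(g)^{-1}\phi(f(g))\,d\dot g$ is the standard one, and the genuinely delicate point --- that for $T\in\Hom_G(\Ind_H^G\rho,\pi)$ the image of the adjoint $T^{*}$ is a finite-dimensional $G$-invariant subspace on which evaluation at the identity is legitimate --- is exactly where the hypothesis $\dim V<+\infty$ must enter, and you use it in the right place. Two points deserve to be made explicit when you write this out. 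First, since $W$ may be infinite-dimensional, ``spanned by continuous matrix-coefficient functions'' should be replaced by: $\Im(T^{*})$ lies in finitely many isotypic components of $L^2(G)\widehat{\otimes}W$ for the right regular action, each of the form $M_\sigma\otimes W$ with $M_\sigma\subseteq C(G)$ finite-dimensional, so every element is a finite sum $g\mapsto\sum_k m_k(g)w_k$ with $m_k$ continuous; this gives the unique continuous representative, and the covariance identity then holds everywhere (not just a.e.) by joint continuity of $(h,w)\mapsto\rho(h)w$ on compact sets, which is what actually justifies $T^{*}(v)(h)=\rho(h)T^{*}(v)(e)$. Second, the uniform bound $\|\Psi(T)\|\le C_\pi\|T\|$ can indeed be extracted from $f=\dim(\sigma)\,f*\overline{\chi_\sigma}$ on each isotypic piece, but it is not needed: both $\Hom$-spaces are closed in operator norm, hence Banach, so once $\Phi$ is a bounded linear bijection the open mapping theorem gives boundedness of $\Phi^{-1}=\Psi$, which is all that ``equivalent as Banach spaces'' (as opposed to isometric) requires. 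Your verifications $\Psi\circ\Phi=\operatorname{Id}$ and $\Phi\circ\Psi=\operatorname{Id}$, via $\Phi(\phi)^{*}(v)(g)=\phi^{*}(\pi(g)v)$ and $T^{*}(\pi(g)v)(e)=T^{*}(v)(g)$, are correct; the only blemishes are notational ($\sigma$ versus $\rho$ for the representation of $H$, and $L^2(H\backslash G,W)$ where the covariant subspace of $L^2(G,W)$ is meant).
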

For later use, we can also obtain:
\begin{theorem}[Frobenius reciprocity]\label{HSSP}
For $(\pi, V) \in \Rep(G)$, $(\rho, W) \in  \Rep(H)$,
 $\HS-\Hom_G(\Ind_H^G \rho, \pi) \simeq \HS-\Hom_H(\rho, \Res_H^G \pi)$, as Hilbert spaces.
\end{theorem}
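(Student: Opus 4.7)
The plan is to reduce Theorem \ref{HSSP} to Theorem \ref{KT} applied to the trivial representation of $G$, using the standard unitary realization of Hilbert--Schmidt operators as an invariant subspace of a completed Hilbert tensor product.

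First I would invoke the canonical unitary isomorphism $\HS(V_1, V_2) \simeq \overline{V_1} \widehat\otimes V_2$, valid for any pair of separable Hilbert $G$-modules (one of the results to be collected earlier in Section \ref{UNCG}). This identification is $G$-equivariant for the conjugation action on the left-hand side and the diagonal action on the right, so under it the subspace of $G$-equivariant Hilbert--Schmidt operators corresponds precisely to the space of $G$-invariants. Applying this once on each side of the desired isomorphism rewrites the claim as
\[
\bigl[\overline{\Ind_H^G \rho} \,\widehat\otimes\, \pi\bigr]^G \;\simeq\; \bigl[\overline{\rho} \,\widehat\otimes\, \Res_H^G \pi\bigr]^H.
\]

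The key step is then the unitary \emph{projection formula}
\[
\overline{\Ind_H^G \rho} \,\widehat\otimes\, \pi \;\simeq\; \Ind_H^G\bigl(\overline{\rho} \,\widehat\otimes\, \Res_H^G \pi\bigr)
\]
as Hilbert $G$-modules. Using the natural identification $\overline{\Ind_H^G \rho} \simeq \Ind_H^G \overline{\rho}$, the candidate map sends an elementary tensor $f \otimes v$ to the section $g \mapsto f(g) \otimes \pi(g)v$. One checks by direct calculation that this section has the required $H$-covariance (here the unitarity of $\pi(h)$ is what absorbs the twist), that the map is $G$-equivariant for the right-regular action, and that the pointwise isometry combined with Fubini over the normalized Haar measure on $H \backslash G$ makes it isometric on simple tensors; density of simple tensors in the completed tensor product then extends the map to a unitary isomorphism of Hilbert $G$-modules.

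Finally, taking $G$-invariants of both sides of the projection formula and applying Theorem \ref{KT} with the \emph{finite-dimensional} trivial representation $\mathbf{1}$ of $G$ gives
\[
\bigl[\Ind_H^G \tau\bigr]^G \;=\; \Hom_G(\mathbf{1},\Ind_H^G \tau) \;\simeq\; \Hom_H(\mathbf{1}, \tau) \;=\; [\tau]^H
\]
for $\tau = \overline{\rho} \,\widehat\otimes\, \Res_H^G \pi$. Chaining the three identifications yields the desired Hilbert-space isomorphism. The main technical obstacle is the unitary nature of the projection formula in step two: as an algebraic identity it is classical and routine, but verifying that the natural map extends to a genuine isometry of completed Hilbert modules requires careful bookkeeping with the Haar-measure normalizations on $G$, $H$ and $H \backslash G$, together with a density argument to pass from simple tensors back to the full completion; the remainder of the argument is then a formal consequence of Theorem \ref{KT} and the tensor-invariants description of $\HS$-Hom.
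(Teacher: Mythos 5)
Your proof is correct, but it takes a genuinely different route from the one in the paper. The paper's own argument is a reduction to irreducibles: it writes $\rho\simeq \widehat{\oplus}_i\rho_i$ and $\pi\simeq\widehat{\oplus}_j\pi_j$, observes that $\HS$-$\Hom$ turns Hilbert direct sums into $l^2$-direct sums, and then applies Theorem \ref{KT} to each pair $(\rho_i,\pi_j)$, where the target $\pi_j$ is automatically finite-dimensional because $G$ is compact, so that $\HS$-$\Hom$ and $\Hom$ coincide and have equal finite dimension on both sides; this yields an isomorphism of Hilbert spaces by a dimension count, but it is not canonical (it depends on the chosen decompositions). Your argument instead chains three \emph{natural unitary} identifications: $\HS(V_1,V_2)\simeq\overline{V_1}\widehat{\otimes}V_2$ intertwining conjugation with the diagonal action, the unitary projection formula $\Ind_H^G\overline{\rho}\,\widehat{\otimes}\,\pi\simeq\Ind_H^G(\overline{\rho}\,\widehat{\otimes}\,\Res_H^G\pi)$ (which is exactly \cite[Thm.2.58]{KT}, already invoked elsewhere in the paper in Lemma \ref{decom}, so the isometry bookkeeping you flag as the main obstacle is available off the shelf), and the computation of $G$-invariants of an induced module. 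This buys a canonical, basis-free isomorphism and makes the unitarity transparent, at the cost of carrying the projection formula. Two small points to tidy: Theorem \ref{KT} as stated gives $\Hom_G(\Ind_H^G\tau,\mathbf{1})\simeq\Hom_H(\tau,\mathbf{1})$ rather than the adjunction in the direction you quote, so either pass to adjoints (harmless for unitary representations) or simply note directly that a $G$-invariant element of $\Ind_H^G\tau$ is a constant function with value in $[\tau]^H$, which makes the last step elementary and avoids Theorem \ref{KT} altogether; and the equivalence in Theorem \ref{KT} is only asserted as one of Banach spaces, so if you do route through it you should close the argument with the remark that Hilbert spaces of equal dimension are unitarily isomorphic, as the paper implicitly does.
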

\begin{proof}
It suffices to verify that both sides are isomorphic as linear spaces.  Assume $\rho\simeq \widehat{\oplus}_{i=1}^{N_{\rho}}  \rho_i$, $ \pi\simeq \widehat{\oplus}_{j=1}^{N_{\pi}} \pi_j$, for some $\rho_i\in \widehat{H}$, $\pi_j\in \widehat{G}$, $N_{\rho}, N_{\pi}\in  \mathbb{N}^{\ast}=\mathbb{N}\cup \{+\infty\}$.  Hence $\HS-\Hom_G(\Ind_H^G \rho, \pi)\simeq \widehat{\oplus}_{i, j} \HS-\Hom_{G}(\Ind_H^G \rho_i, \pi_j)$, and $\HS-\Hom_H(\rho, \Res_H^G \pi)  \simeq \widehat{\oplus}_{i, j}\HS-\Hom_H(\rho_i, \Res_H^G \pi_j)$. Note that $\Hom_{G}(\Ind_H^G \rho_i, \pi_j) \simeq  \Hom_H(\rho_i, \Res_H^G \pi_j)$, and both spaces have the same finite  dimension.  Hence  $\HS-\Hom_{G}(\Ind_H^G \rho_i, \pi_j) \simeq \HS-\Hom_H(\rho_i, \Res_H^G \pi_j)$, and we are done!
\end{proof}
\subsubsection{Projective representations}
 Let us recall some basic results of projective representations of    compact groups  from \cite{Ma3}. Here we only collect some results for future  use, and one can see some
 related papers and books,  for examples  \cite{AM},\cite{Co},\cite{CH},\cite{HM1},\cite{KL},\cite{Mo0},\cite{Mo1},\cite{Mo2},\cite{Mo3},\cite{We} for details.
\begin{lemma}\label{mor}
 Let $G_1$, $G_2$ be two  separable\footnote{Here the separable means that $G_i$ contains a countable dense set. For the difference between separable and countably separated(\cite{Ma2}),  one can also refer to the  modern  book \cite{CH}.}   locally compact Hausdorff groups.  If $f:  G_1 \longrightarrow G_2$ is a  Borel group morphism, then it is also continuous.
 \end{lemma}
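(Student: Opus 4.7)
The plan is to prove continuity at the identity $e_1$: since $f$ is a homomorphism, continuity at $e_1$ propagates to all of $G_1$ by left translation, so given an open neighborhood $V \ni e_2$ I must produce an open neighborhood $U \ni e_1$ with $f(U) \subseteq V$. First I choose a symmetric open neighborhood $W$ of $e_2$ satisfying $W W \subseteq V$, which exists by continuity of multiplication in $G_2$.

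Next, using separability of $G_2$, I write $G_2 = \bigcup_{n \in \N} W y_n$ as a countable union of \emph{right} translates of $W$. Pulling back via the Borel map $f$ gives $G_1 = \bigcup_n f^{-1}(W y_n)$, a countable union of Borel sets. Since $G_1$ is locally compact Hausdorff, the Baire category theorem applies and forces at least one set $A := f^{-1}(W y_{n_0})$ to be non-meager. Borel sets in a topological group possess the Baire property, so Pettis's theorem yields an open neighborhood $U$ of $e_1$ contained in $A \cdot A^{-1}$.

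The concluding step is a direct computation: for $g \in U$, write $g = a_1 a_2^{-1}$ with $a_i \in A$ and $f(a_i) = w_i y_{n_0}$, $w_i \in W$. Then
\[
f(g) = f(a_1) f(a_2)^{-1} = w_1 y_{n_0} y_{n_0}^{-1} w_2^{-1} = w_1 w_2^{-1} \in W W^{-1} = W W \subseteq V,
\]
using $W = W^{-1}$. The choice of \emph{right} translates (rather than left) is essential, since it ensures the arbitrary element $y_{n_0}$ cancels cleanly; with left translates one only obtains $f(U) \subseteq y_{n_0} V y_{n_0}^{-1}$, which is insufficient.

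The main obstacle, conceptually, is invoking Pettis's theorem — the assertion that a non-meager set with the Baire property has $A \cdot A^{-1}$ containing a neighborhood of the identity — which I would simply cite from the literature. One may alternatively bypass Pettis by the Steinhaus–Weil lemma in the locally compact setting: a separable locally compact Hausdorff group is $\sigma$-compact, so Haar measure is $\sigma$-finite; by the countable covering above, at least one $f^{-1}(W y_n)$ intersects some open set of finite Haar measure in a set of positive measure, and the Steinhaus–Weil lemma supplies the neighborhood $U$. Either route is standard; the actual content of the lemma lies in the cover-and-cancellation trick above.
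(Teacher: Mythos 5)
Your argument is correct, and your primary route is genuinely different from the paper's. The paper's proof is measure-theoretic: it covers $G_2$ by countably many \emph{left} translates $x_nV_2$ of a symmetric neighborhood $V_2$ (using a countable dense subset of $G_2$), pulls back to a countable Borel cover of $G_1$, uses countable subadditivity of a left Haar measure $\mu_{G_1}$ to find one piece $f^{-1}(x_nV_2)$ of positive measure, extracts by inner regularity a compact $K_1\subseteq f^{-1}(x_nV_2)$ with $0<\mu_{G_1}(K_1)<+\infty$, and invokes the Steinhaus--Weil lemma (cited from Hewitt--Ross and Weil's Appendice) to get that $K_1^{-1}K_1$ is a neighborhood of $1_{G_1}$ mapping into $V_2^{-1}V_2$. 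Your main route replaces measure by category: the Baire category theorem on the locally compact Hausdorff space $G_1$ produces a non-meager piece $A$, and Pettis's theorem for sets with the Baire property gives the neighborhood inside $AA^{-1}$. This buys you independence from Haar measure and from inner regularity of Borel sets (the one slightly delicate citation in the paper's version), at the cost of citing Pettis; structurally the two proofs are parallel (cover, pigeonhole, Steinhaus-type lemma, cancellation). Your second sketch, via $\sigma$-compactness and the measure-theoretic Steinhaus--Weil lemma, is essentially the paper's argument. One small correction: your claim that right translates are \emph{essential} and left translates \emph{insufficient} is overstated. The paper uses left translates $x_nV_2$ but forms $K_1^{-1}K_1$ rather than $K_1K_1^{-1}$, so the cancellation $(x_nV_2)^{-1}(x_nV_2)=V_2^{-1}V_2$ goes through just as cleanly; what matters is only that the side on which the translating element sits match the order in which $A$ and $A^{-1}$ are multiplied.
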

\begin{proof}
  Let $\mu_{G_1}$, $\mu_{G_2}$ be two left Haar measures on $G_1$, $G_2$ respectively.  Let $U_2$ be a non-empty  open neighborhood of the identity element    of $G_2$  with compact closure, so that  $0<\mu_{G_2}(U_2)<+\infty$.   Then $\exists$ an open subset $V_2$ of $G_2$, such that $1_{G_2}\in V_2$, $V_2^{-1}=V_2$, $V_2V_2\subseteq U_2$.  Let $\{x_n\}$ be a countable dense subset of $G_2$. For any $g\in G_2$, $\exists x_k$, such that $x_k\in gV_2$, so $g\in x_kV_2^{-1}=x_kV_2$. Hence $G_2=\cup_{n} x_n V_2$. Then $G_1=f^{-1}(\cup_{n} x_n V_2)=\cup_{n}f^{-1}( x_n V_2)$. Now each  $f^{-1}(  x_n V_2)$ is  a Borel set. Hence $0\neq \mu_{G_1}(G_1)\leq \sum_n \mu_{G_1}(f^{-1}( x_n V_2))$, which implies at least one $\mu_{G_1}(f^{-1}( x_n V_2))>0$.  By the general  regularized property(cf. Prop.7.2.6 in \cite{Co}) of $\mu_{G_1}$, there exists a compact subset $K_1 \subseteq  f^{-1}( x_n V_2)$, such that $0< \mu_{G_1}(K_1)< +\infty$. By \cite[p.296, Coro.]{HM1}, or  \cite[Appendice]{We}, $K_1^{-1}K_1$ contains a neighborhood of $1_{G_1}$. Hence $ f(K_1^{-1}K_1) = f(K_1)^{-1}f(K_1) \subseteq  V_2^{-1}x_n^{-1}x_n V_2=V_2^{-1}V_2 \subseteq U_2$, $f^{-1}(U_2)  \supseteq K_1^{-1}K_1$.  Hence $f$ is continuous.
\end{proof}
 Go back to the case that $G$ is  a second-countable compact Hausdorff group.  Let $\mathbb{T}=\{ x\in \C^{\times}\mid |x|=1\}$ be the cycle group.   Let $\Ha_m^2(G, \T)$ denote one  Moore measurable cohomology group as defined  in \cite{Mo1}. Here  we always choose  a  $2$-cycle $\alpha(-,-)$ such that $\alpha(1, g)=\alpha(g,1)=1$.\footnote{For the general cocycle, by the equality $\alpha(xy, z)\alpha(x,y)=\alpha(x,yz)\alpha(y,z)$, we can get $\alpha(g,1)=\alpha(1, g)=\alpha(1,1)$. To simply the discussion, one can  choose   a normalized cocycle as   given in  \cite{KL}. More precisely, assume now $\alpha(g,1)=1=\alpha(1,g)$. Then $1=\alpha(gg^{-1}, g)=\alpha(g^{-1}, g)/\alpha(g, g^{-1})$. Let us define $\varphi: G \longrightarrow \C^{\times}; g\longmapsto  \sqrt{\alpha(g, g^{-1})}$. Here, we choose a measurable map $\sqrt: \C^{\times} \longrightarrow \C^{\times}$. Consider  $\alpha'(g_1, g_2)= \alpha(g_1,g_2) \varphi(g_1)^{-1}\varphi(g_2)^{-1} \varphi(g_1g_2)$. Then $\alpha'(g, g^{-1})= 1$.} For such cocyle $\alpha$, we can associate to a  group $G^{\alpha}$ of elements $(g, t)$ with $g\in G$, $t\in \T$. The group law is defined by $(g_1, t_1) \cdot (g_2, t_2)=(g_1g_2, \alpha(g_1,g_2) t_1t_2)$.  Then $G^{\alpha}$ is a Borel group, under the product Borel structure of $G\times \T$. Moreover, $G\times \T$ is a Polish  space, and $G^{\alpha}$ is  a Polish group. Following   from the works of  Weil, Mackey,
we know that    $G^{\alpha}$ admits a uniquely locally compact group structure with the above Borel sets.
\begin{definition}\label{thedePro}
 A \emph{unitary $\alpha$-projective representation} $(\pi, V)$ of $G$ is a Borel  map $\pi: G \longrightarrow \mathcal{U}(V)$, for a separable Hilbert  vector space $V$, such that
    $\pi(g_1) \pi(g_2)= \alpha(g_1, g_2) \pi(g_1g_2)$,  for a  $2$-cocycle $\alpha(-,-)$ of some class in the measurable  cohomology $\Ha^2_m(G, \T)$.
  \end{definition}
  For each $\alpha$-projective representation $(\pi, V)$, we can associate a unitary representation $(\pi^{\alpha}, V^{\alpha})$ of $G^{\alpha}$, defined by $ \pi^{\alpha}([g,t]) v=t\pi(g)v$, for $g\in G$, $v\in V^{\alpha}=V$. Note that (1) the two canonical  morphisms $p_1: G^{\alpha} \longrightarrow G$, $\iota: \T  \longrightarrow G^{\alpha}$,  are continuous, and $p_2:   G^{\alpha}\longrightarrow \T $ is measurable, (2) the restriction $\pi^{\alpha}|_{\T}=id_{\T}$. Notice that by \cite[pp.346-347]{HM1}, $\pi^{\alpha}:  G^{\alpha} \longrightarrow \mathcal{U}(V)$ is strongly continuous.  Conversely, every unitary representation $(\rho^{\alpha}, W^{\alpha})$ satisfying $\rho^{\alpha}|_{\T}=id_{\T}$, arises from an $\alpha$-projective representation of $G$ in such form.

   Let $(\pi_i, V_i)$ be two non-zero $\alpha_i$-projective representations of $G$, $i=1,2$.  Call $T$ a $\phi$-projective $G$-intertwining operator or a  $\phi$-projective $G$-morphism,  from $\pi_1$ to  $\pi_2$ if (1)  $T: V_1 \longrightarrow V_2$, is a bounded linear operator, (2) $\phi: G \longrightarrow \C^{\times}$ is a Borel map, (3)  $T\circ \pi_1(g)=\phi(g)\pi_2(g)\circ T$, for any $g\in G$. If $T\neq 0$, $\phi(g)\in \T$.  Let $\Hom^{\phi}_G(V_1, V_2)$, or  $\Hom^{\phi}_G(\pi_1, \pi_2)$ denote all these operators.   If $\phi$ is the trivial map, we call $T$ a linear $G$-morphism.
\begin{lemma}
If $\Hom^{\phi}_G(V_1, V_2)\neq 0$, then
\begin{itemize}
\item[(1)] $\alpha_1$, $\alpha_2$ represent the same class in $\Ha_m^2(G, \T)$,
 \item[(2)] $\Hom^{\phi}_G(\pi_1, \pi_2)$ is a Banach subspace of $\mathcal{B}(V_1, V_2)$.
 \end{itemize}
\end{lemma}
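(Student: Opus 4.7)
For part (1), the strategy is to compute $T\circ \pi_1(g_1)\pi_1(g_2)$ in two ways and compare. Using first the $\alpha_1$-cocycle identity $\pi_1(g_1)\pi_1(g_2)=\alpha_1(g_1,g_2)\pi_1(g_1g_2)$ and then the intertwining relation, one gets
\begin{equation*}
T\circ \pi_1(g_1)\pi_1(g_2)=\alpha_1(g_1,g_2)\phi(g_1g_2)\,\pi_2(g_1g_2)\circ T;
\end{equation*}
applying the intertwining relation twice first and then the $\alpha_2$-cocycle identity gives
\begin{equation*}
T\circ \pi_1(g_1)\pi_1(g_2)=\phi(g_1)\phi(g_2)\alpha_2(g_1,g_2)\,\pi_2(g_1g_2)\circ T.
\end{equation*}
Since $T\neq 0$, I would pick $v\in V_1$ with $Tv\neq 0$; as $\pi_2(g_1g_2)$ is unitary, $\pi_2(g_1g_2)Tv\neq 0$, so the two right-hand sides may be compared on $v$ to yield the pointwise identity
\begin{equation*}
\alpha_1(g_1,g_2)\,\alpha_2(g_1,g_2)^{-1}=\phi(g_1)\phi(g_2)\phi(g_1g_2)^{-1}.
\end{equation*}
This exhibits $\alpha_1\alpha_2^{-1}$ as the coboundary of the Borel cochain $\phi$, giving $[\alpha_1]=[\alpha_2]$ in $\Ha_m^2(G,\T)$.

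For part (2), linearity of $\Hom^{\phi}_G(\pi_1,\pi_2)$ inside $\mathcal{B}(V_1,V_2)$ is immediate since the defining identity is linear in $T$; what remains is closedness under the operator norm. I would fix $g\in G$ and introduce the map $F_g\colon \mathcal{B}(V_1,V_2)\longrightarrow \mathcal{B}(V_1,V_2)$, $S\longmapsto S\circ \pi_1(g)-\phi(g)\pi_2(g)\circ S$; continuity of $F_g$ follows from the continuity of left and right composition by the fixed bounded operators $\pi_1(g)$ and $\phi(g)\pi_2(g)$. Hence each $\Ker F_g$ is closed, and $\Hom^{\phi}_G(\pi_1,\pi_2)=\bigcap_{g\in G}\Ker F_g$ is a closed, and therefore Banach, subspace of $\mathcal{B}(V_1,V_2)$.

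The only genuinely delicate step is the cancellation in part (1): one must be sure that $\pi_2(g_1g_2)\circ T$ is nonzero, which is handled by testing the identity on a single vector $v$ with $Tv\neq 0$ and invoking unitarity of $\pi_2(g_1g_2)$. No measure-theoretic passage to almost-everywhere identities is required, since the intertwining condition defining $\Hom^{\phi}_G$ is demanded for every $g\in G$; the note preceding the lemma already ensures $\phi(g)\in\T$, so $\phi$ is a well-defined Borel $1$-cochain and the resulting equality is a bona fide coboundary relation in Moore's cohomology.
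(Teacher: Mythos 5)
Your proposal is correct and follows essentially the same route as the paper: part (1) is the same two-way evaluation of $T\circ\pi_1(g_1)\pi_1(g_2)$ on a vector $v$ with $Tv\neq 0$, cancelling the invertible operator $\pi_2(g_1g_2)$ to exhibit $\alpha_1\alpha_2^{-1}$ as the coboundary of $\phi$. For part (2), your description of $\Hom^{\phi}_G(\pi_1,\pi_2)$ as $\bigcap_{g}\Ker F_g$ with each $F_g$ continuous is the same closedness argument the paper phrases as the fixed-point set of the continuous $G$-action $T\mapsto T^g=\phi^{-1}(g)\pi_2(g)^{-1}\circ T\circ\pi_1(g)$.
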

\begin{proof}
1) Choose $v\in V_1$, such that $T(v)\neq 0$. For $g_1, g_2\in G$, $\phi(g_1)\phi(g_2) \alpha_2(g_1,g_2)\pi_2(g_1g_2)T(v)=\phi(g_1)\phi(g_2)\pi_2(g_1)\pi_2(g_2)T(v)=\phi(g_1)\pi_2(g_1)T(\pi_1(g_2)v)=T(\pi_1(g_1)\pi_1(g_2)v)=T(\pi_1(g_1g_2) \alpha_1(g_1, g_2)v)=\alpha_1(g_1, g_2)\phi(g_1g_2)\pi_2(g_1g_2)T(v)$, so $\phi(g_1)\phi(g_2)\phi^{-1}(g_1g_2) \alpha_2(g_1,g_2)=\alpha_1(g_1, g_2)$.\\
2) Let us   define a  right action of $G$ on $\mathcal{B}(V_1, V_2)$ by $ T^g=\phi^{-1}(g) \pi_2(g)^{-1}\circ T\circ \pi_1(g)$, for $g\in G$.  It can be checked that $T^{g_1g_2}=(T^{g_1})^{g_2}$, and $\phi(1)=1$. Notice that $T \longrightarrow T^g$, defines a continuous map on $\mathcal{B}(V_1, V_2)$. Then $\Hom^{\phi}_G(\pi_1, \pi_2)$ is the  $G$-stable subspace of $\mathcal{B}(V_1, V_2)$. Hence $\Hom^{\phi}_G(\pi_1, \pi_2)$ is a closed subspace of $\mathcal{B}(V_1, V_2)$.
\end{proof}
\begin{corollary}\label{IdPro}
If  $\Hom^{1}_G(V_1, V_2)\neq 0$, for  the trivial map $1$ from $G$ to $\T$,  then $\alpha_1=\alpha_2$.
\end{corollary}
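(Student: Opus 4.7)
The plan is to specialize the identity established in the proof of the preceding Lemma. Recall that in part (1) of that lemma, the author takes any nonzero $\phi$-projective intertwiner $T$, picks a vector $v\in V_1$ with $T(v)\neq 0$, and computes $T(\pi_1(g_1)\pi_1(g_2)v)$ in two ways — expanding via the $\alpha_1$-cocycle relation on $\pi_1$, and via the $\phi$-projective intertwining property pushed through to $\pi_2$ (which invokes the $\alpha_2$-cocycle relation). Comparing the two expressions and cancelling the nonzero vector $\pi_2(g_1g_2)T(v)$ yields the pointwise identity
\[
\phi(g_1)\phi(g_2)\,\phi^{-1}(g_1g_2)\,\alpha_2(g_1,g_2)=\alpha_1(g_1,g_2)
\]
for all $g_1,g_2\in G$.

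For the corollary, I would simply hypothesize a nonzero operator $T\in\Hom^{1}_G(V_1,V_2)$, where $\phi$ is taken to be the constant function $1$. Substituting $\phi(g)\equiv 1$ into the above identity collapses the coboundary factor $\phi(g_1)\phi(g_2)\phi^{-1}(g_1g_2)$ to $1$, and we obtain $\alpha_2(g_1,g_2)=\alpha_1(g_1,g_2)$ for every pair $(g_1,g_2)\in G\times G$. This is precisely equality of cocycles, not merely cohomologous equivalence, so the conclusion $\alpha_1=\alpha_2$ follows.

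There is essentially no obstacle here: the corollary is a direct specialization of the formula obtained in the previous lemma, and the only thing worth making explicit is that we are recording the stronger conclusion (equality on the nose) rather than just equality of cohomology classes in $\Ha_m^2(G,\T)$, which was the statement of part (1) of the lemma. I would therefore keep the proof to one or two lines, citing the identity from the proof above and specializing $\phi$ to the constant function $1$.
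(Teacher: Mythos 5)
Your proposal is correct and matches the paper's intent exactly: the corollary is stated without a separate proof precisely because it is the specialization $\phi\equiv 1$ of the identity $\phi(g_1)\phi(g_2)\phi^{-1}(g_1g_2)\,\alpha_2(g_1,g_2)=\alpha_1(g_1,g_2)$ derived in part (1) of the preceding lemma, which immediately gives equality of cocycles on the nose. Your remark that this is genuinely stronger than cohomologous equivalence is a fair and accurate observation.
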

\begin{corollary}
If $\alpha_1=\alpha_2$, then  $\Hom^{\phi}_G(V_1, V_2)= 0$ unless $\phi$ is a character from $G$ to $\T$. In particular, $\pi_1=\pi_2$, the class of all projective $G$-morphisms $\End^{\mathcal{P}}_G(V_1)=\cup_{\phi} \End^{\phi}_G(V_1)$, as $\phi$ runs through all characters of $G$.
\end{corollary}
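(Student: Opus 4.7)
The plan is to read off the corollary directly from the preceding lemma combined with the Borel-to-continuous upgrade in Lemma \ref{mor}.

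First I would assume $\Hom^{\phi}_G(V_1, V_2)\neq 0$ and pick a nonzero $T$ in it. Part (1) of the preceding lemma gives, for every $g_1, g_2 \in G$, the identity
\[
\phi(g_1)\phi(g_2)\phi^{-1}(g_1g_2)\,\alpha_2(g_1,g_2)=\alpha_1(g_1,g_2).
\]
Under the hypothesis $\alpha_1=\alpha_2$ (equality of the chosen normalized cocycles, not merely cohomologous), the $\alpha$'s cancel and I obtain $\phi(g_1g_2)=\phi(g_1)\phi(g_2)$, so $\phi$ is a group homomorphism $G\to \C^\times$. The paragraph introducing $\phi$-projective morphisms already noted that $\phi(g)\in\T$ whenever $T\neq 0$, so in fact $\phi: G\to \T$.

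Next I have to check that $\phi$ is a character in the topological sense, i.e.\ continuous. By the definition preceding the lemma, $\phi$ is a Borel map, and $G$ and $\T$ are second-countable (hence separable) locally compact Hausdorff groups. Applying Lemma \ref{mor} to the Borel group morphism $\phi$ upgrades it to a continuous character of $G$, which finishes the first assertion. Contrapositively, if $\phi$ is not a character then $\Hom^{\phi}_G(V_1,V_2)=0$.

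For the ``in particular'' part, I specialize to $\pi_1=\pi_2=\pi$ (so automatically $\alpha_1=\alpha_2$) and just reorganize: by definition $\End^{\mathcal{P}}_G(V_1)=\bigcup_{\phi\colon G\to\C^\times \text{ Borel}} \End^{\phi}_G(V_1)$, but the first statement kills every summand whose $\phi$ is not a character, so the union collapses to
\[
\End^{\mathcal{P}}_G(V_1)=\bigcup_{\phi\in \Hom(G,\T)} \End^{\phi}_G(V_1),
\]
as claimed. The only non-routine point is the implicit reliance on Lemma \ref{mor}; everything else is algebraic manipulation of the cocycle identity from the lemma just proved.
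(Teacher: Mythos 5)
Your proposal is correct and follows exactly the route the paper intends (the corollary is stated without a separate proof precisely because it is read off from part (1) of the preceding lemma): setting $\alpha_1=\alpha_2$ in the identity $\phi(g_1)\phi(g_2)\phi^{-1}(g_1g_2)\alpha_2(g_1,g_2)=\alpha_1(g_1,g_2)$ makes $\phi$ a homomorphism into $\T$, and Lemma \ref{mor} — which is evidently included in the paper for this purpose — upgrades the Borel homomorphism to a continuous character. The handling of the ``in particular'' clause by collapsing the union over Borel $\phi$ to a union over characters is likewise the intended reading.
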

\begin{corollary}\label{ISOP}
If $\alpha_1=\alpha_2$, $\Hom^{1}_G(V_1, V_2)\simeq \Hom_{G^{\alpha_1}}(V^{\alpha_1}_1, V^{\alpha_1}_2)$.
\end{corollary}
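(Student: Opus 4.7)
The strategy is to view both $\Hom^{1}_G(V_1, V_2)$ and $\Hom_{G^{\alpha_1}}(V^{\alpha_1}_1, V^{\alpha_1}_2)$ as subsets of the common ambient space $\mathcal{B}(V_1, V_2)$, since by construction $V_i^{\alpha_1} = V_i$ as Hilbert spaces, and then to show they coincide by directly unwinding the definition $\pi_i^{\alpha_1}([g,t])v = t\,\pi_i(g)v$.

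First I take $T \in \Hom^{1}_G(V_1, V_2)$, so $T \circ \pi_1(g) = \pi_2(g) \circ T$ for every $g \in G$. For an arbitrary $[g, t] \in G^{\alpha_1}$ and $v \in V_1$, the one-line check
\[
T\bigl(\pi_1^{\alpha_1}([g,t])v\bigr) = T\bigl(t\,\pi_1(g)v\bigr) = t\,\pi_2(g)T(v) = \pi_2^{\alpha_1}([g,t])T(v)
\]
places $T$ inside $\Hom_{G^{\alpha_1}}(V_1^{\alpha_1}, V_2^{\alpha_1})$. Conversely, given a $G^{\alpha_1}$-intertwiner $T$, I specialize the intertwining identity to the elements $[g, 1]$; since $\pi_i^{\alpha_1}([g,1]) = \pi_i(g)$, this recovers $T\circ \pi_1(g) = \pi_2(g)\circ T$, so $T \in \Hom^{1}_G(V_1, V_2)$.

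Thus the two spaces agree as subsets of $\mathcal{B}(V_1, V_2)$. By the preceding lemma they are both closed Banach subspaces of $\mathcal{B}(V_1,V_2)$, so the identity map on $\mathcal{B}(V_1, V_2)$ furnishes an isometric isomorphism between them, which is the asserted statement. There is no genuine obstacle here; the only conceptual point worth flagging is that the condition $\phi \equiv 1$ is precisely what allows the scalar factor $t$ to pass through $T$ unchanged, so that a \emph{linear} (rather than projective) $G$-intertwining corresponds to an \emph{ordinary} intertwining on the central extension $G^{\alpha_1}$. A nontrivial $\phi$ would, by contrast, yield a projective intertwining on $G^{\alpha_1}$ with character $\phi \circ p_1$, which is why the statement is formulated with the trivial map.
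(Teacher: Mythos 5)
Your proof is correct and matches the paper's approach: the paper leaves this corollary without an explicit argument but proves the more general statement $\Hom_G^{\phi}(\pi_1,\pi_2)\simeq \Hom_{G^{\alpha_1}}(\pi_1^{\alpha_1},\pi_2^{\alpha_2}\circ\iota_{\phi})$ immediately afterwards by exactly the same observation, namely that the identity map $T\longmapsto T$ on $\mathcal{B}(V_1,V_2)$ gives the bijection. Your explicit unwinding of $\pi_i^{\alpha_1}([g,t])v=t\,\pi_i(g)v$ in both directions is just a spelled-out version of that one-line proof.
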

For the general case, assume $\alpha_1(g_1, g_2)=\phi(g_1)\phi(g_2)\phi^{-1}(g_1g_2) \alpha_2(g_1, g_2)$, for $g_i\in G$.  Then there exists a continuous  group isomorphism
$\iota_{\phi}: G^{\alpha_1} \longrightarrow G^{\alpha_2}; [g, t] \longrightarrow [g, \phi(g) t]$.  Let $\pi_2^{\alpha_2}\circ \iota_{\phi}$ be the  inflation representation of $ G^{\alpha_1}$ from $\pi_2^{\alpha_2}$ by $\iota_{\phi}$.  For such $\pi_i$, we have:
\begin{lemma}
$\Hom_G^{\phi}(\pi_1, \pi_2) \simeq \Hom_{G^{\alpha_1}}(\pi_1^{\alpha_1}, \pi_2^{\alpha_2} \circ \iota_{\phi})$, as Banach spaces.
\end{lemma}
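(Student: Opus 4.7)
The plan is to show that the two Hom spaces coincide \emph{as subspaces of} $\mathcal{B}(V_1,V_2)$, so the identification is literally the identity map on operators; the Banach space statement is then automatic since $V_i^{\alpha_i}=V_i$ as Hilbert spaces and both Hom spaces carry the restricted operator norm from $\mathcal{B}(V_1,V_2)$.

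First I would take $T\in \Hom_G^{\phi}(\pi_1,\pi_2)$ and set $\tilde T=T$, viewed now as a map $V_1^{\alpha_1}\to V_2^{\alpha_2}$. The only thing to check is the intertwining condition with respect to the $G^{\alpha_1}$-actions on both sides. Using the explicit formulas $\pi_1^{\alpha_1}([g,t])v=t\pi_1(g)v$ and $(\pi_2^{\alpha_2}\circ \iota_\phi)([g,t])v=\pi_2^{\alpha_2}([g,\phi(g)t])v=\phi(g)t\,\pi_2(g)v$, the computation
\[
\tilde T\bigl(\pi_1^{\alpha_1}([g,t])v\bigr)=t\,T(\pi_1(g)v)=t\,\phi(g)\pi_2(g)Tv=(\pi_2^{\alpha_2}\circ \iota_\phi)([g,t])\tilde T v
\]
is a direct rearrangement of the defining relation $T\circ\pi_1(g)=\phi(g)\pi_2(g)\circ T$. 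So $\tilde T$ is an honest $G^{\alpha_1}$-morphism.

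Conversely, given $S\in\Hom_{G^{\alpha_1}}(\pi_1^{\alpha_1},\pi_2^{\alpha_2}\circ\iota_\phi)$, I would specialize the intertwining identity to $t=1$, which gives back exactly $S\circ \pi_1(g)=\phi(g)\pi_2(g)\circ S$; this places $S$ in $\Hom_G^\phi(\pi_1,\pi_2)$. (Borel measurability of $g\mapsto \pi_i(g)$ together with strong continuity of $\pi_i^{\alpha_i}$, guaranteed by the discussion after Definition \ref{thedePro}, ensures there are no measurability issues in going back and forth.) Hence the two spaces are equal as subsets of $\mathcal{B}(V_1,V_2)$.

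Finally, since both Hom spaces inherit their Banach norm from the operator norm on $\mathcal{B}(V_1,V_2)$ and the identification is the set-theoretic identity on operators, it is an isometric linear isomorphism, hence a Banach space isomorphism. There is no real obstacle here; the content is purely the bookkeeping that absorbing the cocycle shift $\phi$ into the $\T$-coordinate by the continuous isomorphism $\iota_\phi$ converts the projective intertwining condition into an ordinary one, so the argument is essentially a one-line verification.
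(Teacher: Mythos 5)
Your proposal is correct and is exactly the paper's argument: the paper's entire proof is ``The identity map $T \longrightarrow T$ gives a bijection,'' and your computation simply spells out the verification that absorbing $\phi$ into the $\T$-coordinate via $\iota_{\phi}$ converts the $\phi$-projective intertwining relation into the ordinary one. Nothing further is needed.
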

\begin{proof}
The identity map $T \longrightarrow T$,  gives a bijection.
\end{proof}

 \begin{definition}
 \begin{itemize}
 \item[(1)]  Call $\pi_1$  projectively equivalent to $\pi_2$,  if there exists a bijective  unitary  $G$-morphism $T\in \Hom^{\phi}_G(V_1, V_2)$, for some $\phi$.
 \item[(2)] Call $\pi_1$  linearly  equivalent to $\pi_2$,  if there exists a bijective  unitary  $G$-morphism $T\in \Hom^{1}_G(V_1, V_2)$.
 \end{itemize}
 \end{definition}
 \begin{lemma}\label{isom}
Let $(\pi_i, V_i)$  be  irreducible $\alpha_i$-projective representations of $G$ of finite dimension, $i=1, 2$.  If $\Hom_G^1(\pi_1\otimes \pi_2, \C)\neq 0$, then $\pi_1$ is linearly isomorphic to $\overline{\pi_2}$.
\end{lemma}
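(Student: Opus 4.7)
The plan is to extract from the hypothesis a nonzero linear $G$-intertwiner $S\colon V_1 \to \overline{V_2}$ and then apply Schur's lemma at the level of the central extension $G^{\alpha_1}$. First, I would pin down the cocycles. The tensor product $\pi_1\otimes\pi_2$ is an $\alpha_1\alpha_2$-projective representation and the trivial representation on $\C$ is $1$-projective, so Corollary \ref{IdPro} applied to a nonzero $T\in\Hom_G^1(\pi_1\otimes\pi_2,\C)$ forces $\alpha_1\alpha_2=1$. Since $|\alpha_2(g_1,g_2)|=1$ for all $g_1,g_2$, the identity
\[ \overline{\pi_2}(g_1)\overline{\pi_2}(g_2)=\overline{\alpha_2(g_1,g_2)}\,\overline{\pi_2}(g_1g_2)=\alpha_2(g_1,g_2)^{-1}\overline{\pi_2}(g_1g_2) \]
exhibits $\overline{\pi_2}$ as an $\alpha_2^{-1}$-projective representation, and $\alpha_2^{-1}=\alpha_1$, so $\pi_1$ and $\overline{\pi_2}$ carry literally the same cocycle.

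Second, I would convert $T$ into a linear map $S\colon V_1\to\overline{V_2}$. Setting $B(v_1,v_2)=T(v_1\otimes v_2)$ yields a bilinear form satisfying $B(\pi_1(g)v_1,\pi_2(g)v_2)=B(v_1,v_2)$. For each fixed $v_1$, the map $v_2\mapsto B(v_1,v_2)$ is linear on the Hilbert space $V_2$, so Riesz representation produces a unique $S(v_1)\in V_2$ with $B(v_1,v_2)=\langle v_2,S(v_1)\rangle$. Because the inner product is conjugate-linear in its second slot, $v_1\mapsto S(v_1)$ is antilinear as a map $V_1\to V_2$, hence linear as $S\colon V_1\to\overline{V_2}$. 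Substituting $v_2\mapsto\pi_2(g)^{-1}v_2$ in the invariance of $B$ and using $\pi_2(g)^{-1}=\pi_2(g)^{\ast}$ (unitarity) gives
\[ \langle v_2,S(\pi_1(g)v_1)\rangle=\langle \pi_2(g)^{-1}v_2,S(v_1)\rangle=\langle v_2,\pi_2(g)S(v_1)\rangle, \]
so $S\circ\pi_1(g)=\overline{\pi_2}(g)\circ S$; thus $S\in\Hom_G^1(\pi_1,\overline{\pi_2})$, and $S\neq 0$ because $T\neq 0$.

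Finally, since $\overline{\pi_2}$ is irreducible (complex conjugation preserves irreducibility) and shares its cocycle with $\pi_1$, Corollary \ref{ISOP} identifies $\Hom_G^1(\pi_1,\overline{\pi_2})$ with $\Hom_{G^{\alpha_1}}(V_1^{\alpha_1},\overline{V_2}^{\alpha_1})$, a space of intertwiners between two finite-dimensional irreducible linear representations of $G^{\alpha_1}$. Schur's lemma forces the nonzero $S$ to be a linear isomorphism. To upgrade to the unitary equivalence demanded by the definition of linear equivalence, I would observe that $S^{\ast}S\in\End_G^1(V_1)$ is positive and hence, by Schur applied to $\pi_1^{\alpha_1}$, equals a positive scalar $\lambda\cdot\Id$; then $\lambda^{-1/2}S$ is a bijective unitary linear $G$-morphism from $\pi_1$ to $\overline{\pi_2}$. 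The main delicate point, to my mind, is the cocycle accounting in the first step — in particular verifying that the Riesz identification $V_2^{\ast}\cong\overline{V_2}$ transports the induced action on $V_2^{\ast}$ to the complex-conjugate action on $\overline{V_2}$ in a way consistent with $\alpha_2^{-1}=\alpha_1$.
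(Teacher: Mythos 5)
Your proposal is correct and follows essentially the same route as the paper: use Corollary \ref{IdPro} to get $\alpha_1\alpha_2=1$, identify $\Hom_G^1(\pi_1\otimes\pi_2,\C)$ with $\Hom_G^1(\pi_1,\overline{\pi_2})$, pass to $G^{\alpha_1}$ via Corollary \ref{ISOP}, and conclude by Schur's lemma. You merely make explicit two steps the paper leaves implicit — the Riesz-representation construction of $S$ realizing the first isomorphism, and the rescaling of $S$ via $S^{\ast}S=\lambda\,\Id$ to obtain a genuinely unitary equivalence — both of which are carried out correctly.
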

\begin{proof}
By Lmm.\ref{IdPro}, $\alpha_2=\overline{\alpha_1}$. Then $\Hom_G^1(\pi_1\otimes \pi_2, \C)\simeq \Hom_G^1(\pi_1, \overline{\pi_2})\simeq \Hom_{G^{\alpha_1}}(\pi^{\alpha_1}_1, \overline{\pi_2}^{\alpha_1})$. The result then follows from Schur's Lemma.
\end{proof}
  Let $H$ be a closed subgroup of $G$. Let $(\sigma, W)$ be an $\omega$-projective representation of $G$, for some $2$-cocyle $\omega$ representing some class in $\Ha_m^2(H, \T)$.  Let $\Omega$ be a  $2$-cocyle representing    some class in $\Ha_m^2(G, \T)$, such that $\Omega|_{H\times H}=\omega$. Let $(\sigma^{\omega}, W^{\omega})$ be the ordinary  representation of $H^{\omega}$ arising  from   $(\sigma, W)$. We can define the induced representation  $(\Ind_{H^{\omega}}^{G^{\Omega}} \sigma^{\omega}, \Ind_{H^{\omega}}^{G^{\Omega}} W^{\omega})$. The restriction of it to  $G$ defines an $\Omega$-projective representation of $G$, called the projective induced  representation from $(\sigma^{\omega}, W^{\omega})$, denoted by $(\pi=\Ind_{H, \omega}^{G, \Omega} \sigma, V=\Ind_{H, \omega}^{G,\Omega} W)$.
\subsubsection{Mackey-Clifford's theory} Mackey-Clifford's theory is a vast subject. Here we shall  only  present the related results for later use by   following  \cite[\S11C]{CuRe}, \cite[Chap.5]{Is}, \cite{KT}, \cite{Ma3}, \cite{Ri}.  Let $H$ be a normal closed  subgroup of $G$.
\begin{theorem}[Clifford-Mackey]\label{cliffordadmissible1}
Let $(\pi, V) \in \widehat{G}$.  Then:
\begin{itemize}
\item[(1)] $\mathcal{R}_H(\pi) $ is a non-empty   finite set.
\item[(2)] If $\sigma_1, \sigma_2 \in \mathcal{R}_H(\pi)$, then there is an element $g\in G$ such that $\sigma_2 \simeq \sigma_1^g$, where $\sigma_1^g (h): =\sigma_1(ghg^{-1})$ for $h\in H$.
\item[(3)] There is a positive integer $m$ such that $\Res_H^G\pi\simeq \sum_{\sigma\in \mathcal{R}_H(\pi)} m \sigma$.
\item[(4)] Let $(\sigma, W) $ be an irreducible  constituent  of $(\Res_H^G\pi, \Res_H^G V)$. Then:
 \begin{itemize}
\item[(a)] $I_G(\sigma)=\left\{ g\in G \mid \sigma^g \simeq \sigma\right\}$ is an open subgroup of $G$.
\item[(b)] The isotypic component $m\sigma$ of $\sigma$ in $\Res_H^G \pi$ is an irreducible  representation of $I_G(\sigma)$, denoted by $(\widetilde{\sigma}, \widetilde{W})$.
\item[(c)] $\Res_{H}^G\pi= \oplus_{\sigma \in \mathcal{R}_{H}(\pi)} \widetilde{\sigma}$ with $\widetilde{\sigma}|_H \simeq m \sigma$.
\end{itemize}
\item[(5)]  The action of   $G$ on the set $\mathcal{R}_{H}(\pi)$ is transitive. Moreover, $\#\mathcal{R}_H(\pi) =[G: I_G(\sigma)]$.
\item[(6)] $\pi\simeq \Ind_{I_{G}(\sigma)}^G \widetilde{\sigma}$, for any $\sigma \in \mathcal{R}_{H}(\pi)$.

\end{itemize}
\end{theorem}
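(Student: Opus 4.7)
The plan is to build everything on top of the Peter–Weyl decomposition of $\Res_H^G \pi$ together with the continuity of $\pi$ and the irreducibility of $V$. Since $H$ is a compact Hausdorff group acting on the separable Hilbert space $V$, I have the Hilbert direct sum decomposition $V = \widehat{\bigoplus}_{\tau \in \widehat{H}} V_\tau$ into $H$-isotypic components. A direct computation using $\pi(h)\pi(g)v = \pi(g)\pi(g^{-1}hg)v$ yields $\pi(g) V_\tau = V_{\tau^{g^{-1}}}$, so $G$ permutes the summands compatibly with the $G$-action $\tau \mapsto \tau^g$ on $\widehat{H}$; in particular $\mathcal{R}_H(\pi) = \{\tau \in \widehat{H} : V_\tau \neq 0\}$ is $G$-stable, and $I_G(\sigma) = \{g \in G : \pi(g) V_\sigma = V_\sigma\}$ for any $\sigma \in \mathcal{R}_H(\pi)$.

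The main obstacle is the openness of $I_G(\sigma)$ in (4a); once this is secured, everything else falls out. I would fix a nonzero $v \in V_\sigma$ and suppose $g_n \to g$ with $g \in I_G(\sigma)$. By strong continuity of $\pi$, $\pi(g_n) v \to \pi(g) v$ in norm, and $\pi(g) v \in V_\sigma$. If infinitely many of the $g_n$ were to lie outside $I_G(\sigma)$, then each such $\pi(g_n) v$ would lie in $V_{\sigma^{g_n^{-1}}}$ with $\sigma^{g_n^{-1}} \neq \sigma$, hence in a subspace orthogonal to $V_\sigma$, giving $P_\sigma \pi(g_n) v = 0$. Boundedness of the orthogonal projection $P_\sigma$ then forces $\pi(g) v = P_\sigma \pi(g) v = 0$, contradicting $\|\pi(g) v\| = \|v\| > 0$. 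Thus $I_G(\sigma)$ is open, and since $G$ is compact, $G/I_G(\sigma)$ is finite.

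With openness in hand, the remaining assertions unfold quickly. The closed $G$-invariant subspace $\widehat{\bigoplus}_{\tau \in G \cdot \sigma} V_\tau$ is nonzero, so by irreducibility of $\pi$ it equals $V$; this identifies $\mathcal{R}_H(\pi) = G \cdot \sigma$ and gives (1), (2), and the orbit-size formula in (5). The unitary $\pi(g) : V_\sigma \to V_{\sigma^{g^{-1}}}$ transports the $H$-structure, so $\sigma^{g^{-1}}$ occurs in $\Res_H^G \pi$ with the same multiplicity $m$ as $\sigma$, which gives (3). For the irreducibility of $\widetilde{\sigma} := V_\sigma$ as an $I_G(\sigma)$-module in (4b), given a closed $I_G(\sigma)$-invariant $U \subseteq V_\sigma$, the \emph{finite} sum $\sum_{g \in G/I_G(\sigma)} \pi(g) U$ is closed, $G$-invariant, and hence either $0$ or $V$; applying $P_\sigma$, which annihilates $\pi(g) U \subseteq V_{\sigma^{g^{-1}}}$ for $g \notin I_G(\sigma)$, recovers $U = 0$ or $U = V_\sigma$. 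Finally, for (6), the decomposition $V = \bigoplus_{gI_G(\sigma) \in G/I_G(\sigma)} \pi(g) V_\sigma$ is a finite orthogonal sum which $G$ permutes transitively with $\Stab V_\sigma = I_G(\sigma)$ and $V_\sigma$ carrying $\widetilde{\sigma}$ — exactly the defining structure of $\Ind_{I_G(\sigma)}^G \widetilde{\sigma}$, yielding the desired isomorphism.
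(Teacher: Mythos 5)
Your proof is essentially correct, but there is nothing in the paper to compare it against: Theorem \ref{cliffordadmissible1} is quoted without proof from the literature (\cite[\S 11C]{CuRe}, \cite[Chap.5]{Is}, \cite{KT}, \cite{Ma3}). What you have written is a self-contained version of the standard argument from those sources, resting on the $H$-isotypic decomposition $V=\widehat{\oplus}_{\tau}V_{\tau}$, the identity $\pi(g)V_{\tau}=V_{\tau^{g^{-1}}}$, and the imprimitivity description of a finitely induced representation; all the main steps (the projection trick for openness, the orbit-sum argument for transitivity, the $P_{\sigma}$-argument for irreducibility of $\widetilde{\sigma}$) are sound. Two details should be made explicit. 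First, your openness argument shows only that the complement of $I_G(\sigma)$ is \emph{sequentially} closed; this suffices because $G$ is assumed second countable, hence metrizable, but that hypothesis should be invoked (or sequences replaced by nets). Second, you never justify that the multiplicity $m$ in part (3) is a positive \emph{integer} rather than $+\infty$: this follows from the fact that every irreducible unitary representation of a compact group is finite dimensional (Peter--Weyl), and without that remark the phrase ``positive integer $m$'' is unproved. With those two sentences added, the argument is complete and matches the classical route the cited references take.
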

By  \cite[p.298, Thm.8.2]{Ma3}, $(\sigma, W)$ can extend to be an $\alpha$-projective representation of $I_{G}(\sigma)$, for some $2$-cocyle $\alpha(-,-)$ from $\frac{I_{G}(\sigma)}{H} \times \frac{I_{G}(\sigma)}{H} $ to $\T$.
Let $\mathcal{P}Irr(\frac{I_{G}(\sigma)}{H})$ (resp. $\mathcal{P}Irr_{\alpha^{-1}}(\frac{I_{G}(\sigma)}{H})$)  denote the set of \emph{linear} equivalence classes of irreducible unitary projective (resp. $\alpha^{-1}$-projective) representations of $\frac{I_{G}(\sigma)}{H}$.
\begin{theorem}[Clifford-Mackey]\label{cliffordadmissible2}
\begin{itemize}
\item[(1)] If $\widetilde{\sigma}_1 \in \mathcal{R}_{I_{G}(\sigma)}(\Ind_H^{I_{G}(\sigma)} \sigma)$, then
\begin{itemize}
\item[(a)] $\widetilde{\sigma}_1|_{H} \simeq m_1 \sigma$, for some positive integer $m_1$,
\item[(b)] $\pi_1=\Ind_{I_{G}(\sigma)}^G\widetilde{\sigma}_1$, which  is an irreducible representation of $G$,
\item[(c)] $\pi_1 \in \mathcal{R}_G(\Ind_H^G \sigma)$,
\item[(d)] $m_G (\pi_1, \Ind_H^G\sigma) =m_H(\widetilde{\sigma}_1, \sigma)=m_1$.
\end{itemize}
\item[(2)] $\Ind_{I_{G}(\sigma)}^G: \mathcal{R}_{I_{G}(\sigma)}(\Ind_H^{I_{G}(\sigma)} \sigma) \longrightarrow \mathcal{R}_G(\Ind_H^G \sigma);  \widetilde{\sigma}_i \longmapsto \Ind_{I_{G}(\sigma)}^G   \widetilde{\sigma}_i$, is a bijective map.
 \item[(3)] For any $\widetilde{\sigma}_i \in \mathcal{R}_{I_{G}(\sigma)}(\Ind_H^{I_{G}(\sigma)} \sigma)$, there exists a $\delta_i \in \mathcal{P}Irr_{\alpha^{-1}}(\frac{I_{G}(\sigma)}{H})$, such that $\sigma \otimes \delta_i \simeq \widetilde{\sigma}_i $, as ordinary representations.
\item[(4)]  $\sigma\otimes:  \mathcal{P}Irr_{\alpha^{-1}}(\frac{I_{G}(\sigma)}{H}) \longrightarrow \mathcal{R}_{I_{G}(\sigma)}(\Ind_H^{I_{G}(\sigma)} \sigma); \delta_i \longmapsto \sigma\otimes \delta_i$, is a well-defined bijective map.
\end{itemize}
\end{theorem}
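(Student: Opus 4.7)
The plan is to apply Theorem \ref{cliffordadmissible1} twice: once to the pair $(H, I_G(\sigma))$ to analyze the $\widetilde{\sigma}_i$, and once to the pair $(H, G)$ to identify their inductions to $G$, combined throughout with Frobenius reciprocity (Theorem \ref{KT}) and the Mackey restriction--induction formula. The projective extension underlying parts (3)--(4) will rest on \cite[Thm.\,8.2]{Ma3}, which is already invoked in the statement.

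For (1)(a), since $\widetilde{\sigma}_1 \in \mathcal{R}_{I_G(\sigma)}(\Ind_H^{I_G(\sigma)}\sigma)$, Frobenius gives $\sigma \in \mathcal{R}_H(\widetilde{\sigma}_1)$; applying Theorem \ref{cliffordadmissible1}(3),(5) to the pair $(H, I_G(\sigma))$, the $I_G(\sigma)$-orbit of $\sigma$ in $\widehat{H}$ is the singleton $\{\sigma\}$, so $\Res_H^{I_G(\sigma)} \widetilde{\sigma}_1 \simeq m_1\sigma$ for some $m_1 \geq 1$. For (c), Mackey's restriction--induction formula (with $H \triangleleft G$) yields
\[
\Res_H^G \pi_1 \simeq \bigoplus_{g \in G / I_G(\sigma)} \Res_H^{I_G(\sigma)} (\widetilde{\sigma}_1)^g \simeq \bigoplus_{g} m_1 \sigma^g,
\]
so $\sigma \in \mathcal{R}_H(\pi_1)$ and hence $\pi_1 \in \mathcal{R}_G(\Ind_H^G\sigma)$; for (d), the same decomposition together with $\sigma^g \not\simeq \sigma$ for $g \notin I_G(\sigma)$ gives $m_G(\pi_1, \Ind_H^G\sigma) = m_H(\Res_H^G \pi_1, \sigma) = m_1$. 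For (1)(b), I apply Mackey's irreducibility criterion: irreducibility of $\Ind_{I_G(\sigma)}^G \widetilde{\sigma}_1$ reduces to showing that for $g \notin I_G(\sigma)$ the $H$-representations $\Res_H^{I_G(\sigma)}\widetilde{\sigma}_1$ and $\Res_H^{I_G(\sigma)}(\widetilde{\sigma}_1)^g$ are disjoint, which is immediate since they are $\sigma$-isotypic and $\sigma^g$-isotypic respectively.

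Part (2). For surjectivity, given $\pi \in \mathcal{R}_G(\Ind_H^G\sigma)$, Frobenius places $\sigma$ in $\mathcal{R}_H(\pi)$; Theorem \ref{cliffordadmissible1}(4)(b),(6) exhibits $\pi = \Ind_{I_G(\sigma)}^G \widetilde{\sigma}$ for the irreducible $\sigma$-isotypic component $\widetilde{\sigma}$ of $\Res_H^G\pi$, and (1)(c) places $\widetilde{\sigma}$ in $\mathcal{R}_{I_G(\sigma)}(\Ind_H^{I_G(\sigma)}\sigma)$. Injectivity: if $\Ind_{I_G(\sigma)}^G \widetilde{\sigma}_1 \simeq \Ind_{I_G(\sigma)}^G \widetilde{\sigma}_2 = \pi$, the $\sigma$-isotypic component of $\Res_H^G\pi$ is canonically an irreducible $I_G(\sigma)$-representation by Theorem \ref{cliffordadmissible1}(4)(b), and the decomposition above forces $\widetilde{\sigma}_1 \simeq \widetilde{\sigma}_2$.

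For (3), fix once and for all an $\alpha$-projective extension $\widetilde{\sigma}_0 : I_G(\sigma) \to \mathcal{U}(W)$ of $\sigma$ from \cite[Thm.\,8.2]{Ma3}, and given $\widetilde{\sigma}_i$ on $\widetilde{W}_i$ set $E_i := \Hom_H(W, \widetilde{W}_i)$. By Schur's lemma $\dim E_i = m_i$, and the formula $\delta_i(g)(T) := \widetilde{\sigma}_i(g) \circ T \circ \widetilde{\sigma}_0(g)^{-1}$ defines an $\alpha^{-1}$-projective representation of $I_G(\sigma)$ that is trivial on $H$ (since $\widetilde{\sigma}_0|_H = \sigma$), hence descends to $I_G(\sigma)/H$; the evaluation map $W \otimes E_i \to \widetilde{W}_i$ is $I_G(\sigma)$-equivariant and, by Schur applied to $\widetilde{\sigma}_i|_H \simeq m_i \sigma$, an isomorphism, yielding $\widetilde{\sigma}_i \simeq \sigma \otimes \delta_i$ as ordinary representations, with $\delta_i$ irreducible since $\widetilde{\sigma}_i$ is. Part (4) is then a formalization of this construction: the $\alpha$ and $\alpha^{-1}$ cocycles cancel in $\sigma \otimes \delta$, restriction to $H$ equals $\dim(\delta)\cdot\sigma$, and the functors $\delta \mapsto \sigma \otimes \delta$ and $\widetilde{\sigma}_i \mapsto \Hom_H(W, \widetilde{W}_i)$ are mutually inverse bijections. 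The main obstacle I expect is the careful transfer of Mackey's irreducibility criterion and the $\sigma\otimes\delta_i$ identification from the purely algebraic to the Hilbert-module setting—particularly verifying that the evaluation map is a unitary equivalence—and the cocycle bookkeeping in (3), where one must use $\widetilde{\sigma}_0(gh) = \alpha(g,h)\widetilde{\sigma}_0(g)\widetilde{\sigma}_0(h)$ at a key step to ensure the conjugation action on $E_i$ is genuinely $\alpha^{-1}$-projective.
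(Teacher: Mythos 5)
The paper offers no proof of this theorem: it is presented as quoted background, "following \cite[\S11C]{CuRe}, \cite[Chap.5]{Is}, \cite{KT}, \cite{Ma3}, \cite{Ri}", so there is no in-paper argument to compare yours against. Your proposal reconstructs the standard proof from exactly those sources, and it is essentially correct: part (1) via Frobenius reciprocity plus Theorem \ref{cliffordadmissible1}(2),(3) applied to the pair $(H, I_G(\sigma))$ (where the $I_G(\sigma)$-orbit of $\sigma$ is a singleton), the Mackey restriction--induction decomposition over $G/I_G(\sigma)$, and the irreducibility criterion with disjointness already visible on $H$ because $\sigma^g\not\simeq\sigma$ off the stabilizer; part (2) by matching $\widetilde{\sigma}$ with the $\sigma$-isotypic component of $\Res_H^G\pi$ from Theorem \ref{cliffordadmissible1}(4)(b),(6); parts (3)--(4) by the intertwining-space construction $E_i=\Hom_H(W,\widetilde{W}_i)$ with the conjugation action $\delta_i(g)T=\widetilde{\sigma}_i(g)\circ T\circ\widetilde{\sigma}_0(g)^{-1}$, whose cocycle computation ($\alpha^{-1}$-projectivity, triviality on $H$ using that $\alpha$ is inflated from $I_G(\sigma)/H$) and evaluation-map isomorphism you correctly identify as the crux. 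The only points you leave implicit are routine: that $\sigma\otimes\delta$ is irreducible for every $\delta\in\mathcal{P}Irr_{\alpha^{-1}}(\tfrac{I_G(\sigma)}{H})$ (needed for "$\sigma\otimes$" to land in $\mathcal{R}_{I_G(\sigma)}(\Ind_H^{I_G(\sigma)}\sigma)$, and obtained from $\End_{I_G(\sigma)}(\sigma\otimes\delta)\subseteq\End_H(\sigma\otimes\delta)\simeq\End(\delta)$ together with irreducibility of $\delta$), that the evaluation map can be rescaled to a unitary, and that $\delta_i$ is Borel; none of these is a gap in substance.
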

Analogue of \cite[p.106, Thm.2.58]{KT}, we have:
\begin{lemma}\label{decom}
$\Ind_H^{I_G(\sigma)}\sigma \simeq \sigma\widehat{\otimes} \Ind_{H, \alpha^{-1}}^{I_G(\sigma),  \alpha^{-1}} \C$.
\end{lemma}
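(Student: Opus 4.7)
The plan is to construct an explicit $L$-equivariant unitary isomorphism, where $L := I_G(\sigma)$. Write $\tilde\sigma: L \to \mathcal{U}(W)$ for the $\alpha$-projective extension of $\sigma$ supplied by \cite[Thm.8.2]{Ma3}, and normalize $\alpha$ so that $\alpha(h, l) = \alpha(l, h) = 1$ whenever $h \in H$; this ensures $\tilde\sigma(hl) = \tilde\sigma(h)\tilde\sigma(l) = \sigma(h)\tilde\sigma(l)$. Unpacking the definition of the projective induced representation via the extension $L^{\alpha^{-1}}$ given in Section~2, one identifies $\Ind_{H, \alpha^{-1}}^{L, \alpha^{-1}} \C$ concretely with a space of square-integrable scalar functions on $H\backslash L$ equipped with the $\alpha^{-1}$-projective right-translation action
$$(l_0 \cdot \phi)(l) = \alpha^{-1}(l, l_0)\, \phi(l l_0),$$
whose projective cocycle identity is then a routine consequence of the $2$-cocycle relation for $\alpha$.

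Next, I would define on elementary tensors a map
$$T: \sigma \widehat\otimes \Ind_{H, \alpha^{-1}}^{L, \alpha^{-1}} \C \longrightarrow \Ind_H^L \sigma, \qquad T(w \otimes \phi)(l) = \phi(l)\, \tilde\sigma(l)\, w,$$
and check three points. First, $T(w\otimes\phi)$ lies in $\Ind_H^L \sigma$: since $\phi(hl)=\phi(l)$ and $\tilde\sigma(hl) = \sigma(h)\tilde\sigma(l)$, one gets $T(w\otimes\phi)(hl) = \sigma(h)\, T(w\otimes\phi)(l)$. Second, $T$ is isometric on elementary tensors, since unitarity of $\tilde\sigma(l)$ gives $\int_{H\backslash L} \|T(w\otimes\phi)(l)\|^2\, dl = \|w\|^2\, \|\phi\|^2$, so $T$ extends continuously to the Hilbert tensor product. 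Third, $T$ is $L$-equivariant: the key computation shows that the factor $\alpha(l, l_0)$ arising from $\tilde\sigma(l)\tilde\sigma(l_0) = \alpha(l, l_0)\, \tilde\sigma(l l_0)$ cancels exactly the $\alpha^{-1}(l, l_0)$ appearing in $(l_0 \cdot \phi)(l)$, confirming the general principle that $\alpha\otimes\alpha^{-1}$ trivializes the joint projective action into an honest representation.

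For surjectivity, given $f \in \Ind_H^L \sigma$, set $F(l) := \tilde\sigma(l)^{-1} f(l)$. The same cocycle cancellation yields $F(hl) = F(l)$, so $F$ descends to a square-integrable $W$-valued function on $H\backslash L$; expanding $F(l) = \sum_i \phi_i(l) w_i$ in an orthonormal basis $\{w_i\}$ of $W$ produces the preimage $\sum_i w_i \otimes \phi_i$ of $f$ under $T$. Together with isometry this yields a unitary $L$-isomorphism, establishing the lemma.

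The main obstacle is purely bookkeeping: keeping the Borel/measurability conditions on $L^{\alpha^{-1}}$ consistent with the concrete description of $\Ind_{H, \alpha^{-1}}^{L, \alpha^{-1}} \C$ as scalar functions on $H\backslash L$, and ensuring the tensor product of an $\alpha$-projective and an $\alpha^{-1}$-projective representation is interpreted correctly as an ordinary one. Once that is in place, the verification reduces to the single cocycle cancellation $\alpha \cdot \alpha^{-1} = 1$, which is the conceptual content of the lemma and the reason it mirrors \cite[Thm.2.58]{KT} in the ordinary (non-projective) setting.
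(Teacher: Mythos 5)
Your argument is correct, but it takes a genuinely different route from the paper. You build the intertwiner explicitly: $T(w\otimes\phi)(l)=\phi(l)\,\tilde\sigma(l)w$, and verify covariance, isometry, equivariance and surjectivity by hand, with the whole proof resting on the single cancellation $\alpha\cdot\alpha^{-1}=1$ (note that your ``normalization'' $\alpha(h,l)=\alpha(l,h)=1$ for $h\in H$ is automatic here, since $\alpha$ is pulled back from $\tfrac{I_G(\sigma)}{H}\times\tfrac{I_G(\sigma)}{H}$ and is already normalized at the identity). The paper instead works entirely on the central extensions: it identifies $\sigma^{\alpha}\widehat\otimes\Ind_{H^{\alpha^{-1}}}^{I_G(\sigma)^{\alpha^{-1}}}\C^{\alpha^{-1}}$ with an external tensor product $\Ind_{I_G(\sigma)^{\alpha}\times H^{\alpha^{-1}}}^{I_G(\sigma)^{\alpha}\times I_G(\sigma)^{\alpha^{-1}}}(W^{\alpha}\otimes\C^{\alpha^{-1}})$, descends through the quotient extension $[I_G(\sigma)\times I_G(\sigma)]^{\alpha\times\alpha^{-1}}$, and then applies Mackey's restriction--induction formula to the diagonal subgroup $\Delta I_G(\sigma)$, using $\Delta I_G(\sigma)\cdot[I_G(\sigma)\times H]^{\alpha\times\alpha^{-1}}=[I_G(\sigma)\times I_G(\sigma)]^{\alpha\times\alpha^{-1}}$ and $\Delta I_G(\sigma)\cap[I_G(\sigma)\times H]^{\alpha\times\alpha^{-1}}=\Delta H$. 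Your approach buys concreteness --- an explicit unitary operator and no need to set up the product extensions or the double-coset computation --- at the cost of having to track the Borel-measurability of $\tilde\sigma$ and the function-space model of the projective induction by hand; the paper's approach buys the converse: everything reduces to standard facts about ordinary representations of the extension groups (the $\Ind\otimes\Ind$ identity, \cite[Prop.2.38]{KT}, and Mackey restriction), with no explicit formulas to check. Both are complete proofs of the lemma.
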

\begin{proof}
Note that $\Ind_{I_G(\sigma)^{\alpha}}^{I_G(\sigma)^{\alpha}} W^{\alpha} \widehat{\otimes} \Ind_{H^{ \alpha^{-1}}}^{I_G(\sigma)^{\alpha^{-1}}} \C^{\alpha^{-1}} \simeq \Ind_{I_G(\sigma)^{\alpha} \times H^{ \alpha^{-1}}}^{I_G(\sigma)^{\alpha} \times I_G(\sigma)^{\alpha^{-1}}} (W^{\alpha} \otimes \C^{\alpha^{-1}})$. Let $[I_G(\sigma) \times I_G(\sigma)]^{\alpha \times \alpha^{-1}}$ be the central group extension of $[I_G(\sigma) \times I_G(\sigma)]$ by $\T$ associated to $\alpha \times \alpha^{-1}$. Then there exists a short sequence $1\longrightarrow  \T \stackrel{\iota}{\longrightarrow}I_G(\sigma)^{\alpha} \times I_G(\sigma)^{\alpha^{-1}} \stackrel{\kappa}{\longrightarrow}  [ I_G(\sigma) \times  I_G(\sigma) ]^{\alpha \times \alpha^{-1}}  \longrightarrow 1$, where $\iota(t)=[(1, t), (1, t^{-1})]$, $\kappa([(g_1, t_1), (g_2, t_2)])=[(g_1, g_2), t_1t_2]$, for $t, t_i \in \T$, $g_i \in I_G(\sigma)$. Similarly, there exists another short sequence  $1\longrightarrow  \T \stackrel{\iota}{\longrightarrow}I_G(\sigma)^{\alpha} \times H^{\alpha^{-1}} \stackrel{\kappa}{\longrightarrow}  [ I_G(\sigma) \times H ]^{\alpha \times \alpha^{-1}} \longrightarrow 1$.  Let $\kappa': \frac{I_G(\sigma)^{\alpha} \times I_G(\sigma)^{\alpha^{-1}} }{\T} \longrightarrow  [I_G(\sigma) \times I_G(\sigma)]^{\alpha \times \alpha^{-1}}$ be the group isomorphism. By \cite[Prop.2.38]{KT}, $\Ind_{I_G(\sigma)^{\alpha} \times H^{ \alpha^{-1}}}^{I_G(\sigma)^{\alpha} \times I_G(\sigma)^{\alpha^{-1}}} (\sigma^{\alpha} \otimes \C^{\alpha^{-1}}) \simeq [\Ind_{[I_G(\sigma)\times H]^{\alpha \times  \alpha^{-1}}}^{[I_G(\sigma)\times I_G(\sigma)]^{\alpha \times\alpha^{-1}}} (\sigma \otimes \C)^{\alpha \times\alpha^{-1}}]\circ \kappa'  $. The group $I_G(\sigma)$ can embed in $[I_G(\sigma)\times I_G(\sigma)]^{\alpha \times\alpha^{-1}}$, with the image, denoted by $\Delta I_G(\sigma)$. Moreover $ \Delta I_G(\sigma)  [I_G(\sigma)\times H]^{\alpha \times  \alpha^{-1}} =[I_G(\sigma)\times I_G(\sigma)]^{\alpha \times\alpha^{-1}}$, and  $\Delta I_G(\sigma) \cap  [I_G(\sigma)\times H]^{\alpha \times  \alpha^{-1}} =\Delta H$.  Hence  $\Ind_{\Delta H}^{\Delta I_G(\sigma)}\sigma \otimes \C \simeq \Res_{\Delta I_G(\sigma)}^{{[I_G(\sigma)\times H]^{\alpha \times  \alpha^{-1}}}}\big(\Ind_{[I_G(\sigma)\times H]^{\alpha \times  \alpha^{-1}}}^{[I_G(\sigma)\times I_G(\sigma)]^{\alpha \times\alpha^{-1}}} (\sigma \otimes \C)^{\alpha \times\alpha^{-1}}\big)$, which is essentially the result.
\end{proof}
Let $p: I_G(\sigma) \longrightarrow \frac{I_G(\sigma)}{H}$ be the projection. Analogue of {\cite[Prop.2.38]{KT}}, we have:
\begin{lemma}\label{iso22}
$\Ind_{H, \alpha^{-1}}^{I_G(\sigma),  \alpha^{-1}} \C\simeq  (\Ind_{\frac{H}{H}, 1}^{\frac{I_G(\sigma)}{H},  \alpha^{-1}} \C)\circ p$, linear equivalence.
\end{lemma}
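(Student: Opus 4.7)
\textit{Plan of proof.} The strategy is to translate both sides into ordinary (non-projective) representations of the central extension $I_G(\sigma)^{\alpha^{-1}}$ and then invoke the standard quotient-compatibility of ordinary induction, which is precisely the tool (\cite[Prop.2.38]{KT}) already used in the proof of Lemma \ref{decom}.

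First, since $\alpha$ is a cocycle on $\frac{I_G(\sigma)}{H}$ pulled back to $I_G(\sigma)$ through $p$, its restriction to $H\times H$ is trivial (both factors project to $1$, and $\alpha(1,1)=1$ by our normalization). Under the correspondence in Definition \ref{thedePro}, the left-hand side $\Ind_{H,\alpha^{-1}}^{I_G(\sigma),\alpha^{-1}}\C$ is realized as the ordinary induced representation $\Ind_{H\times\T}^{I_G(\sigma)^{\alpha^{-1}}}\chi$, where $\chi(h,t)=t$; and the right-hand side $\bigl(\Ind_{H/H,1}^{I_G(\sigma)/H,\alpha^{-1}}\C\bigr)\circ p$ is realized as $\bigl(\Ind_{\{1\}\times\T}^{(I_G(\sigma)/H)^{\alpha^{-1}}}\chi'\bigr)\circ p^{\alpha^{-1}}$, where $\chi'(1,t)=t$ and $p^{\alpha^{-1}}\colon (g,t)\mapsto (p(g),t)$.

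Second, I would verify that $H\times\{1\}$ is a closed normal subgroup of $I_G(\sigma)^{\alpha^{-1}}$ and that $p^{\alpha^{-1}}$ identifies the quotient with $(I_G(\sigma)/H)^{\alpha^{-1}}$. Concretely, writing the conjugate of $(h,1)$ by $(g,s)$ as $(ghg^{-1},T)$, a short computation using the cocycle identity $\alpha(gh,g^{-1})\alpha(g,h)=\alpha(g,hg^{-1})\alpha(h,g^{-1})$ together with $\alpha(h,g^{-1})=1$ and $\alpha(g,hg^{-1})=\alpha(g,g^{-1})$ (both hold because $\alpha$ factors through $\frac{I_G(\sigma)}{H}$) yields $T=1$. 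The kernel of $p^{\alpha^{-1}}$ is then exactly $H\times\{1\}\simeq H$, and the induced Borel-group isomorphism $I_G(\sigma)^{\alpha^{-1}}/H\simeq (I_G(\sigma)/H)^{\alpha^{-1}}$ is automatically a topological isomorphism by Lemma \ref{mor}.

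Third, since $\chi$ is trivial on the normal subgroup $H\times\{1\}$ and descends to $\chi'$ on $(H\times\T)/H=\{1\}\times\T$, applying \cite[Prop.2.38]{KT} with $G=I_G(\sigma)^{\alpha^{-1}}$, $K=H\times\{1\}$, $N=H\times\T$ gives
\[
\Ind_{H\times\T}^{I_G(\sigma)^{\alpha^{-1}}}\chi \;\simeq\; \bigl(\Ind_{\{1\}\times\T}^{(I_G(\sigma)/H)^{\alpha^{-1}}}\chi'\bigr)\circ p^{\alpha^{-1}},
\]
which, once translated back through the section $g\mapsto (g,1)$ into projective representations of $I_G(\sigma)$, is exactly the desired linear equivalence. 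The main obstacle is the central-extension bookkeeping in the second step — verifying normality of $H\times\{1\}$ and matching the quotient with $(I_G(\sigma)/H)^{\alpha^{-1}}$ as locally compact groups — after which the lemma reduces to a direct application of the already-invoked quotient-induction principle.
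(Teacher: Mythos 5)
Your proof is correct and follows essentially the same route as the paper's: lift both sides to ordinary representations of the central extension $I_G(\sigma)^{\alpha^{-1}}$ and then invoke the quotient-compatibility of ordinary induction from \cite[Prop.2.38]{KT}. The only difference is that you spell out the bookkeeping the paper leaves implicit --- the normality of $H\times\{1\}$ in $I_G(\sigma)^{\alpha^{-1}}$ (your cocycle computation giving $T=1$ is right, since $\alpha$ factors through $\frac{I_G(\sigma)}{H}$) and the identification of the quotient with $\bigl(\frac{I_G(\sigma)}{H}\bigr)^{\alpha^{-1}}$ as a topological group.
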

\begin{proof}
We can lift both projective representations to ordinary representations of $I_G(\sigma)^{\alpha^{-1}}$. The left hand side is the representation $\Ind_{H^{\alpha^{-1}}}^{I_G(\sigma)^{\alpha^{-1}}}\C^{\alpha^{-1}}$, and the right hand side is the representation $[\Ind_{(\frac{H}{H})^{ \alpha^{-1}}}^{(\frac{I_G(\sigma)}{H})^{\alpha^{-1}}} \C^{\alpha^{-1}}] \circ P$, where $P$ is the projection $ I_G(\sigma)^{\alpha^{-1}} \longrightarrow \frac{I_G(\sigma)^{\alpha^{-1}}}{H^{\alpha^{-1}}}$. By {\cite[Prop.2.38]{KT}}, these two ordinary representations are unitary equivalence. Hence the restricted projective representations are  linearly unitary equivalence.
\end{proof}

\subsection{Rieffel Equivalence}\label{RE}
 Following the idea of  Rieffel in \cite{Ri}, we can define some equivalent relations on   $\widehat{G}$ as well as $\widehat{H}$.
\begin{lemma}\label{eqiuv}
\begin{itemize}
\item[(1)] For $\pi_1, \pi_2\in \widehat{G}$, if $\mathcal{R}_{H}(\pi_1) \cap \mathcal{R}_{H}(\pi_2) \neq \emptyset  $ then  $\mathcal{R}_{H}(\pi_1) = \mathcal{R}_{H}(\pi_2)$.
    \item[(2)] For $\sigma_1, \sigma_2\in \widehat{H}$, if $\mathcal{R}_{G}(\Ind_{H}^G \sigma_1) \cap \mathcal{R}_{G}(\Ind_{H}^G \sigma_2) \neq \emptyset  $ then $\mathcal{R}_{G}(\Ind_{H}^G \sigma_1) =\mathcal{R}_{G}(\Ind_{H}^G \sigma_2) $.
\end{itemize}
\end{lemma}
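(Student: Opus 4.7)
The plan is to reduce both statements to direct applications of the Clifford--Mackey theorem (Theorem \ref{cliffordadmissible1}) combined with Frobenius reciprocity (Theorem \ref{KT}), which applies freely since every element of $\widehat{G}$ is finite-dimensional (compactness of $G$). The unifying idea is that the equivalence classes in both (1) and (2) are essentially parametrized by $G$-orbits on $\widehat{H}$ under the conjugation action $\tau\mapsto\tau^g$.

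For part (1), I would pick $\sigma\in\mathcal{R}_H(\pi_1)\cap\mathcal{R}_H(\pi_2)$ and invoke Theorem \ref{cliffordadmissible1}(5), which says that for each $i$ the set $\mathcal{R}_H(\pi_i)$ is a single $G$-orbit. Since both orbits contain $\sigma$, they both equal $G\cdot\sigma$, and hence coincide with each other. This reduces part (1) to a one-line observation once we have the Clifford decomposition in hand.

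For part (2), I would start with $\pi\in\mathcal{R}_G(\Ind_H^G\sigma_1)\cap\mathcal{R}_G(\Ind_H^G\sigma_2)$. Applying Frobenius reciprocity to the finite-dimensional $\pi$ gives
\[
\Hom_H(\sigma_i,\Res_H^G\pi)\simeq\Hom_G(\Ind_H^G\sigma_i,\pi)\neq 0,
\]
so $\sigma_1,\sigma_2\in\mathcal{R}_H(\pi)$. By Theorem \ref{cliffordadmissible1}(2) there exists $g\in G$ with $\sigma_2\simeq\sigma_1^g$. The remaining step is to note that, since $H$ is normal, the $G$-translation $f\mapsto(x\mapsto f(gx))$ implements a unitary equivalence $\Ind_H^G\sigma_1\simeq\Ind_H^G\sigma_1^g\simeq\Ind_H^G\sigma_2$, and hence the two induced representations have identical spectra, i.e.\ $\mathcal{R}_G(\Ind_H^G\sigma_1)=\mathcal{R}_G(\Ind_H^G\sigma_2)$.

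The main obstacle, and really the only non-bookkeeping point, is the last observation that conjugate characters on a normal subgroup yield unitarily equivalent induced representations; this is a classical fact and a short direct verification with the transformation law $f(hx)=\sigma(h)f(x)$, but it is the single nontrivial ingredient that must be checked carefully to make the argument rigorous.
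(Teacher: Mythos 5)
Your proposal is correct and follows essentially the same route as the paper: part (1) is the one-line observation that each $\mathcal{R}_H(\pi_i)$ is the single $G$-orbit $\{\sigma^g \mid g\in G\}$, and part (2) uses Frobenius reciprocity to place $\sigma_1,\sigma_2$ in $\mathcal{R}_H(\pi)$, Clifford--Mackey to get $\sigma_2\simeq\sigma_1^g$, and the standard fact that conjugate representations of a normal subgroup induce equivalently (the paper cites \cite[p.90, Pro.2.39]{KT} for this last step rather than verifying it directly).
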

\begin{proof}
1) Assume $\sigma \in \mathcal{R}_{H}(\pi_1) \cap \mathcal{R}_{H}(\pi_2) $. Then $\mathcal{R}_{H}(\pi_1)=\{ \sigma^g \mid g\in G\}=\mathcal{R}_{H}(\pi_2)$.\\
2) Assume $\pi \in \mathcal{R}_{G}(\Ind_{H}^G \sigma_1) \cap \mathcal{R}_{G}(\Ind_{H}^G \sigma_2)$. Then $\sigma_1, \sigma_2\in \mathcal{R}_H(\pi)$, and $\sigma_2\simeq \sigma_1^g$, for some $g\in G$.  By \cite[p.90, Pro.2.39]{KT}, $\Ind_H^G \sigma_1 \simeq \Ind_H^G \sigma_1^g \simeq \Ind_H^G \sigma_2$.  Hence the result holds.
\end{proof}
For $\pi_1, \pi_2 \in \widehat{G}$, we call  $\pi_1\sim_{(G, H)} \pi_2$ if $\mathcal{R}_{H}(\pi_1)= \mathcal{R}_{H}(\pi_2)$. Clearly, $\sim_{(G, H)}$ defines an equivalent relation on $\widehat{G}$. For $\sigma_1, \sigma_2 \in \widehat{H} $, we call $\sigma_1\sim_{(G, H)} \sigma_2$ if $\mathcal{R}_{G}(\Ind_H^G\sigma_1)= \mathcal{R}_{G}(\Ind_H^G\sigma_2)$.
\begin{lemma}\label{GHEQ}
\begin{itemize}
\item[(1)] If  $\pi_1\sim_{(G, H)} \pi_2$, then for any $\sigma_i \in \mathcal{R}_{H}(\Res_{H}^G \pi_i)$,  $\sigma_1\sim_{(G, H)} \sigma_2$.
\item[(2)] If  $\sigma_1\sim_{(G, H)} \sigma_2$, then for any $\pi_i \in \mathcal{R}_{G}(\Ind_{H}^G \sigma_i)$, $\pi_1\sim_{(G, H)} \pi_2$.
\end{itemize}
\end{lemma}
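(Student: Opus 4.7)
The plan is to reduce each direction to the two tools already established: Clifford's transitivity theorem (Theorem \ref{cliffordadmissible1}) and the ``nonempty intersection implies equality'' principle of Lemma \ref{eqiuv}, glued together by Frobenius reciprocity (Theorem \ref{KT}).

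For part (1), I start from the hypothesis $\mathcal{R}_H(\pi_1)=\mathcal{R}_H(\pi_2)$, so any $\sigma_1\in\mathcal{R}_H(\pi_1)$ and $\sigma_2\in\mathcal{R}_H(\pi_2)$ both lie in the \emph{same} set $\mathcal{R}_H(\pi_1)$. Theorem \ref{cliffordadmissible1}(2) then provides $g\in G$ with $\sigma_2\simeq \sigma_1^g$. Applying \cite[p.~90, Pro.~2.39]{KT} (which was already invoked in the proof of Lemma \ref{eqiuv}(2)) gives $\Ind_H^G\sigma_1\simeq \Ind_H^G\sigma_1^g\simeq \Ind_H^G\sigma_2$, whence $\mathcal{R}_G(\Ind_H^G\sigma_1)=\mathcal{R}_G(\Ind_H^G\sigma_2)$, i.e.\ $\sigma_1\sim_{(G,H)}\sigma_2$.

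For part (2), I use Frobenius reciprocity in its cross form. Fix $\pi_i\in\mathcal{R}_G(\Ind_H^G\sigma_i)$. Since $\pi_i\in\widehat{G}$ is finite-dimensional (Peter--Weyl), Theorem \ref{KT} yields
\[
0\neq \Hom_G(\Ind_H^G\sigma_i,\pi_i)\simeq \Hom_H(\sigma_i,\Res_H^G\pi_i),
\]
so $\sigma_i\in \mathcal{L}_H(\pi_i)=\mathcal{R}_H(\pi_i)$ (using the identification from \S\ref{notation}). Now the hypothesis $\sigma_1\sim_{(G,H)}\sigma_2$ gives $\pi_1\in\mathcal{R}_G(\Ind_H^G\sigma_1)=\mathcal{R}_G(\Ind_H^G\sigma_2)$, so applying Frobenius again with the roles of the $\sigma$'s and $\pi$'s swapped yields $\sigma_2\in\mathcal{R}_H(\pi_1)$. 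Similarly $\sigma_1\in\mathcal{R}_H(\pi_2)$. Hence $\mathcal{R}_H(\pi_1)\cap\mathcal{R}_H(\pi_2)\supseteq\{\sigma_1,\sigma_2\}\neq\emptyset$, and Lemma \ref{eqiuv}(1) forces $\mathcal{R}_H(\pi_1)=\mathcal{R}_H(\pi_2)$, i.e.\ $\pi_1\sim_{(G,H)}\pi_2$.

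There is no serious obstacle here: both parts are almost formal once Clifford's transitivity and Frobenius reciprocity are in hand. The only point to watch is the direction convention for $\mathcal{R}_H$ versus $\mathcal{L}_H$ when unwinding Frobenius, but the compactness identity $\mathcal{R}_H(\pi)=\mathcal{L}_H(\pi)$ recorded in \S\ref{notation} removes this ambiguity, and the dimensional hypothesis in Theorem \ref{KT} is automatic because irreducible unitary representations of the compact group $G$ are finite-dimensional.
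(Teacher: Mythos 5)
Your proof is correct and follows essentially the same route as the paper's: part (1) reduces to Clifford conjugacy ($\sigma_2\simeq\sigma_1^g$) plus the invariance of $\Ind_H^G$ under conjugation, and part (2) produces a common element of $\mathcal{R}_H(\pi_1)\cap\mathcal{R}_H(\pi_2)$ and invokes Lemma \ref{eqiuv}(1). The only cosmetic difference is that in part (2) the paper routes the common element through the conjugate $\sigma_1^g$, whereas you obtain $\sigma_1,\sigma_2$ directly in both sets by a symmetric application of Frobenius reciprocity; both arguments are sound.
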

\begin{proof}
1) In this case, $\sigma_2 \simeq \sigma_1^g$, for some $g\in  G$,  so $\sigma_1 \sim_{(G, H)} \sigma_2$.\\
2) In this case, $\sigma_i \in \mathcal{R}_{H}(\pi)$, for some $\pi\in \widehat{G}$. Hence $\sigma_2\simeq \sigma_1^g$, for some $g\in G$. Then $\sigma_1^g \in \mathcal{R}_H(\pi_1) \cap \mathcal{R}_H(\pi_2)$. By the above lemma \ref{eqiuv}, $\pi_1\sim_{(G, H)} \pi_2$.
\end{proof}
Therefore without regard for the multiplicity, there exists the following correspondence:
\[ \frac{\widehat{H}}{\sim_{(G, H)}}\Longleftrightarrow^{\Ind_H^G}_{\Res_H^G}  \frac{\widehat{G}}{\sim_{(G, H)}}.\]
\begin{lemma}
If $G$ is an abelian group, then:
\begin{itemize}
\item[(1)] $ \frac{\widehat{H}}{\sim_{(G, H)}}$, $\frac{\widehat{G}}{\sim_{(G, H)}}$ both are also groups,
 \item[(2)] $\Ind_H^G:  \frac{\widehat{H}}{\sim_{(G, H)}} \longrightarrow \frac{\widehat{G}}{\sim_{(G, H)}}$ is a group isomorphism.
 \end{itemize}
\end{lemma}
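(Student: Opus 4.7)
The plan is to exploit the fact that when $G$ is abelian compact (hence $H$ is also abelian compact), every irreducible unitary representation is a continuous character, so $\widehat{G}$ and $\widehat{H}$ already carry abelian group structures under pointwise multiplication. Frobenius reciprocity together with Pontryagin duality should then reduce both assertions to a clean quotient-of-abelian-groups picture.

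First I would identify the two equivalence relations explicitly. For $\pi \in \widehat{G}$ a character, $\Res_H^G\pi$ is the single character $\pi|_H$, so $\mathcal{R}_H(\pi)=\{\pi|_H\}$, and $\pi_1\sim_{(G,H)}\pi_2$ iff $\pi_1\pi_2^{-1}$ lies in the subgroup $K:=\{\chi\in\widehat{G}\mid\chi|_H=1\}$. Hence $\widehat{G}/\sim_{(G,H)}$ inherits a group structure as the quotient $\widehat{G}/K$. For the other side, Frobenius reciprocity (Theorem \ref{KT}) gives $\mathcal{R}_G(\Ind_H^G\sigma)=\{\pi\in\widehat{G}\mid \pi|_H=\sigma\}$; these fibres are always non-empty (since $\Ind_H^G\sigma\neq 0$ has an irreducible constituent), and two of them coincide iff $\sigma_1=\sigma_2$. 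Thus $\sim_{(G,H)}$ on $\widehat{H}$ is the trivial equivalence and $\widehat{H}/\sim_{(G,H)}=\widehat{H}$ is already a group.

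Next I would define the map on classes by $[\sigma]\mapsto[\pi]$ for any $\pi\in\widehat{G}$ with $\pi|_H=\sigma$, i.e.\ any irreducible constituent of $\Ind_H^G\sigma$ (which exists because $I_G(\sigma)=G$ in the abelian case, so Clifford--Mackey item (6) of Theorem \ref{cliffordadmissible1} guarantees that $\pi\simeq\widetilde{\sigma}$ is an extension). Well-definedness (any two choices of $\pi$ differ by an element of $K$) and the homomorphism property both follow from the identity $(\pi_1\pi_2)|_H=\pi_1|_H\cdot\pi_2|_H$, while the evident candidate inverse $[\pi]\mapsto[\pi|_H]$ witnesses bijectivity at the level of sets. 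Writing everything out, the claim amounts to the canonical isomorphism $\widehat{G}/K\simeq\widehat{H}$ induced by restriction.

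The only genuine obstacle I expect is establishing that every $\sigma\in\widehat{H}$ actually extends to a character of $G$, equivalently that restriction $\widehat{G}\to\widehat{H}$ is surjective; this is the standard Pontryagin-duality extension theorem for closed subgroups of second-countable compact abelian groups, which I would invoke directly rather than reprove. Once that is in hand, all remaining verifications are formal manipulations with pointwise products of characters.
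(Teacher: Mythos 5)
Your proposal is correct and follows essentially the same route as the paper: both identify the relation on $\widehat{H}$ as trivial (conjugation by an abelian $G$ does nothing) and the relation on $\widehat{G}$ as agreement of restrictions to $H$, reducing everything to the canonical isomorphism $\widehat{G}/K\simeq\widehat{H}$ with $K=\{\chi\in\widehat{G}\mid \chi|_H=1\}$. You are merely more explicit than the paper about the surjectivity of restriction (equivalently, non-emptiness of $\mathcal{R}_G(\Ind_H^G\sigma)$), which the paper leaves implicit.
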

\begin{proof}
 For  $\chi_1, \chi_2\in \widehat{H}$,   $\chi_1  \sim_{(G, H)} \chi_2$ iff $\exists g\in G$ such that $\chi_2\simeq \chi_1^g =\chi_1$ iff $\chi_1=\chi_2$.  For $\Sigma_1, \Sigma_2\in \widehat{G}$, then $\Sigma_1\sim_{(G, H)} \Sigma_2$ iff $\Sigma_2|_{H} =\Sigma_1|_{H}$.  So $ \frac{\widehat{H}}{\sim_{(G, H)}}$, $\frac{\widehat{G}}{\sim_{(G, H)}}$  are two isomorphic  groups.
\end{proof}
\subsection{Example} Assume that $\frac{G}{H}$ is an abelian compact group. Keep the notations. Note that $\frac{G}{H}$  is  compact, so the image of a character is also compact and lies in $\T$.
\begin{lemma}
If $\# \frac{G}{H}$ is a finite cyclic group, then $\Ha^2(\frac{G}{H}, \T)=0$.
\end{lemma}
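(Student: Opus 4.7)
The plan is to reduce to the classical fact that $\Ha^{2}(C_n, A) = 0$ whenever $A$ is a divisible abelian group with trivial $C_n$-action. Since $G/H$ is a finite cyclic group, it is discrete, so every Borel map out of it is automatically continuous and Moore's $\Ha^{2}_m(G/H, \T)$ coincides with the usual group cohomology $\Ha^{2}(G/H, \T)$. I therefore fix $G/H = \langle g \rangle$ of order $n$ and a normalized $2$-cocycle $\alpha: G/H \times G/H \longrightarrow \T$, and aim to show $\alpha$ is a coboundary.

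To do this, I form the central extension $E = \T \times_\alpha (G/H)$ with multiplication
\[
(t_1, g^i)(t_2, g^j) = \bigl(t_1 t_2\, \alpha(g^i, g^j),\; g^{i+j}\bigr),
\]
and take the lift $\tilde g = (1, g)$. A short induction on $k$ (using $\alpha(g^k, g)$ at each step) gives
\[
\tilde g^{k} = \Bigl(\prod_{i=1}^{k-1} \alpha(g^i, g),\; g^{k}\Bigr),
\]
so in particular $\tilde g^{n} = (c, 1)$ with $c = \prod_{i=1}^{n-1} \alpha(g^i, g) \in \T$. Since $\T$ is divisible, choose $d \in \T$ with $d^{n} = c$ and replace $\tilde g$ by the new lift $\tilde g' = (d^{-1}, 1)\cdot \tilde g$; then $(\tilde g')^{n} = (d^{-n} c,\, 1) = (1, 1)$. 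Hence $g^{k} \longmapsto (\tilde g')^{k}$ is a well-defined group homomorphism $G/H \longrightarrow E$ splitting the central extension $1 \to \T \to E \to G/H \to 1$. A split central extension represents the trivial class in $\Ha^{2}$, so $[\alpha] = 0$, and the result follows.

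There is no real obstacle here: the argument uses only the divisibility of $\T$ together with the fact that a cyclic group is generated by one element, so exactly one lift has to be re-normalized to kill the obstruction $c$. The only computation that needs a line is the inductive formula for $\tilde g^{k}$; turning the splitting $g \mapsto \tilde g'$ into an explicit Borel map $\phi: G/H \to \T$ with $\partial \phi = \alpha$ is then immediate.
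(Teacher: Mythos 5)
Your proof is correct. The paper itself gives no argument here — it simply cites \cite[Prop.~11.46, Coro.~11.47]{CuRe} — and the classical fact behind that citation is exactly the computation you carry out: for a finite cyclic group with trivial coefficients, $\Ha^2(C_n, A)\simeq A/A^n$, which vanishes because $\T$ is divisible. Your version makes this self-contained by splitting the central extension: lift the generator, compute the obstruction $c=\prod_{i=1}^{n-1}\alpha(g^i,g)$, extract an $n$-th root using divisibility of $\T$, and renormalize the lift; all the steps (the inductive formula for $\tilde g^{k}$, the centrality of $(d^{-1},1)$ under a normalized cocycle, the well-definedness of $g^k\mapsto(\tilde g')^k$, and the identification of Moore cohomology with ordinary group cohomology for the discrete group $\frac{G}{H}$) check out.
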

\begin{proof}
See  \cite[Prop. 11.46, Coro. 11.47]{CuRe}.
\end{proof}
\begin{lemma}\label{abl}
Keep the notations of Theorem \ref{cliffordadmissible2}. If $\# \frac{G}{H}$ is a finite cyclic group, then $\Res_{H}^G \pi$ is multiplicity-free.
\end{lemma}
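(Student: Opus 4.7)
The plan is to combine the two Clifford--Mackey theorems \ref{cliffordadmissible1} and \ref{cliffordadmissible2} with the vanishing of $\Ha^2$ for cyclic groups recalled just above, to show that the multiplicity $m$ in the isotypic decomposition equals $1$.

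First I would decompose $\Res_H^G \pi$ using Theorem \ref{cliffordadmissible1}. Pick any $\sigma \in \mathcal{R}_H(\pi)$. Then $\Res_H^G \pi \simeq \bigoplus_{\sigma' \in \mathcal{R}_H(\pi)} \widetilde{\sigma'}$ with $\widetilde{\sigma}|_H \simeq m\sigma$; distinct $G$-conjugates of $\sigma$ are inequivalent as $H$-representations, so multiplicity-freeness of $\Res_H^G \pi$ is equivalent to $m = 1$. Moreover $\pi \simeq \Ind_{I_G(\sigma)}^{G} \widetilde{\sigma}$, so by Theorem \ref{cliffordadmissible2}(1)(d) we have $m = m_H(\widetilde{\sigma}, \sigma)$.

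Next I would identify $\widetilde{\sigma}$ via the projective--representation dictionary. By Theorem \ref{cliffordadmissible2}(3) there exists $\delta \in \mathcal{P}Irr_{\alpha^{-1}}(I_G(\sigma)/H)$ such that $\widetilde{\sigma} \simeq \sigma \otimes \delta$ as ordinary representations, where $\alpha$ is the $2$-cocycle on $I_G(\sigma)/H \times I_G(\sigma)/H$ produced by extending $\sigma$. Since the restriction of $\delta$ to $H/H$ is $(\dim \delta) \cdot \mathbf{1}$ and $\sigma \otimes \mathbf{1} = \sigma$, we obtain $m = m_H(\widetilde{\sigma}, \sigma) = \dim \delta$. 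It therefore suffices to show $\dim \delta = 1$.

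To get this, let $K := I_G(\sigma)/H$. Since $I_G(\sigma)$ is an open subgroup of $G$ containing $H$, the group $K$ embeds as a subgroup of the finite cyclic group $G/H$, so $K$ itself is finite cyclic. By the lemma just stated, $\Ha^2(K, \T) = 0$, so the cocycle $\alpha^{-1}$ restricted to $K$ is cohomologically trivial; write $\alpha^{-1}(g_1, g_2) = \phi(g_1)\phi(g_2)\phi(g_1 g_2)^{-1}$ for a Borel map $\phi$. Then $\phi^{-1}\delta$ is an ordinary unitary representation of $K$ of the same dimension as $\delta$, and it is still irreducible. Since $K$ is finite abelian, every irreducible ordinary representation is one-dimensional, so $\dim \delta = 1$, whence $m = 1$.

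The only delicate point is the reduction from the $\alpha^{-1}$-projective side to the ordinary side; but this is precisely the content of the twisting remark in the footnote and in Corollary \ref{ISOP}, so the argument is routine once one has the cocycle $\alpha^{-1}$ in hand. No other serious obstacle appears; the heart of the statement is simply that the cocycle attached to the Clifford extension necessarily trivializes when $G/H$ is finite cyclic.
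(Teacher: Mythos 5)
Your proof is correct and takes essentially the same route as the paper: both reduce the claim to showing $\dim \delta = 1$, and both obtain this from the vanishing of $\Ha^2$ of the finite cyclic group $I_G(\sigma)/H$ together with the fact that irreducible ordinary representations of a finite abelian group are one-dimensional. The only cosmetic difference is that the paper passes to the split central extension $(\frac{I_G(\sigma)}{H})^{\alpha^{-1}} \simeq \frac{I_G(\sigma)}{H}\times \T$ and uses that this group is abelian, whereas you trivialize the cocycle $\alpha^{-1}$ as a coboundary and twist $\delta$ directly; these are the same argument.
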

\begin{proof}
In this case, $[\alpha]\in \Ha^2(\frac{I_{G}(\sigma)}{H}, \T)=0$. So $(\frac{I_G(\sigma)}{H})^{\alpha^{-1}} \simeq \frac{I_G(\sigma)}{H}\times \T$, which is also an abelian group. Since $\widetilde{\sigma}_1 \simeq \sigma \otimes \delta_1$, as projective representations, and $\delta_1^{\alpha^{-1}}$ is an irreducible representation of $(\frac{I_G(\sigma)}{H})^{\alpha^{-1}}$,  we can  assert $\dim \delta_1=1$, and $\widetilde{\sigma}_1|_{H} \simeq \sigma$.
\end{proof}
\begin{corollary}\label{mutifree}
If $\# \frac{G}{H}$ is a finite abelian group of order $n$, and $n$ has no  square factor, then $\Res_{H}^G \pi$ is multiplicity-free.
\end{corollary}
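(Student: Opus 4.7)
The plan is to reduce directly to the cyclic case treated in Lemma \ref{abl}. The key observation is purely group-theoretic: a finite abelian group of squarefree order is automatically cyclic. Indeed, by the structure theorem (or CRT), a finite abelian group $A$ decomposes as a product of its Sylow $p$-subgroups, and each Sylow $p$-subgroup is itself a product of cyclic $p$-groups $\Z/p^{a_i}\Z$. If $|A|=n$ is squarefree, then for each prime $p \mid n$ the $p$-part of $|A|$ is exactly $p$, forcing the Sylow $p$-subgroup to be $\Z/p\Z$. Combining via CRT gives $A \simeq \prod_{p \mid n} \Z/p\Z \simeq \Z/n\Z$.

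Applying this to $A = \frac{G}{H}$ shows that $\frac{G}{H}$ is cyclic of order $n$. Then Lemma \ref{abl} applies verbatim and yields that $\Res_H^G \pi$ is multiplicity-free.

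If one preferred not to invoke cyclicity of $\frac{G}{H}$ globally, the same argument works with $\frac{I_G(\sigma)}{H}$ in place of $\frac{G}{H}$: as a subgroup of the finite abelian group $\frac{G}{H}$, its order divides $n$ and is therefore also squarefree, so $\frac{I_G(\sigma)}{H}$ is cyclic. The previous lemma then gives $\Ha^2\!\bigl(\tfrac{I_G(\sigma)}{H},\T\bigr)=0$, and the argument of Lemma \ref{abl} (namely $[\alpha]=0$, so $\delta_1$ is a one-dimensional character of an abelian central extension, hence $\widetilde{\sigma}_1|_H \simeq \sigma$) goes through unchanged.

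There is no real obstacle: the entire content is the elementary fact that squarefree order forces a finite abelian group to be cyclic, after which the corollary is an immediate consequence of Lemma \ref{abl}.
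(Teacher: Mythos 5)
Your proof is correct and is exactly the intended reduction: the paper gives no separate argument for this corollary, treating it as immediate from Lemma \ref{abl} once one notes that a finite abelian group of squarefree order is cyclic. Your fallback remark about $\frac{I_G(\sigma)}{H}$ is also sound (a subgroup of a finite cyclic group is cyclic, so $\Ha^2(\frac{I_G(\sigma)}{H},\T)=0$ and the proof of Lemma \ref{abl} goes through verbatim), but it is not needed beyond what the cyclicity of $\frac{G}{H}$ already provides.
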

 \begin{lemma}\label{CHI}
If $(\pi_1,V_1)$, $( \pi_2,V_2)\in \mathcal{R}_{G}(\Ind_H^G \sigma)$,  then $\pi_1\simeq \pi_2\otimes \chi$, for some $\chi\in \Irr(\frac{G}{H})=\widehat{(\frac{G}{H})}$.
\end{lemma}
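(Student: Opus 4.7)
The plan is to package the intertwining data between $\pi_1$ and $\pi_2$ as a finite-dimensional unitary representation of $G/H$ and extract the desired twist from its character decomposition. Concretely, I would work with the space $W := \Hom_H(\pi_1,\pi_2)$ equipped with the conjugation $G$-action $(g\cdot T)(v) := \pi_2(g)\,T\,\pi_1(g^{-1})v$.

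First I would show that $W$ is nonzero and finite-dimensional. Frobenius reciprocity (Theorem~\ref{KT}) applied to the hypothesis $\pi_i\in\mathcal{R}_G(\Ind_H^G\sigma)$ gives $\sigma\in\mathcal{R}_H(\pi_i)$ for $i=1,2$. Theorem~\ref{cliffordadmissible1} then forces $\mathcal{R}_H(\pi_1)=\mathcal{R}_H(\pi_2)$, the common value being the finite $G$-orbit of $\sigma$, and provides Clifford decompositions $\Res_H^G\pi_i\simeq d_i\bigoplus_{\sigma'\in\mathcal{R}_H(\pi_i)}\sigma'$ for some positive integers $d_1,d_2$. Consequently $W$ is a finite direct sum of nonzero finite-dimensional matrix spaces $\Hom_H(d_1\sigma',d_2\sigma')$, so it is itself nonzero and finite-dimensional.

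Next I would verify that $W$ is a unitary $G/H$-module. Normality of $H$ ensures $g\cdot T$ is again $H$-equivariant (rewriting the $H$-equivariance of $T$ using $g^{-1}hg\in H$), while the $H$-equivariance of $T$ itself gives $h\cdot T=T$ for all $h\in H$, so the action descends to $G/H$. Equipping $W$ with the Hilbert--Schmidt inner product $\langle T,S\rangle=\tr(T^{\ast}S)$ makes this action unitary, since the $\pi_i$ are unitary and trace is conjugation-invariant. Because $G/H$ is compact abelian, $W$ splits as $W=\bigoplus_{\chi\in\widehat{G/H}}W_\chi$ with $W_\chi=\{T\in W:g\cdot T=\chi(gH)\,T\}$.

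Finally I would rewrite the eigenspace condition. Rearranging $\pi_2(g)\,T\,\pi_1(g^{-1})=\chi(gH)\,T$ gives $T\circ\pi_1(g)=\overline{\chi}(gH)\,\pi_2(g)\circ T=(\pi_2\otimes\overline{\chi})(g)\circ T$, so $W_\chi=\Hom_G(\pi_1,\pi_2\otimes\overline{\chi})$. Since $W\neq 0$, some $W_\chi$ is nonzero; applying Schur's lemma to the irreducible representations $\pi_1$ and $\pi_2\otimes\overline{\chi}$ yields $\pi_1\simeq\pi_2\otimes\overline{\chi}$, which is the claim with $\chi':=\overline{\chi}\in\widehat{G/H}$. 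I do not foresee any serious obstacle; the only items needing verification — that $g\cdot T$ is $H$-equivariant, that $H$ acts trivially, and that the action is unitary for the Hilbert--Schmidt inner product — are direct consequences of the normality of $H$ in $G$ and the $H$-equivariance of elements of $W$.
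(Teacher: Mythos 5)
Your proposal is correct and follows essentially the same route as the paper: both consider the conjugation action of $G/H$ on the nonzero finite-dimensional space $\Hom_H(V_1,V_2)$, extract a nonzero character eigenvector $T$ with $T\circ\pi_1(g)=\overline{\chi}(g)\,\pi_2(g)\circ T$, reinterpret it as a nonzero element of $\Hom_G(\pi_1,\pi_2\otimes\overline{\chi})$, and conclude by Schur's lemma. Your write-up merely supplies more detail than the paper on why the space is nonzero and finite-dimensional and why the action is unitary, which is welcome but not a different argument.
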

\begin{proof}
   Consider the finite-dimensional vector space $\Hom_H(V_1, V_2)$.  Define an action of $G$ or $\frac{G}{H}$ on it by $\varphi^g=\pi_2 (g^{-1})\circ \varphi \circ \pi_1(g)$, for $g\in G$. Then there exists a non-zero element $F$, and $\chi \in  \Irr(\frac{G}{H})$  such that $F^g=\chi(g) F$, for any $g\in G$. Then $F\in \Hom_{G}(\pi_1, \pi_2\otimes \chi)$.  Since $\pi_1$, $\pi_2\otimes \chi$ both are irreducible representations, $\pi_1\simeq \pi_2\otimes \chi$.
\end{proof}

\begin{lemma}\label{CHI2}
Keep the notations of Thm.\ref{cliffordadmissible2}.  If $\delta_1 \in  \mathcal{P}Irr_{\alpha^{-1}}(\frac{I_{G}(\sigma)}{H})$, then $ \mathcal{P}Irr_{\alpha^{-1}}(\frac{I_{G}(\sigma)}{H})=\{ \delta_1\otimes \chi_i \mid \chi_i \in \widehat{(\frac{I_{G}(\sigma)}{H})}\}$. Moreover, $\#\{\chi_i\mid  \delta_1\otimes \chi_i\simeq \delta_1\}=(\dim \delta_1)^2$.
\end{lemma}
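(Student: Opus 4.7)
The plan is to treat both halves by exploiting that $A := \frac{I_G(\sigma)}{H}$ is an abelian subgroup of the abelian compact group $\frac{G}{H}$, together with the bijection in Theorem \ref{cliffordadmissible2}(4) and the preceding Lemma \ref{CHI}.

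For the first equality, I fix an arbitrary $\delta' \in \mathcal{P}Irr_{\alpha^{-1}}(A)$ and form $\widetilde{\sigma}' := \sigma \otimes \delta'$, which lies in $\mathcal{R}_{I_G(\sigma)}(\Ind_H^{I_G(\sigma)}\sigma)$ by Theorem \ref{cliffordadmissible2}(4). Since $\widetilde{\sigma}_1 := \sigma \otimes \delta_1$ also lies in this set, I apply Lemma \ref{CHI} with $I_G(\sigma)$ in place of $G$ (legitimate because $A$ is abelian) to produce $\chi \in \widehat{A}$ with $\widetilde{\sigma}' \simeq \widetilde{\sigma}_1 \otimes \chi = \sigma \otimes (\delta_1 \otimes \chi)$. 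The representation $\delta_1 \otimes \chi$ again belongs to $\mathcal{P}Irr_{\alpha^{-1}}(A)$, because tensoring by an ordinary character preserves both the cocycle class and irreducibility; the injectivity of $\sigma \otimes -$ from Theorem \ref{cliffordadmissible2}(4) then forces $\delta' \simeq \delta_1 \otimes \chi$ linearly, which proves the claimed parametrisation.

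For the multiplicity count, I analyse the ordinary representation $\delta_1 \otimes \overline{\delta_1}$ of $A$. Since $\alpha$ takes values in $\T$, $\overline{\delta_1}$ is $\alpha$-projective, so $\delta_1 \otimes \overline{\delta_1}$ is genuinely $1$-projective, i.e.\ an honest representation of $A$; moreover $\dim \delta_1$ is finite since Theorem \ref{cliffordadmissible2}(1)(a) gives $\dim \delta_1 = m_1 < \infty$. Because $A$ is abelian and compact, $\delta_1 \otimes \overline{\delta_1} \simeq \bigoplus_{\chi \in \widehat{A}} m_\chi \chi$ and $\sum_\chi m_\chi = (\dim \delta_1)^2$. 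The tensor--Hom adjunction together with $(\overline{\delta_1})^{\ast} \simeq \delta_1$ (finite-dimensional unitary) yields $m_\chi = \dim \Hom_A(\delta_1 \otimes \overline{\delta_1}, \chi) = \dim \Hom_A^{1}(\delta_1, \delta_1 \otimes \chi)$. Lifting to $A^{\alpha^{-1}}$ via Corollary \ref{ISOP} and applying Schur's lemma to the irreducible $\alpha^{-1}$-projective representations $\delta_1$ and $\delta_1 \otimes \chi$ shows $m_\chi \in \{0,1\}$, with $m_\chi = 1$ precisely when $\delta_1 \otimes \chi \simeq \delta_1$ linearly. Summing over $\chi$ then gives $\#\{\chi \in \widehat{A} : \delta_1 \otimes \chi \simeq \delta_1\} = (\dim \delta_1)^2$.

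The only delicate pieces are the two cocycle-bookkeeping steps: verifying that $\delta_1 \otimes \chi$ still carries the cocycle $\alpha^{-1}$ so that the bijection of Theorem \ref{cliffordadmissible2}(4) can be re-invoked, and verifying that $\delta_1 \otimes \overline{\delta_1}$ becomes strictly $1$-projective so that the dimension identity $\sum_\chi m_\chi = (\dim\delta_1)^2$ holds without any projective correction. Neither is deep; once they are settled the proof reduces to combining Theorem \ref{cliffordadmissible2}(4), Lemma \ref{CHI}, Corollary \ref{ISOP}, and Schur's lemma.
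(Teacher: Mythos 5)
Your argument is correct, but it is organized differently from the paper's. For the first assertion the paper works inside $\Ind_H^{I_G(\sigma)}\sigma \simeq \sigma\widehat{\otimes}\,\Ind_{H,\alpha^{-1}}^{I_G(\sigma),\alpha^{-1}}\C$ (Lemma \ref{decom}) and embeds $\Ind_{H^{\alpha^{-1}}}^{I_G(\sigma)^{\alpha^{-1}}}\C^{\alpha^{-1}}$ into $\Ind_{H^{\alpha^{-1}}}^{I_G(\sigma)^{\alpha^{-1}}}\delta_1^{\alpha^{-1}} \simeq \delta_1^{\alpha^{-1}}\widehat{\otimes}\,L^2(\frac{I_G(\sigma)}{H})$, so that every irreducible $\alpha^{-1}$-projective representation is caught as a constituent of some $\delta_1\otimes\chi_i$; you instead invoke Lemma \ref{CHI} for the pair $(I_G(\sigma),H)$ and then cancel $\sigma$ via the injectivity of the bijection in Theorem \ref{cliffordadmissible2}(4). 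Both routes are legitimate: yours pushes the analytic content into Lemma \ref{CHI}, whose own proof is the same kind of isotypic-decomposition argument that the paper's embedding replaces. For the counting assertion the two arguments are really the same computation viewed from the two sides of Frobenius reciprocity: the paper computes $m_{I_G(\sigma)^{\alpha^{-1}}}(\Ind_{H^{\alpha^{-1}}}^{I_G(\sigma)^{\alpha^{-1}}}\delta_1^{\alpha^{-1}},\delta_1^{\alpha^{-1}}) = m_{H^{\alpha^{-1}}}(\delta_1^{\alpha^{-1}},\delta_1^{\alpha^{-1}}) = (\dim\delta_1)^2$ and compares it with $\Ind_{H^{\alpha^{-1}}}^{I_G(\sigma)^{\alpha^{-1}}}\delta_1^{\alpha^{-1}} \simeq \widehat{\oplus}_{\chi}\,\delta_1^{\alpha^{-1}}\otimes\chi$, whereas you decompose the ordinary representation $\delta_1\otimes\overline{\delta_1}$ (which is exactly $\Hom_{H^{\alpha^{-1}}}(\delta_1^{\alpha^{-1}},\delta_1^{\alpha^{-1}})$ as a module for $\frac{I_G(\sigma)}{H}$) directly into characters and use Schur's lemma on the lifted group to see each multiplicity is $0$ or $1$. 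One small point you should make explicit: since $\delta_1$ is a priori only a Borel map, $\delta_1\otimes\overline{\delta_1}$ is a Borel homomorphism into a unitary group, and you need Lemma \ref{mor} (or finite-dimensionality) to conclude it is a continuous representation of $\frac{I_G(\sigma)}{H}$ before applying the Peter--Weyl decomposition; this is harmless but belongs in the write-up.
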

\begin{proof}
By Lmm.\ref{decom}, $\Ind_{H}^{I_G(\sigma)} \sigma \simeq \sigma\widehat{\otimes}\Ind_{H, \alpha^{-1}}^{I_G(\sigma),  \alpha^{-1}} \C$.  Let $\delta_1^{\alpha^{-1}}$ be the lifting representation of $I_G(\sigma)^{\alpha^{-1}}$ from $\delta_1$. Note that $H^{\alpha^{-1}}$ is also a normal subgroup of $I_G(\sigma)^{\alpha^{-1}}$, and $\frac{I_G(\sigma)^{\alpha^{-1}}}{H^{\alpha^{-1}}}\simeq \frac{I_G(\sigma)}{H}$. The inclusion  $ \C^{\alpha^{-1}} \hookrightarrow \delta_1^{\alpha^{-1}}$ implies $\Ind_{H^{\alpha^{-1}}}^{I_G(\sigma)^{\alpha^{-1}}} \C^{\alpha^{-1}} \hookrightarrow \Ind_{H^{\alpha^{-1}}}^{I_G(\sigma)^{\alpha^{-1}}}\delta_1^{\alpha^{-1}}$. The right side
$\Ind_{H^{\alpha^{-1}}}^{I_G(\sigma)^{\alpha^{-1}}}\delta_1^{\alpha^{-1}} \simeq \delta_1^{\alpha^{-1}} \widehat{\otimes} \Ind_{H^{\alpha^{-1}}}^{I_G(\sigma)^{\alpha^{-1}}} \C \simeq \delta_1^{\alpha^{-1}} \widehat{\otimes} \Ind_{\frac{H^{\alpha^{-1}}}{H^{\alpha^{-1}}}}^{\frac{I_G(\sigma)^{\alpha^{-1}}}{H^{\alpha^{-1}}}} \C\simeq\delta_1^{\alpha^{-1}} \widehat{\otimes}\Ind_{1}^{\frac{I_G(\sigma)}{H}} \C \simeq \delta_1^{\alpha^{-1}} \widehat{\otimes}L^2(\frac{I_G(\sigma)}{H})  $. Now $\frac{I_G(\sigma)}{H}$ is an abelian  group,  so the first statement follows. Let $\widetilde{\delta_1} =\sigma\otimes \delta_1$. Then $ m_{I_G(\sigma)}(\Ind_{H}^{I_G(\sigma)} \sigma, \widetilde{\delta_1})=m_H(\sigma, \widetilde{\delta_1})=\dim \delta_1$, $m_{I_G(\sigma)^{\alpha^{-1}}} (\Ind_{H^{\alpha^{-1}}}^{I_G(\sigma)^{\alpha^{-1}}} \C^{\alpha^{-1}}, \delta_1^{\alpha^{-1}})= m_{H^{\alpha^{-1}}} ( \C^{\alpha^{-1}}, \delta_1^{\alpha^{-1}})=\dim \delta_1$, $m_{I_G(\sigma)^{\alpha^{-1}}} (\Ind_{H^{\alpha^{-1}}}^{I_G(\sigma)^{\alpha^{-1}}} \delta_1^{\alpha^{-1}}, \delta_1^{\alpha^{-1}})= m_{H^{\alpha^{-1}}} ( \delta_1^{\alpha^{-1}}, \delta_1^{\alpha^{-1}})=(\dim \delta_1)^2$.  Hence the second statement holds.
\end{proof}
\begin{lemma}\label{abeq1}
Keep the notations of Thm.\ref{cliffordadmissible2}.  For $g\in I_{G}(\sigma) $, $\delta_1^g \simeq \delta_1$, and $\sigma^g $  is linearly equivalent to $ \sigma \otimes \chi_g$ as $I_{G}(\sigma)$-modules, for some $\chi_g\in \Irr(\frac{I_{G}(\sigma)}{H})$.
\end{lemma}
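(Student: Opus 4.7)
My plan is to deduce both assertions by direct cocycle computation in the chosen $\alpha$-projective extension of $(\sigma,W)$ from $H$ to $I_G(\sigma)$ provided by Mackey's theorem after Theorem~\ref{cliffordadmissible2}.

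The first assertion is immediate from abelianness. By the standing assumption of this subsection, $\frac{G}{H}$ is abelian, so $\frac{I_G(\sigma)}{H}$ is abelian, and conjugation by any $g\in I_G(\sigma)$ acts trivially on $\frac{I_G(\sigma)}{H}$; hence $\delta_1^g=\delta_1$ literally (not merely up to equivalence), which settles $\delta_1^g\simeq\delta_1$.

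For the second assertion, I take $T:=\sigma(g)\in\mathcal{U}(W)$ as the candidate intertwiner. Writing $\bar a$ for the image of $a\in I_G(\sigma)$ in $\frac{I_G(\sigma)}{H}$, the projective identity $\sigma(a)\sigma(b)=\alpha(\bar a,\bar b)\sigma(ab)$ combined with $\overline{gxg^{-1}}=\bar x$ (a consequence of commutativity) shows first that $\sigma^g$ is again $\alpha$-projective, and then yields
\[
\sigma(g)\sigma(x)\sigma(g)^{-1}
=\frac{\alpha(\bar g,\bar x)}{\alpha(\bar x,\bar g)}\,\sigma^{g}(x).
\]
Setting $\chi_g(x):=\alpha(\bar x,\bar g)/\alpha(\bar g,\bar x)$, this rearranges to $\sigma^g(x)T=T\cdot\chi_g(x)\sigma(x)$, so that $T$ furnishes a linear equivalence $\sigma\otimes\chi_g\simeq\sigma^g$ of $\alpha$-projective $I_G(\sigma)$-modules, granted that $\chi_g$ is a genuine character of $\frac{I_G(\sigma)}{H}$.

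It therefore remains to verify this last point. Triviality of $\chi_g$ on $H$ follows from the normalization of $\alpha$ recorded in the footnote after Definition~\ref{thedePro}. Multiplicativity comes from applying the $2$-cocycle identity $\alpha(a,b)\alpha(ab,c)=\alpha(a,bc)\alpha(b,c)$ successively with $(\bar x,\bar y,\bar g)$ and with $(\bar g,\bar x,\bar y)$, and then using $\bar x\bar g=\bar g\bar x$ to identify the common intermediate factor $\alpha(\bar x\bar g,\bar y)=\alpha(\bar g\bar x,\bar y)$; after cancellation $\chi_g(\bar x\bar y)=\chi_g(\bar x)\chi_g(\bar y)$ drops out. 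This cocycle bookkeeping is the only step requiring care, and I anticipate no conceptual obstacle.
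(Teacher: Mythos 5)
Your proposal is correct and follows essentially the same route as the paper's own proof: the paper also disposes of $\delta_1^g\simeq\delta_1$ by noting $\delta_1$ factors through the abelian quotient, takes $F=\sigma(g)$ as the intertwiner, defines $\chi_g(t)=\alpha(t,g)/\alpha(g,t)$, and verifies multiplicativity by the same cocycle manipulation (your use of $\sigma(g)^{-1}$ in place of $\sigma(g^{-1})$ merely spares you the paper's normalization $\alpha(t,t^{-1})=1$). One small bookkeeping remark: the two cocycle identities you cite leave a residual ratio $\alpha(\bar x,\bar y\bar g)/\alpha(\bar g\bar x,\bar y)$, and closing the computation requires a third application of the cocycle identity with the triple $(\bar x,\bar g,\bar y)$ (this is exactly what produces the factor $\alpha(\bar x\bar g,\bar y)$ you cancel against $\alpha(\bar g\bar x,\bar y)$), after which everything drops out as you anticipate.
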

\begin{proof}
1) Since $\delta_1$ is an $\alpha^{-1}$-projective representation of $\frac{I_{G}(\sigma)}{H}$, for $t\in I_{G}(\sigma)$, $\delta_1^g (t)=\delta_1(gtg^{-1})=\delta_1(t)$.\\
2) Assume that the $2$-cocycle $\alpha$ satisfies $\alpha(t, t^{-1})=1$, for any $t\in I_{G}(\sigma)$. Consequently, $\sigma(t^{-1})=\sigma(t)^{-1}$. For any $g\in I_{G}(\sigma)$, let us define $\chi_g: I_{G}(\sigma)\longrightarrow \C^{\times}; t\longmapsto \alpha(t, g)/\alpha(g, t)$. Then for $t, s\in  I_{G}(\sigma)$,
$$\chi_g(ts)=\frac{\alpha(ts,g)}{\alpha(g,ts)}=\frac{ \alpha(t,sg)\alpha(s,g)/\alpha(t,s)}{ \alpha(g,t)\alpha(gt,s)/\alpha(t,s)}=\frac{ \alpha(t,sg)\alpha(s,g)}{ \alpha(g,t)\alpha(gt,s)}$$
$$=\frac{\alpha(s,g)}{ \alpha(g,t)}\frac{ \alpha(t,gs)}{\alpha(tg,s)}=\frac{\alpha(s,g)}{ \alpha(g,t)}\frac{ \alpha(t,g)}{\alpha(g,s)}=\chi_{g}(s)\chi_{g}(t).$$
Hence $\chi_g\in \Irr(\frac{I_{G}(\sigma)}{H})$. Moreover, let $F=\sigma(g): W \longrightarrow W$. Then $F(\sigma(t)w)=\sigma(g)\sigma(t)\sigma(g^{-1})\sigma(g)w=\sigma(g)\sigma(t)\sigma(g^{-1})F(w)$. Note that
$$\sigma(g)\sigma(t)\sigma(g^{-1})=\sigma(gtg^{-1}) \alpha(g,t) \alpha(gt, g^{-1})=\sigma(gtg^{-1}) \alpha(g,t) \alpha(tg, g^{-1})$$
$$=\sigma(gtg^{-1}) \alpha(g,t) /\alpha(t, g)=\sigma(gtg^{-1})\chi^{-1}_g(t)=\sigma^g(t)\chi^{-1}_g(t).$$
\end{proof}
\begin{lemma}\label{abeq2}
Keep the above notations. For $g\in I_G(\sigma)$, $\widetilde{\sigma}_1^g \simeq \widetilde{\sigma}_1$ iff $\delta_1\otimes \chi_g$  is linearly equivalent to $ \delta_1$ as $I_{G}(\sigma)$-modules
\end{lemma}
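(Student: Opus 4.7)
The plan is to propagate the decomposition $\widetilde{\sigma}_1 \simeq \sigma \otimes \delta_1$ from Theorem \ref{cliffordadmissible2}(3) through $g$-conjugation, and then invoke the bijectivity of $\delta \mapsto \sigma \otimes \delta$ at the end. First I would note that $g$-conjugation commutes with (projective) tensor products: for any $h \in I_G(\sigma)$,
\[
(\sigma \otimes \delta_1)^g(h) = (\sigma \otimes \delta_1)(ghg^{-1}) = \sigma^g(h) \otimes \delta_1^g(h),
\]
and the same linear intertwiner witnessing $\widetilde{\sigma}_1 \simeq \sigma \otimes \delta_1$ simultaneously witnesses $\widetilde{\sigma}_1^g \simeq \sigma^g \otimes \delta_1^g$, since intertwining the function values at $ghg^{-1}$ for all $h$ is the same condition.

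Next, I invoke Lemma \ref{abeq1}. Because the standing assumption of this subsection is that $G/H$ is abelian, the subgroup $I_G(\sigma)/H$ is also abelian, so both assertions of Lemma \ref{abeq1} are available: $\delta_1^g \simeq \delta_1$ linearly, and $\sigma^g$ is linearly equivalent to $\sigma \otimes \chi_g$ as $\alpha$-projective $I_G(\sigma)$-modules for the explicit character $\chi_g \in \Irr(I_G(\sigma)/H)$. Tensoring these two linear equivalences produces
\[
\widetilde{\sigma}_1^g \simeq \sigma^g \otimes \delta_1^g \simeq (\sigma \otimes \chi_g) \otimes \delta_1 \simeq \sigma \otimes (\delta_1 \otimes \chi_g),
\]
and the cocycles $\alpha$ and $\alpha^{-1}$ cancel so that the final object is an ordinary (linear) representation of $I_G(\sigma)$.

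Finally, I apply the bijectivity of $\sigma \otimes (-) : \mathcal{P}\Irr_{\alpha^{-1}}(I_G(\sigma)/H) \to \mathcal{R}_{I_G(\sigma)}(\Ind_H^{I_G(\sigma)} \sigma)$ from Theorem \ref{cliffordadmissible2}(4). Since $\chi_g$ is a linear character of $I_G(\sigma)/H$, the twist $\delta_1 \otimes \chi_g$ is again an irreducible $\alpha^{-1}$-projective representation of $I_G(\sigma)/H$, so the bijectivity gives
\[
\sigma \otimes \delta_1 \simeq \sigma \otimes (\delta_1 \otimes \chi_g) \iff \delta_1 \simeq \delta_1 \otimes \chi_g,
\]
both understood as linear equivalences. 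Combining with the display above yields $\widetilde{\sigma}_1^g \simeq \widetilde{\sigma}_1 \iff \delta_1 \otimes \chi_g \simeq \delta_1$, which is the claim. The only delicate point in the argument is keeping track of linear versus merely projective equivalence at each identification; once one is careful about this, the proof reduces to a short chain of manipulations with no substantial obstacle.
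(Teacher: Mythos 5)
Your proof is correct and follows essentially the same route as the paper: decompose $\widetilde{\sigma}_1^g \simeq \sigma^g\otimes\delta_1^g$, use Lemma \ref{abeq1} to rewrite this as $\sigma\otimes(\delta_1\otimes\chi_g)$, and then conclude via the bijectivity of $\sigma\otimes(-)$ in Theorem \ref{cliffordadmissible2}(4). Your version is simply more explicit about the linear-versus-projective bookkeeping, which the paper leaves implicit.
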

\begin{proof}
 Note that $\widetilde{\sigma}_1^g=\sigma^g\otimes \delta_1^g \simeq (\sigma\otimes \chi_g)\otimes \delta_1 \simeq \widetilde{\sigma}_1 \simeq \sigma\otimes \delta_1 $.
 By Thm.\ref{cliffordadmissible2}(4), $\chi_g\otimes \delta_1 \simeq  \delta_1$, linear isomorphism. By  Thm.\ref{cliffordadmissible2}(4), the converse also holds.
\end{proof}

For $\pi \in \widehat{G}$, assume   $\sigma\in \mathcal{R}_{H}(\pi)$, and $\pi\simeq \Ind_{I_{G}(\sigma)}^G \widetilde{\sigma}$, $\widetilde{\sigma}\simeq \sigma\otimes \delta$ as projective representations. Let $f=[G: I_{G}(\sigma)]$, $e=\dim(\delta)$.
\begin{lemma}\label{eqeq}
If $ \pi \simeq \pi\otimes \chi$, for any $\chi\in \widehat{(\frac{G}{H})}$, then:
\begin{itemize}
\item[(1)]  $[I_G(\sigma):H]=e^2$,   $[G:H]=e^2f$,
\item[(2)] $\Ind_H^{I_G(\sigma)} \sigma\simeq e \widetilde{\sigma}$.
\end{itemize}
\end{lemma}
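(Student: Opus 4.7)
The plan is to combine the hypothesis with Lemma \ref{CHI} to collapse $\mathcal{R}_G(\Ind_H^G\sigma)$ to a single equivalence class, transport this conclusion through the bijections of Theorem \ref{cliffordadmissible2}(2),(4) to show that $\mathcal{P}Irr_{\alpha^{-1}}(I_G(\sigma)/H) = \{\delta\}$, and then read off the two numerical claims from Lemma \ref{CHI2} together with Pontryagin duality; part (2) will then come out of a dimension count.

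First I would observe that $\pi \in \mathcal{R}_G(\Ind_H^G\sigma)$, since $\sigma \in \mathcal{R}_H(\pi)$ by construction and Frobenius reciprocity. By Lemma \ref{CHI}, every $\pi' \in \mathcal{R}_G(\Ind_H^G \sigma)$ is of the form $\pi \otimes \chi$ for some $\chi \in \widehat{(G/H)}$; the hypothesis $\pi \simeq \pi \otimes \chi$ for all such $\chi$ then forces $\pi' \simeq \pi$, so $\mathcal{R}_G(\Ind_H^G\sigma) = \{\pi\}$. The bijection in Theorem \ref{cliffordadmissible2}(2) then yields $\mathcal{R}_{I_G(\sigma)}(\Ind_H^{I_G(\sigma)}\sigma) = \{\widetilde{\sigma}\}$, and the bijection in Theorem \ref{cliffordadmissible2}(4) yields $\mathcal{P}Irr_{\alpha^{-1}}(I_G(\sigma)/H) = \{\delta\}$.

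Next I would invoke Lemma \ref{CHI2}: since the set $\{\delta \otimes \chi_i : \chi_i \in \widehat{(I_G(\sigma)/H)}\}$ collapses to $\{\delta\}$, every character $\chi$ must satisfy $\delta \otimes \chi \simeq \delta$, so the count in Lemma \ref{CHI2} gives $|\widehat{(I_G(\sigma)/H)}| = e^2$. Because $I_G(\sigma)/H$ is a compact abelian group (being a subquotient of the compact abelian $G/H$) with finite Pontryagin dual of order $e^2$, Pontryagin duality forces it to be finite of order $e^2$. Thus $[I_G(\sigma):H] = e^2$ and $[G:H] = [G:I_G(\sigma)]\cdot[I_G(\sigma):H] = fe^2$, settling part~(1). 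For part~(2), a dimension count closes the argument: $\dim\Ind_H^{I_G(\sigma)}\sigma = [I_G(\sigma):H]\dim\sigma = e^2 \dim\sigma = e \dim\widetilde{\sigma}$, and since $\widetilde{\sigma}$ is the unique irreducible constituent of $\Ind_H^{I_G(\sigma)}\sigma$, this forces $\Ind_H^{I_G(\sigma)}\sigma \simeq e\widetilde{\sigma}$.

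The only real subtlety is the Pontryagin-duality step that converts a finite dual of cardinality $e^2$ into finiteness of $I_G(\sigma)/H$ itself of the same order; this uses that $I_G(\sigma)$ is open in the compact $G$ and hence $I_G(\sigma)/H$ is indeed a compact abelian group, so its dual is discrete and counts its order. Everything else is an essentially formal consequence of Theorem \ref{cliffordadmissible2} and Lemmas \ref{CHI}, \ref{CHI2}.
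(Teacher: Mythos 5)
Your proof is correct and follows essentially the same route as the paper: collapse $\mathcal{R}_G(\Ind_H^G\sigma)$ to $\{\pi\}$, push this through the Clifford--Mackey bijections of Theorem \ref{cliffordadmissible2} to get $\mathcal{P}Irr_{\alpha^{-1}}(\frac{I_G(\sigma)}{H})=\{\delta\}$, and read off $[I_G(\sigma):H]=e^2$ from the character count in Lemma \ref{CHI2}. The only cosmetic differences are that the paper derives the initial collapse from $\Ind_H^G\Res_H^G\pi\simeq\pi\widehat{\otimes}L^2(\frac{G}{H})$ rather than from Lemma \ref{CHI}, and settles part (2) via Frobenius reciprocity ($m_{I_G(\sigma)}(\widetilde{\sigma},\Ind_H^{I_G(\sigma)}\sigma)=m_H(\widetilde{\sigma},\sigma)=e$) instead of your dimension count; both variants are fine, and your explicit Pontryagin-duality step is a welcome clarification of a point the paper leaves implicit.
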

\begin{proof}
  Under the hypothesis, $\Ind_{H}^G \pi \simeq \pi \widehat{\otimes} \Ind_{H}^G \C\simeq \pi\widehat{\otimes} \Ind_{1}^{\frac{G}{H}} \C\simeq \pi \widehat{\otimes} L^2(\frac{G}{H})$. Hence $\mathcal{R}_{G}(\Ind_H^G \pi)=\{\pi\otimes \chi\}=\{\pi\}$. So $\mathcal{R}_{G}(\Ind_{H}^G \sigma)=\{\pi\}$.  By Thm.\ref{cliffordadmissible2}, $\mathcal{R}_{I_{G}(\sigma)}(\Ind_H^{I_G(\sigma)} \sigma)=\{\widetilde{\sigma}\}$. Thus, $\#\frac{I_G(\sigma)}{H}=(\dim \delta)^2$ by the above lemma \ref{CHI2}. Hence $\# \frac{G}{H}=fe^2<+\infty$. Moreover, $m_H(\widetilde{\sigma}, \sigma)=m_{I_G(\sigma)}(\widetilde{\sigma}, \Ind_H^{I_G(\sigma)}\sigma)=e$.
\end{proof}
 Let $(\pi, V)\in \widehat{G}$.  For $(\sigma_1,W_1), ( \sigma_2, W_2)\in \mathcal{R}_{H}(\pi)$, $I_G(\sigma_1)=I_G(\sigma_2)$.
\begin{lemma}
The $2$-cocyles $\alpha_i(-,-)$ associated to $\sigma_1$, $\sigma_2$ respectively  as in Thm.\ref{cliffordadmissible2}, can be chosen to be  the same cocycle.
\end{lemma}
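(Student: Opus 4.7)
The plan is to produce the desired $\alpha_1$-projective extension of $\sigma_2$ by transporting a chosen $\alpha_1$-projective extension of $\sigma_1$ along the $G$-conjugacy $\sigma_2\simeq\sigma_1^{g_0}$, and then to observe that the abelianness of $\frac{G}{H}$ forces the pulled-back cocycle to coincide with $\alpha_1$ on the nose, not merely up to coboundary.

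First, since $\frac{G}{H}$ is abelian every subgroup of $G$ containing $H$ is normal in $G$; in particular $I_G(\sigma_1)\trianglelefteq G$, which reconfirms $I_G(\sigma_2)=g_0^{-1}I_G(\sigma_1)g_0=I_G(\sigma_1)$ for any $g_0\in G$ with $\sigma_2\simeq\sigma_1^{g_0}$. Moreover, since conjugation by $\bar g_0$ is trivial on the abelian group $\frac{G}{H}$, the inner automorphism $c_{g_0}:x\mapsto g_0xg_0^{-1}$ of $I_G(\sigma_1)$ satisfies $g_0xg_0^{-1}\in xH$ for every $x\in I_G(\sigma_1)$. By Thm.\ref{cliffordadmissible1}(2) I fix such a $g_0$ together with a unitary $H$-intertwiner $U:W_1\to W_2$ satisfying $U\,\sigma_1(g_0hg_0^{-1})\,U^{-1}=\sigma_2(h)$ for every $h\in H$, and fix any $\alpha_1$-projective extension $\widetilde{\sigma}_1^{\mathrm{proj}}:I_G(\sigma_1)\to\mathcal{U}(W_1)$ of $\sigma_1$ as furnished by Thm.\ref{cliffordadmissible2}(3).

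Next, I define
$$\widetilde{\sigma}_2^{\mathrm{proj}}(x):=U\,\widetilde{\sigma}_1^{\mathrm{proj}}(g_0xg_0^{-1})\,U^{-1},\qquad x\in I_G(\sigma_2)=I_G(\sigma_1).$$
For $h\in H$ the normality of $H$ together with the defining property of $U$ gives $\widetilde{\sigma}_2^{\mathrm{proj}}(h)=U\,\sigma_1(g_0hg_0^{-1})\,U^{-1}=\sigma_2(h)$, so $\widetilde{\sigma}_2^{\mathrm{proj}}$ extends $\sigma_2$. A direct computation with the $\alpha_1$-cocycle identity for $\widetilde{\sigma}_1^{\mathrm{proj}}$ then shows that $\widetilde{\sigma}_2^{\mathrm{proj}}$ is a projective representation with cocycle
$$\alpha_2(x_1,x_2)\;=\;\alpha_1(g_0x_1g_0^{-1},\,g_0x_2g_0^{-1}).$$

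Finally, because $\alpha_1$ descends to a cocycle on $\frac{I_G(\sigma_1)}{H}$ and $g_0x_ig_0^{-1}$ lies in the coset $x_iH$ by the abelianness observation of the first paragraph, the right-hand side equals $\alpha_1(x_1,x_2)$. Hence $\alpha_2=\alpha_1$ and $\widetilde{\sigma}_2^{\mathrm{proj}}$ is an $\alpha_1$-projective extension of $\sigma_2$, as required. The entire argument is a direct transport of structures; there is essentially no analytic obstacle, and the only substantive input is the identity $\alpha_1(g_0xg_0^{-1},g_0yg_0^{-1})=\alpha_1(x,y)$, which is immediate from the fact that $c_{g_0}$ is the identity on the abelian quotient $\frac{I_G(\sigma_1)}{H}$.
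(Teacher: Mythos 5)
Your argument is correct and takes essentially the same route as the paper: transport an $\alpha_1$-projective extension of $\sigma_1$ along conjugation by $g_0$, compute that the resulting cocycle is $\alpha_1(g_0x_1g_0^{-1},g_0x_2g_0^{-1})$, and use that conjugation is trivial on the abelian quotient $\frac{G}{H}$ so that this equals $\alpha_1(x_1,x_2)$ since $\alpha_1$ factors through $\frac{I_G(\sigma_1)}{H}$. The only (harmless) difference is that you carry the unitary intertwiner $U$ explicitly, whereas the paper shortcuts by assuming $\sigma_2=\sigma_1^{g}$ on the nose.
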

\begin{proof}
Assume $\sigma_2\simeq \sigma_1^{g}$, for some $g\in G$. For simplicity, assume they are equal. For $h_1, h_2\in I_G(\sigma_1)$, $\sigma_1^g(h_1)\sigma_1^g(h_2)=\sigma_1(gh_1g^{-1})\sigma_1(gh_2g^{-1})=\alpha_1(gh_1g^{-1}, gh_2g^{-1})\sigma_1^g(h_1h_2)$. Notice that $\alpha_1$ is defined from $\frac{I_{G}(\sigma_1)}{H} \times \frac{I_{G}(\sigma_1)}{H} $ to $\T$. In our case, $\frac{I_{G}(\sigma_1)}{H}$ and $ \frac{G}{H}$ both are abelian groups. Hence $\alpha_1(gh_1g^{-1}, gh_2g^{-1})=\alpha_1(h_1,h_2)$.
\end{proof}


\section{Theta representations of compact groups}\label{thetacompact}
   \subsection{$\rho$-isotypic component} For  a (unitary) representation $(\pi, V)$ of $G$, $\rho \in \widehat{G}$, we let $V_{\rho}$  denote the $\rho$-isotypic component of $V$. For  a countable family of Banach spaces $\{ B_i\}$, we use the following notions(cf. \cite[p.72]{Co1}):
 \begin{itemize}
 \item[(1)] $l^{\infty}(B_i)=\{ (\varphi_i) \in \prod_{i} B_i \mid \sup_i \|\varphi_i\| <+\infty\}$;
 \item[(2)]  $l^{p}(B_i)=\{ (\varphi_i)  \in \prod_{i} B_i \mid (\sum_i \|\varphi_i\|^p)^{\frac{1}{p}} <+\infty\}$.
\end{itemize}
 \begin{lemma}\label{ISOM}
  For $(\pi_i, V_i)\in \Rep(G)$,
\begin{itemize}
\item[(1)] $\Hom_G(V_1, V_2) \simeq l^{\infty}(B_{\rho})$, for   $B_{\rho}=\Hom_{G}(V_{1, \rho}, V_{2,\rho})$, $\rho\in \widehat{G}$,
\item[(2)]  $\HS-\Hom_G(V_1, V_2) \simeq l^{2}(H_{ \rho})$, for $H_{\rho}=\HS-\Hom_{G}(V_{1, \rho}, V_{2,\rho})$, $\rho\in \widehat{G}$.
\end{itemize}
\end{lemma}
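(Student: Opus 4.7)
The plan is to carry out both parts in parallel by exploiting the canonical Hilbert direct sum decomposition $V_i=\widehat{\oplus}_{\rho\in\widehat{G}}V_{i,\rho}$ that any unitary representation of a compact group admits (Peter--Weyl), together with the observation that distinct isotypic components are mutually orthogonal and $G$-morphisms respect this decomposition.

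First I would show that any $T\in\Hom_G(V_1,V_2)$ splits canonically into its components on each isotypic block. Given $\rho\in\widehat{G}$, since $V_{1,\rho}$ is the $\rho$-isotypic part, $T(V_{1,\rho})$ is a $G$-stable subspace of $V_2$ whose irreducible constituents are all isomorphic to $\rho$; hence $T(V_{1,\rho})\subseteq V_{2,\rho}$, yielding a family $T_\rho=T|_{V_{1,\rho}}\in B_\rho$. Conversely, any bounded/HS family $(T_\rho)$ assembles via $T(\sum v_\rho)=\sum T_\rho(v_\rho)$, provided convergence is justified. Thus the map $T\mapsto(T_\rho)$ is a well-defined linear bijection onto the relevant subspace of $\prod_\rho B_\rho$; what remains is to identify the correct norm.

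For part (1), because $V_2=\widehat{\oplus}_\rho V_{2,\rho}$ is an orthogonal direct sum, for any $v=\sum v_\rho\in V_1$ we have $\|T(v)\|^2=\sum_\rho\|T_\rho(v_\rho)\|^2\le\bigl(\sup_\rho\|T_\rho\|\bigr)^2\sum_\rho\|v_\rho\|^2=\bigl(\sup_\rho\|T_\rho\|\bigr)^2\|v\|^2$, so $\|T\|\le\sup_\rho\|T_\rho\|$; on the other hand testing on vectors supported in a single $V_{1,\rho}$ gives $\|T_\rho\|\le\|T\|$ for each $\rho$. This identifies $\Hom_G(V_1,V_2)$ isometrically with $l^\infty(B_\rho)$, and simultaneously shows that any element of $l^\infty(B_\rho)$ does define a bounded operator (so the assembly map is surjective).

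For part (2), pick an orthonormal basis of $V_1$ obtained by concatenating orthonormal bases $\{e_j^\rho\}$ of each $V_{1,\rho}$. Since the images $T(e_j^\rho)=T_\rho(e_j^\rho)$ lie in mutually orthogonal subspaces of $V_2$, the Hilbert--Schmidt norm computes as $\|T\|_{\HS}^2=\sum_\rho\sum_j\|T_\rho(e_j^\rho)\|^2=\sum_\rho\|T_\rho\|_{\HS}^2$, which is exactly the $l^2$-identification. Conversely, given $(T_\rho)\in l^2(H_\rho)$, the partial sums $\sum_{\rho\in F}T_\rho$ (over finite $F\subseteq\widehat{G}$) form a Cauchy net in the Hilbert--Schmidt norm, hence converge to an HS operator $T$ with the prescribed components. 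The main point requiring care is this convergence/surjectivity statement: it is essentially the content of orthogonality of isotypic components plus completeness of the HS ideal, which is the only nontrivial ingredient; everything else is a routine application of Peter--Weyl.
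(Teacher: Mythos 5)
Your proposal is correct and follows essentially the same route as the paper: both decompose $V_2$ (resp.\ $V_1$) into isotypic components, observe that a $G$-morphism respects this decomposition (your restriction $T|_{V_{1,\rho}}$ landing in $V_{2,\rho}$ is the same map as the paper's $P_\rho\circ\varphi$ under the identification $\Hom_G(V_1,V_{2,\rho})\simeq\Hom_G(V_{1,\rho},V_{2,\rho})$), and then match norms via the orthogonality of the blocks, with the converse assembly argument handled by the same $\sup$/$\ell^2$ estimates. Your part (2) is in fact slightly more explicit than the paper's, which only says ``similar to the above proof.''
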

\begin{proof}
1) Let $P_{\rho}$ be the projection from $V_2$ to $V_{2, \rho}$. Then for any $\varphi\in \Hom_G(V_1, V_2)$, $P_{\rho} \circ \varphi \in \Hom_G(V_1, V_{2,\rho})\simeq \Hom_G(V_{1, \rho}, V_{2,\rho})$. Moreover, $\|P_{\rho} \circ \varphi\|\leq \|\varphi\|$, so $(P_{\rho} \circ \varphi)_{\rho\in \widehat{G}} \in l^{\infty}(B_{\rho})$.
 Conversely, for any $(\phi_{\rho})\in l^{\infty}(B_{\rho})$, $v=\sum_{\rho\in \widehat{G}} v_{\rho}\in V_1$, we can define $\phi(v)=\sum_{\rho\in \widehat{G}}\phi_{\rho}(v_{\rho})$. Clearly, $ \|\phi(v)\|^2=\sum_{\rho\in \widehat{G}}\|\phi_{\rho}(v_{\rho})\|^2\leq (\sup_{\rho} \|\phi_{\rho}\|^2 )\|v\|^2=(\sup_{\rho} \|\phi_{\rho}\|)^2 \|v\|^2$. Hence $\phi \in \Hom_{G}(V_1, V_2)$. Moreover, the maps
 $$P: \Hom_G(V_1, V_2) \longrightarrow l^{\infty}(B_{\rho}); \varphi \longmapsto (P_{\rho} \circ \varphi),$$
 $$S: l^{\infty}(B_{\rho})  \longrightarrow \Hom_G(V_1, V_2); (\phi_{\rho})\longmapsto  \phi=\sum \phi_{\rho}$$
 are converse to each other, and $\|P\|\leq 1$, $\|Q\|\leq 1$, so   the two Banach spaces are isometric.\\
 2) Similar to the above proof.
\end{proof}
\subsection{Waldspurger's Lemmas on local radicals}
 The $p$-adic version of the next two results  came from \cite[pp.45-47]{MVW}.
\begin{lemma}\label{waldspurger1}
For $(\pi_1, V_1)\in \widehat{G_1}$, $(\pi_2, V_2)\in \Rep(G_2)$,  if a closed vector subspace  $W $  of $ V_1 \widehat{\otimes} V_2$ is  $G_1 \times G_2$-invariant, then there is a unique(up to unitarily equivalent) $G_2$-invariant closed subspace $V_2'$ of $V_2$ such that $W \simeq V_1 \widehat{\otimes} V_2'=V_1\otimes V_2'$.
\end{lemma}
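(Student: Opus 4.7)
The plan is to exploit two features of the compact setting: first, since $\pi_1 \in \widehat{G_1}$ and $G_1$ is compact, by Peter--Weyl the space $V_1$ is finite-dimensional; second, by Burnside's density theorem the linear span of $\pi_1(G_1)$ in $\End(V_1)$ is all of $\End(V_1)$. These two facts together should let us reduce the statement to a finite-dimensional ``coordinate'' calculation.

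Fix an orthonormal basis $e_1,\dots,e_n$ of $V_1$, so that every element of $V_1\widehat{\otimes} V_2$ has a unique representation $\sum_{i=1}^n e_i\otimes v_i$ with $v_i\in V_2$. For each $i$, define the continuous linear ``coordinate'' map $\phi_i:V_1\widehat{\otimes} V_2\to V_2$ by $\phi_i\bigl(\sum_j e_j\otimes v_j\bigr)=v_i$; equivalently, $\phi_i=\langle\,\cdot\,,e_i\rangle\otimes \Id_{V_2}$. Since $\phi_i$ is $G_2$-equivariant and $W$ is closed and $G_2$-invariant, each $\phi_i(W)$ is a $G_2$-invariant subspace of $V_2$, and therefore
\[
V_2':=\overline{\spann\bigl\{\phi_i(w)\mid w\in W,\ 1\le i\le n\bigr\}}
\]
is a closed, $G_2$-invariant subspace of $V_2$. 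By construction every $w\in W$ lies in $V_1\otimes V_2'$, giving $W\subseteq V_1\otimes V_2'$.

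The reverse containment is the substantive step, and it is here that $G_1$-invariance and irreducibility of $\pi_1$ are used. Given $w=\sum_i e_i\otimes v_i\in W$, $G_1$-invariance yields $(\pi_1(g)\otimes \Id)w\in W$ for every $g\in G_1$, hence $(A\otimes \Id)w\in W$ for every $A$ in the linear span of $\pi_1(G_1)$ in $\End(V_1)$. Burnside density (applicable because $V_1$ is finite-dimensional and $\pi_1$ is irreducible) says this span is all of $\End(V_1)$, so we may take $A$ to be any matrix unit $E_{jk}$ in the basis $\{e_i\}$; this gives $e_j\otimes v_k\in W$ for all $j,k$. Consequently $V_1\otimes \phi_k(W)\subseteq W$ for each $k$, and passing to closed linear spans yields $V_1\otimes V_2'\subseteq W$. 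Since $V_1$ is finite-dimensional, the algebraic tensor product $V_1\otimes V_2'$ already coincides with $V_1\widehat{\otimes} V_2'$, and we conclude $W=V_1\widehat{\otimes} V_2'=V_1\otimes V_2'$.

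For uniqueness, suppose $W=V_1\widehat{\otimes} V_2''$ for another closed $G_2$-invariant $V_2''\subseteq V_2$. Applying each coordinate map $\phi_i$ to both sides gives $\phi_i(W)=V_2''$, so the closed span defining $V_2'$ coincides with $V_2''$. The main (and really the only) obstacle is the Burnside/density step in the reverse containment; everything else is either Peter--Weyl or a routine manipulation of coordinate maps, facilitated by the finite-dimensionality of $V_1$.
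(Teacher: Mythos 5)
Your existence argument is correct and is essentially the paper's own proof: both exploit that $\dim V_1<\infty$, decompose $V_1\widehat{\otimes}V_2=\oplus_i e_i\otimes V_2$ along an orthonormal basis, and use the fact that the image of $\C[G_1]$ under an irreducible finite-dimensional $\pi_1$ contains all matrix units (the paper phrases this via Schur's Lemma and elements $\epsilon_j$ with $\pi_1(\epsilon_j)e_{i_k}=\delta_{jk}e_{i_j}$, you phrase it via Burnside density; these are the same fact) to show that $W$ splits as $V_1\otimes V_2'$. The only substantive difference is cosmetic: the paper defines $V_2'$ intrinsically as $\{v_2\mid v_1\otimes v_2\in W \text{ for some } v_1\neq 0\}$ and deduces closedness afterwards, whereas you define it as a closed span of coordinate images; both yield the same subspace.

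The one genuine shortfall is in the uniqueness step. The lemma asserts uniqueness up to unitary equivalence among closed $G_2$-invariant subspaces $V_2''$ with $W\simeq V_1\widehat{\otimes}V_2''$, i.e.\ with an abstract $G_1\times G_2$-isomorphism, not an equality of subspaces of $V_1\widehat{\otimes}V_2$. Your argument only treats the case $W=V_1\widehat{\otimes}V_2''$, where applying the coordinate maps gives literal equality $V_2''=V_2'$; it does not address the case where $V_2''$ sits elsewhere in $V_2$ and $W\simeq V_1\widehat{\otimes}V_2''$ only via some unitary $G_1\times G_2$-map $F$. The paper closes this by using Schur's Lemma on the first factor to show that any such $F$ must have the form $1\otimes\varphi$ for a unitary $G_2$-isomorphism $\varphi:V_2'\to V_2''$ (checking that the vectors $f_j'$ determined by $F(v_1\otimes f_j)=v_1\otimes f_j'$ form an orthonormal system). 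You would need to add this step, or an equivalent multiplicity-counting argument, to obtain the uniqueness statement actually claimed.
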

\begin{proof}
Assume $W\neq 0$. Let $\{ e_1, \cdots, e_n\}$  be an orthonormal basis of $V_1$. Since $V_1$ has finite dimension,  $ V_1 \widehat{\otimes} V_2 =\oplus_{i=1}^n e_i \otimes V_2$. Let $V_2'=\{ v_2\in V_2\mid \exists v_1\neq 0, \textrm{ such that  } v_1\otimes v_2\in W\}$.  If $0\neq v_1\otimes v_2\in W$,  then $\pi_1(\mathbb{C}[G_1])v_1\otimes \pi_2(\mathbb{C}[G_2])v_2\subseteq W$, so $V_1\otimes v_2\subseteq W$. For $v_2', v_2 \in V_2'$, $c_1, c_2\in \mathbb{C}$,   $V_1 \otimes (c_1v_2'+c_2v_2) \subseteq V_1\otimes c_1v_2'+V_1\otimes c_2v_2\subseteq W$, so  $c_1v_2'+c_2v_2\in V_2'$.  Moreover, for $g_2\in G_2$, $V_1\otimes g_2v_2 \subseteq  W$. Hence $V_2'$ is a $G_2$-invariant vector subspace of $V_2$.  If  $0\neq v=\sum_{k=1}^m e_{i_k}\otimes w_k \in W$, for some $w_k\neq 0$, by Schur's Lemma, there exists $\epsilon_{j}\in \mathbb{C}[G_1]$ such that $\pi_1(\epsilon_{j}) e_{i_k}=\delta_{jk} e_{i_j}$, for $1\leq k\leq m$. Then $[\pi_1(\epsilon_{j}) \otimes \pi_2(1_{G_2})](v)=e_{i_j}\otimes w_j\in W$, which implies $w_j\in V_2'$. Hence $v\in V_1\otimes V_2'$, and $W=V_1\otimes V_2'=\oplus_{i=1}^n e_i\otimes V_2'$, which implies that $V_2'$ is a closed subspace of $V_2$.

If there exists another  $G_2$-invariant closed subspace $V_2''$ of $V_2$ such that $W \simeq V_1 \widehat{\otimes} V_2''$. Let $F$ be a unitary $G_1\times G_2$-isomorphism from $V_1\otimes V_2'$ to $V_1\otimes V_2''$. Let $\{ f_j\mid j\geq 1\}$ be an orthonormal basis of $V_2'$. By Schur's Lemma, $\exists $ a unique non-zero element $f_j'\in V_2''$, such that $F(v_1\otimes f_j)=v_1\otimes f_j'$, for any $v_1\in V_1$.  Hence $1=\|e_i \otimes f_j \|=\|F(e_i \otimes f_j) \|=\|e_i\otimes f_j'\|=\|f_j'\|$. For different $j,l$,  $0=\langle e_i\otimes f_j, e_i\otimes f_l\rangle= \langle F(e_i\otimes f_j), F( e_i\otimes f_l)\rangle=\langle e_i\otimes f'_j, e_i\otimes f'_l\rangle=\langle  f'_j, f'_l\rangle$.  If $\sum_{k}c_k f_k \neq 0$, then $F(e_i\otimes \sum_{k}c_k f_k)=e_i\otimes \sum_{k}c_k f'_k\neq 0$, which implies that those $ f'_k$ are linearly independent.  Hence $f_j \longrightarrow f_j'$, defines a unitary  linear map from $ V_2' $ to $ V_2''$, say $\varphi$. Then $F=1\otimes \varphi$, which implies that $\varphi$ is a $G_2$-isomorphism.
\end{proof}
\begin{lemma}\label{waldspurger2}
For  $(\pi_1, V_1) \in \widehat{G_1}$, $(\pi, V)\in \Rep(G_1 \times G_2)$. Suppose that $\cap \ker(\varphi)=0$ for all $\varphi\in \Hom_{G_1}(V, V_1)$. Then there is a unique(up to unitarily equivalent)  representation  $(\pi_2', V_2')$ of $G_2$ such that $\pi \simeq \pi_1 \widehat{\otimes} \pi_2'=\pi_1 \otimes \pi_2'$.
\end{lemma}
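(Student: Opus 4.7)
The plan is to realize $V_2'$ as a suitable space of $G_1$-intertwiners and then produce the decomposition via an evaluation map. Set $V_2' := \Hom_{G_1}(V_1, V)$; since $V_1$ is finite-dimensional (being an irreducible unitary representation of the compact group $G_1$) every bounded $G_1$-intertwiner $V_1\to V$ is automatically Hilbert--Schmidt. By Schur's lemma, for any $\varphi_1, \varphi_2 \in V_2'$ the composition $\varphi_2^{\ast}\varphi_1 \in \End_{G_1}(V_1)$ is a scalar multiple of the identity, and defining $\langle \varphi_1, \varphi_2 \rangle_{V_2'}$ by $\varphi_2^{\ast}\varphi_1 = \langle \varphi_1, \varphi_2 \rangle_{V_2'} \cdot \id_{V_1}$ endows $V_2'$ with a Hilbert space structure (proportional to the Hilbert--Schmidt inner product). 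Because $G_1$ and $G_2$ commute in the product, the formula $\pi_2'(g_2)\varphi := \pi(1, g_2) \circ \varphi$ defines a well-defined, strongly continuous unitary representation of $G_2$ on $V_2'$.

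Next, I would introduce the evaluation map $T \colon V_1 \widehat{\otimes} V_2' \longrightarrow V$, $v_1 \otimes \varphi \longmapsto \varphi(v_1)$. A direct computation on pure tensors yields $\langle T(v_1 \otimes \varphi_1), T(w_1 \otimes \varphi_2)\rangle_V = \langle \varphi_2^{\ast}\varphi_1(v_1), w_1\rangle_{V_1} = \langle \varphi_1, \varphi_2 \rangle_{V_2'}\,\langle v_1, w_1\rangle_{V_1}$, which matches the inner product on $V_1 \widehat{\otimes} V_2'$, so $T$ extends to an isometry of Hilbert spaces. Its equivariance under $G_1 \times G_2$ is immediate: for $\varphi \in V_2'$ one has $T((g_1,1)(v_1\otimes \varphi)) = \varphi(\pi_1(g_1)v_1) = \pi(g_1,1)\varphi(v_1)$ by $G_1$-equivariance of $\varphi$, and $T((1,g_2)(v_1\otimes \varphi)) = (\pi_2'(g_2)\varphi)(v_1) = \pi(1,g_2)\varphi(v_1)$ by the definition of $\pi_2'$.

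The main step is surjectivity of $T$, and this is where the hypothesis enters. Since $T$ is isometric, $\Im T$ is a closed subspace of $V$, so it suffices to show that it is dense. Taking orthogonal complements,
$(\Im T)^{\perp} = \Bigl(\sum_{\varphi\in V_2'} \varphi(V_1)\Bigr)^{\perp} = \bigcap_{\varphi\in V_2'} \ker(\varphi^{\ast}) = \bigcap_{\psi\in \Hom_{G_1}(V, V_1)} \ker(\psi) = 0$,
where the second equality uses the adjoint relation $\langle \varphi(v_1), v\rangle_V = \langle v_1, \varphi^{\ast}(v)\rangle_{V_1}$, the third uses that $\varphi \longmapsto \varphi^{\ast}$ is a bijection between $\Hom_{G_1}(V_1, V)$ and $\Hom_{G_1}(V, V_1)$ (valid because $V_1$ is finite-dimensional, so all bounded intertwiners are adjointable and the correspondence is $G_1$-equivariant on both sides), and the final equality is precisely the hypothesis. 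Hence $T$ is a surjective isometry, i.e. a unitary $G_1\times G_2$-isomorphism $V_1 \widehat{\otimes} V_2' \simeq V$.

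For uniqueness, if $\pi \simeq \pi_1 \widehat{\otimes} \pi_2''$ for some other $(\pi_2'', V_2'') \in \Rep(G_2)$, then applying $\Hom_{G_1}(V_1, -)$ to both sides and invoking Schur's lemma recovers $V_2'' \simeq \Hom_{G_1}(V_1, V) = V_2'$ canonically as a unitary $G_2$-module. I expect the most delicate technical point to be the isometry computation for $T$ with the Schur-normalized inner product: the scalars must match precisely on pure tensors, and one should check that this normalization differs from the ambient Hilbert--Schmidt norm only by the positive factor $\dim V_1$, so that $V_2'$ is genuinely the correct Hilbert space on which $G_2$ acts unitarily and the closure taken in $V_1 \widehat{\otimes} V_2'$ agrees with the one obtained from the Hilbert--Schmidt completion.
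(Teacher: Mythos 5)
Your construction is correct, and it starts from the same object as the paper: $V_2'=\Hom_{G_1}(V_1,V)$ with (a positive multiple of) the Hilbert--Schmidt inner product, acted on by $G_2$ through $\pi(1,g_2)\circ(-)$, together with the evaluation map. Where you genuinely diverge is in how bijectivity and unitarity are obtained. The paper only proves that the evaluation map $\Phi$ is \emph{bounded}, then invokes Lemma \ref{waldspurger1} to identify $\ker\Phi$ with $V_1\otimes V_2''$ and rule it out, and proves surjectivity by first noting that the hypothesis forces $V$ to be $\pi_1$-isotypic and then checking that each finite-dimensional cyclic piece $W_v=\pi(\C[G_1])v$ lies in the image. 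You instead Schur-normalize the inner product so that $T$ is an exact isometry on pure tensors --- injectivity and closedness of the image are then free --- and you prove surjectivity by computing $(\Im T)^{\perp}=\bigcap_{\varphi}\ker(\varphi^{\ast})=\bigcap_{\psi\in\Hom_{G_1}(V,V_1)}\ker(\psi)=0$, which feeds the hypothesis in directly rather than through the isotypic decomposition. Your route is self-contained (it never uses Lemma \ref{waldspurger1} for existence) and it delivers a unitary equivalence on the nose, whereas the paper's bounded bijection still needs a polar-decomposition remark to become unitary; the paper's route, on the other hand, reuses the machinery of the preceding lemma and makes the isotypic structure of $V$ explicit. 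The only point you assert without verification is the strong continuity of $\pi_2'$; this is a routine finite-sum estimate (finite because $\dim V_1<\infty$), and it is the one computation the paper carries out in detail that you should not skip entirely, since continuity of $g_2\mapsto\pi(1,g_2)\circ\varphi$ in the Hilbert--Schmidt norm is exactly what makes $(\pi_2',V_2')$ an object of $\Rep(G_2)$.
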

\begin{proof}
In this case, $V$ is a total $\pi_1$-isotypic representation.  We take the Hilbert space $V_2'=\HS-\Hom_{G_1}(V_1, V)$. Since $\dim V_1 <+\infty$, $\HS-\Hom_{G_1}(V_1, V)=\Hom_{G_1}(V_1,V)$. We can define a $G_2$-action on $V_2'$ by $g_2 \cdot \varphi(v_1)=\pi(g_2) \varphi(v_1)$, for $\varphi\in V_2'$, $g_2\in G_2$. Let $\{e_1, \cdots, e_n\}$ be an othogonormal basis of $V_1$. Then $\|g_2 \varphi\|_{HS}^2=\sum_{i=1}^n \|[g_2\varphi](e_i)\|^2=\| \varphi\|_{HS}^2$, i.e., $g_2$ is a unitary action on $V_2'$. Moreover, for $g_i\in G_2, \varphi_i \in V_2'$,
$$\| g_1\varphi_1-g_2\varphi_2\|_{HS}^2=\sum_{i=1}^n \|[g_1\varphi_1-g_2\varphi_2](e_i)\|^2$$
$$\leq \sum_{i=1}^n  (\|g_1\varphi_1(e_i) - g_1\varphi_2(e_i)\|+\|g_1\varphi_2(e_i) - g_2\varphi_2(e_i)\|)^2$$
$$\leq  \sum_{i=1}^n 2\|g_1\varphi_1(e_i) - g_1\varphi_2(e_i)\|^2 +\sum_{i=1}^n 2\|g_1\varphi_2(e_i) - g_2\varphi_2(e_i)\|^2$$
$$\leq 2\|\varphi_1-\varphi_2\|_{HS}^2+ \sum_{i=1}^n 2\|g_1\varphi_2(e_i) - g_2\varphi_2(e_i)\|^2.$$
Note that  for any $e_i$ and $\varphi_2$ fixed, $g\longrightarrow g\varphi_2(e_i)$,  is continuous on $G_2$. Hence for any $\epsilon>0$,  there exist open neighborhoods $\mathcal{U}(g_2)$, $\mathcal{U}(\varphi_2)$, such that for any $g_1\in\mathcal{U}(g_2)$, $\varphi_1\in \mathcal{U}(\varphi_2)$, we have
$$\|\varphi_1-\varphi_2\|_{HS}<\frac{\epsilon}{2}, \quad \|g_1\varphi_2(e_i) - g_2\varphi_2(e_i)\|<\frac{\epsilon}{2\sqrt{n}},$$
$$\| g_1\varphi_1-g_2\varphi_2\|_{HS}^2 < \epsilon^2.$$
Hence the $G_2$-action on $V_2'$ defines a unitary representation of $G_2$, denoted by $(\pi_2', V_2')$ from now on. For $v_1\in V_1$, $\varphi \in \Hom_{G_1}(V_1, V)$, we can define $\Phi(v_1, \varphi)=\varphi(v_1)\in V$. Then it is a $\C$-bilinear map. For $g_1\in G_1, g_2\in G_2$, $\Phi(g_1v_1\otimes g_2 \varphi)=g_2\varphi(g_1v)=[g_1, g_2]\varphi(v)$. If $v_1=\sum_{i=1}^n c_i e_i$, $\| \Phi(v_1\otimes  \varphi)\|^2=\|\varphi(v_1)\|^2=\|\sum_{i=1}^n c_i\varphi(e_i)\|^2 \leq (\sum_{i=1}^n |c_i|^2) \cdot (\sum_{i=1}^n \|\varphi(e_i)\|^2)=\|\sum_{i=1}^n c_ie_i\|^2 \cdot \|\varphi\|^2_{HS}=\|v_1\|^2\cdot \|\varphi\|^2_{HS}=\|v_1\otimes  \varphi\|^2$. Hence $\Phi\in \Hom_{G_1\times G_2}(V_1\otimes V_2', V)$. Let $W=\ker(\Phi)$, which is a closed $G_1\times G_2$-invariant subspace of $V_1\otimes V_2'$. By the above lemma, $W\simeq V_1\otimes V_2''$, for some closed subspace $V_2''$ of $V_2'$. For any $\varphi\in  V_2''$,  if $\varphi \neq 0$, then $0=\Phi(v_1\otimes \varphi)=\varphi(v_1)$, for all $v_1\in V_1$. So $\varphi=0$, a contradiction! Therefore $\Phi$ is injective.  For any $0\neq v\in V$,  $W_v=\pi(\C[G_1])v$ has finite dimension. Then $V_1\otimes \Hom_{G_1}(V_1, W_v)\simeq W_v$. Hence  $\Phi$ is also surjective, and  $\pi \simeq \pi_1 \widehat{\otimes} \pi_2' $, as $G_1\times G_2$-modules.  The uniqueness  follows similarly as above.
\end{proof}

\subsection{Some definitions} Let $(\pi, V) \in \Rep(G_1\times G_2)$.   By  Waldspurger's two lemmas,  for $(\pi_1, V_1)\in \mathcal{R}_{G_1}(\pi)$, $V_{\pi_1} \neq 0$, and $V_{\pi_1} \simeq \pi_1 \widehat{\otimes} \pi_2'$, for some  $ \pi_2'\in \Rep(G_2)$. From now on, we denote this $\pi_2'$ by $\Theta_{\pi_1}$.  Notice that $\mathcal{R}_{G_2}(\Theta_{\pi_1}) \subseteq \mathcal{R}_{G_2}(\pi)$. If for each $\pi_1\in \mathcal{R}_{G_1}(\pi)$, $\mathcal{R}_{G_2}(\Theta_{\pi_1})$ only contains one element $\theta_{\pi_1} \in \widehat{G_2}$,  we can draw a map $\theta_1:  \mathcal{R}_{G_1}(\pi) \longrightarrow \mathcal{R}_{G_2}(\pi); \pi_1\longmapsto \theta_1(\pi_1)=\theta_{\pi_1}$. To write smoothly, we give the following definitions. This is just for well writing.
\begin{definition}
Let $(\pi, V) \in \Rep(G_1\times G_2)$.
\begin{itemize}
\item[(1)] If for each $\pi_1\in \mathcal{R}_{G_1}(\pi)$, $\mathcal{R}_{G_2}(\Theta_{\pi_1})$ contains one element, we will call $\pi$ a $\theta_1$-graphic or a graphic representation of $G_1\times G_2$, associating to  the  map $\theta_1: \pi_1 \longrightarrow \theta_1(\pi_1)$. Similarly, we can define a $\theta_2$-graphic representation.
\item[(2)] If $\pi$ is a $\theta_1$ and $\theta_2$ representation, we call $\pi$ a $\theta$-bigraphic or bigraphic representation of $G_1\times G_2$. In this case, $\pi_1 \longleftrightarrow \theta(\pi_1)$ defines a correspondence,  called theta correspondence or Howe correpondence,  between $\mathcal{R}_{G_1}(\pi)$ and $\mathcal{R}_{G_2}(\pi)$,
\item[(3)]  If $\pi$ is a $\theta$-bigraphic and multiplicity-free representation of $G_1\times G_2$, we will $\pi$ a theta representation of $G_1\times G_2$.
\end{itemize}
\end{definition}
We remark that the similar definitions  can also be given for projective representations directly or by lifting onto their central extension groups.
\begin{example}\label{simple2}
Let $N$ be a closed normal subgroup of a compact group $G$. For any $g\in G$, $\C[Ng]$ is a theta representation of $N\times N$, where the action is defined as follows:  $[(n_1, n_2) \phi](ng)=\phi(n_1^{-1}ngn_2)$, for $n, n_1,n_2\in N$, $\phi \in \C[Ng]$.
\end{example}
\subsection{Some observations}
In this subsection, we will present some  observations from  the experts' works on Howe correspondences \cite{Ad1}\cite{Gan}\cite{Ho1}\cite{Ho2}\cite{MVW}\cite{Ra},etc. Keep the notations. Let us write $V=\oplus_{(\sigma,W)\in \widehat{G_1}} V_{\sigma}$, and $V_{\sigma}=\sigma\widehat{\otimes} \Theta_{\sigma}=\sigma\otimes \Theta_{\sigma}$.
\begin{lemma}\label{Homdi}
Let us write $B_{\sigma}=\mathcal{B}(\Theta_{\sigma}, \Theta_{\sigma})=\Hom(\Theta_{\sigma}, \Theta_{\sigma})$, $H_{\sigma}=\HS-\Hom(\Theta_{\sigma}, \Theta_{\sigma})$, $(\sigma,W)\in \widehat{G_1}$. Then:
\begin{itemize}
\item[(1)] $\Hom_{G_1}(V_{\sigma}, V_{\sigma}) \simeq B_{\sigma}$, and $ \HS-\Hom_{G_1}(V_{\sigma}, V_{\sigma})\simeq H_{\sigma}$,
\item[(2)] $\Hom_{G_1}(V, V)\simeq  l^{\infty}(B_{\sigma})$,
\item[(3)]$\HS-\Hom_{G_1}(V, V)  \simeq l^{2} (H_{\sigma})$.
\end{itemize}
\end{lemma}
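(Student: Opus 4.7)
The plan is to establish part (1) by a direct Schur-type argument and then to deduce parts (2) and (3) formally from Lemma \ref{ISOM} applied with $V_1=V_2=V$.

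For (1), since $G_1$ is compact and $\sigma$ is irreducible unitary, $W$ is finite-dimensional. Choosing an orthonormal basis $\{e_1,\dots,e_n\}$ of $W$, the Hilbert tensor product $V_\sigma=W\widehat{\otimes}\Theta_\sigma$ decomposes as the finite orthogonal sum $\oplus_{i=1}^n e_i\otimes\Theta_\sigma$, and $\mathcal{B}(V_\sigma)$ identifies with $\mathcal{B}(W)\otimes\mathcal{B}(\Theta_\sigma)$. The $G_1$-equivariance of $F\in\mathcal{B}(V_\sigma)$ with respect to the action $\sigma(g)\otimes\id$ forces the $\mathcal{B}(W)$-part of $F$ to lie in the commutant of $\sigma(G_1)$ acting on $W$, which is $\C\cdot\id_W$ by Schur's lemma. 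Hence $F=\id_W\otimes T$ for a unique $T\in\mathcal{B}(\Theta_\sigma)$, and the assignment $F\longleftrightarrow T$ provides the isomorphism $\Hom_{G_1}(V_\sigma,V_\sigma)\simeq B_\sigma$ as Banach spaces (with $\|F\|_{op}=\|T\|_{op}$). For the Hilbert-Schmidt variant, a direct computation using the orthonormal basis $\{e_i\otimes f_\alpha\}$ of $V_\sigma$, for $\{f_\alpha\}$ an orthonormal basis of $\Theta_\sigma$, yields $\|F\|_{\HS}^2=n\,\|T\|_{\HS}^2$, so $F$ is Hilbert-Schmidt iff $T$ is, establishing $\HS-\Hom_{G_1}(V_\sigma,V_\sigma)\simeq H_\sigma$.

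Parts (2) and (3) then follow by applying Lemma \ref{ISOM} with both source and target equal to $V$: the only nonzero $\rho$-isotypic component to consider is $V_\sigma$ when $\rho=\sigma$, so $\Hom_{G_1}(V,V)\simeq l^\infty(\Hom_{G_1}(V_\sigma,V_\sigma))\simeq l^\infty(B_\sigma)$ by (1), and analogously $\HS-\Hom_{G_1}(V,V)\simeq l^2(\HS-\Hom_{G_1}(V_\sigma,V_\sigma))\simeq l^2(H_\sigma)$.

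The step I expect to be most delicate is the Hilbert-Schmidt identification inside (1): one must keep track that the finite-dimensional tensor factor rescales the HS norm by $\sqrt{n}$, which is harmless for the underlying Hilbert space isomorphism but must be handled carefully so that the $l^2$-direct sum in (3) assembles into a genuine Hilbert space isomorphism rather than merely a bounded linear bijection. Once this bookkeeping is in place, the rest of the argument is formal, using only Lemma \ref{ISOM} and Schur's lemma.
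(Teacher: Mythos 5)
Your proof is correct and follows essentially the same route as the paper's: part (1) is obtained by showing that a $G_1$-equivariant operator on $V_\sigma=W\widehat{\otimes}\Theta_\sigma$ must have the form $\id_W\otimes T$ (the paper does this by producing matrix units $\sigma(\epsilon_i)$ in the image of $\C[G_1]$, which is the same Schur/Burnside commutant computation you phrase abstractly), and parts (2) and (3) are then read off from Lemma \ref{ISOM}. Your explicit remark that $\|\id_W\otimes T\|_{\HS}^2=n\|T\|_{\HS}^2$, so that the identification in (3) is only an isomorphism after a $\sqrt{\dim\sigma}$ rescaling in each component, is a point the paper passes over in silence and is worth keeping.
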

\begin{proof}
(1)(a) For any $T\in \Hom(\Theta_{\sigma}, \Theta_{\sigma})$, $\id \otimes T \in \Hom_{G_1}(\sigma\otimes \Theta_{\sigma}, \sigma\otimes \Theta_{\sigma})$. Moreover, $\|\id \otimes T\|=\|\id\| \cdot\|T\|=\|T\|$. Conversely, assume $0\neq S\in \Hom_{G_1}(\sigma\otimes \Theta_{\sigma}, \sigma\otimes \Theta_{\sigma})$.  Let $e_1, \cdots, e_n$ (resp. $f_1, \cdots, f_n, \cdots$) be an orthonormal  basis of  $W$ (resp. $\Theta_{\sigma}$). Assume $0\neq S(e_i \otimes f_j)=\sum_{i=1}^n e_i \otimes f_i'$. Since $W$ is an irreducible $G_1$-module, there exists $\epsilon_{i}\in \C[G_1]$, such that $\sigma(\epsilon_i) e_j=\delta_{ij} e_i$, for any $1\leq j\leq n$. Hence $S(e_i\otimes f_j)=S(\sigma(\epsilon_i) e_i\otimes f_j) =\sum_{j=1}^n\sigma(\epsilon_i)e_j \otimes f_j'=e_i\otimes f_i'$; consequently let us write $f'_{j'}=f_i'$, and  then  $S(w\otimes f_j)=w\otimes f'_{j'}$, for any $w\in W$.  Here, $f'_{j'}$ is uniquely determined by $f_j$ and $S$. If $S(W\otimes f_j)=0$, we let $f_{j'}'=0$. By uniqueness, $f_j \longrightarrow f_{j'}'$, defines a linear map from $\Theta_{\sigma}$ to $\Theta_{\sigma}$; let us denote it by $T$. Hence $S=\id \otimes T$ on the whole space, and then $\|S\|=\|T\|$. Therefore, $T\in \Hom(\Theta_{\sigma}, \Theta_{\sigma})$, and the first statement holds. \\
(1)(b) If $T$ is a Hilbert-Schmidt operator, so is $S=\id\otimes T$. So the second statement also holds.\\
Parts (2)(3) follow from  Lmm.\ref{ISOM} and (1).
\end{proof}
Let  $\Delta_{G_i}=\{ (g,g) \mid g\in G_i\}$ be the diagonal subgroup of $G_i\times G_i$. For $(\pi, V) \in \Rep(G_1\times G_2)$,
 let $\mathcal{C}=\mathcal{B}(V, V)$, a $C^{\ast}$-algebra. Let  $A$ be the   subalgebra of $\mathcal{C}$ generated by $\pi([g_1, 1])$, for all $g_1\in G_1$  and $\mathcal{A}$ the  strong closure of $A$ in $\mathcal{C}$; let   $B$ be the   subalgebra  of $\mathcal{C}$ generated by  $\pi([1,g_2])$, for all $g_2\in G_2$ and $\mathcal{B}$ the  strong closure of $B$ in $\mathcal{C}$.   Let $A'$, $B'$ denote the the commutants of $A$, $B$ in $\mathcal{C}$ respectively.  By \cite[12.3]{Co2}, $\mathcal{A}=A''$, $\mathcal{B}=B''$.

    Let $\mathcal{M}_{i}=\HS-\Hom_{G_i}(V, V)$, for $1\leq i\leq 2$. For  $\phi\in \mathcal{M}_{i}$, $g, g'\in G_j$, $j\neq i$,  let  $(g', g)$ act on $\phi$ as follows: $[(g', g) \phi](v)=\pi(g')\phi(\pi(g^{-1})v)$, for $ v\in V$.
\begin{lemma}\label{Uni}
With the above actions, $ \mathcal{M}_{i}$ is a unitary $(G_j\times G_j)$-module, for $1\leq i\neq j\leq 2$.
\end{lemma}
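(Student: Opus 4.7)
The plan is to verify four things in order: (a) the prescribed formula stays inside $\mathcal{M}_i$, (b) it really is a group action, (c) it is unitary for the Hilbert--Schmidt inner product, and (d) it is strongly continuous. Throughout, fix $i \neq j$ in $\{1,2\}$, and note that since $\pi$ comes from $G_1\times G_2$ the operators $\pi(g_i)$ and $\pi(g_j')$ commute whenever $g_i\in G_i$ and $g_j'\in G_j$.

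First I would check that $(g',g)\phi := \pi(g')\circ\phi\circ \pi(g^{-1})$ lies in $\mathcal{M}_i$ for every $\phi\in\mathcal{M}_i$ and $g',g\in G_j$. The $G_i$-equivariance is the direct computation $[(g',g)\phi]\circ\pi(g_i) = \pi(g')\phi\,\pi(g^{-1})\pi(g_i) = \pi(g_i)\pi(g')\phi\,\pi(g^{-1}) = \pi(g_i)[(g',g)\phi]$, using twice the commutation of $G_i$ and $G_j$ factors and once that $\phi$ itself intertwines $G_i$. For the Hilbert--Schmidt condition, since $\pi(g')$ and $\pi(g^{-1})$ are unitary on the separable Hilbert space $V$ and the HS operators form a two-sided ideal with $\|UTW\|_{\HS}=\|T\|_{\HS}$ for all unitaries $U,W$, $(g',g)\phi$ is again Hilbert--Schmidt with the same HS norm as $\phi$.

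Second, the group law $[(g_1',g_1)(g_2',g_2)]\phi = (g_1',g_1)\big((g_2',g_2)\phi\big)$ is a direct expansion using $\pi(g_1'g_2')=\pi(g_1')\pi(g_2')$ and $\pi((g_1g_2)^{-1})=\pi(g_2^{-1})\pi(g_1^{-1})$. The identity $[(1,1)\phi]=\phi$ is clear. Third, the unitarity of the action on $\mathcal{M}_i$ with respect to $\langle\cdot,\cdot\rangle_{\HS}$ is the identity $\|(g',g)\phi\|_{\HS}=\|\phi\|_{\HS}$ already noted above; by polarization this upgrades to preservation of the full Hilbert--Schmidt inner product.

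The main obstacle is the fourth point, strong continuity of $(g',g)\mapsto (g',g)\phi$ in HS norm. Here I would argue as follows. For $(g',g),(h',h)\in G_j\times G_j$, insert an intermediate term to get
\[
\|(g',g)\phi-(h',h)\phi\|_{\HS}^{2} \leq 2\|\phi\,\pi(g^{-1})-\phi\,\pi(h^{-1})\|_{\HS}^{2}+2\|(\pi(g')-\pi(h'))\phi\|_{\HS}^{2}.
\]
Fixing an orthonormal basis $\{e_k\}$ of $V$, the second summand is $2\sum_k\|(\pi(g')-\pi(h'))\phi(e_k)\|^{2}$. Since each term tends to $0$ as $h'\to g'$ by strong continuity of $\pi$, and since $\sum_k\|\phi(e_k)\|^{2}=\|\phi\|_{\HS}^{2}<+\infty$, a finite-rank truncation of $\phi$ combined with dominated convergence (splitting the sum into a head and tail) yields convergence to $0$. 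For the first summand, passing to adjoints gives the HS norm of $\pi(g^{-1})^{*}\phi^{*}-\pi(h^{-1})^{*}\phi^{*}$, and the same argument applied to $\phi^{*}$ (which is also HS) handles it. This proves joint continuity at every $(g',g)$, so $\mathcal{M}_i$ becomes a unitary $(G_j\times G_j)$-module, as desired.
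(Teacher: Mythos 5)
Your proof is correct, and the preliminary verifications (stability of $\mathcal{M}_i$ under the action, the group law, unitarity via the two-sided ideal property of Hilbert--Schmidt operators) match what the paper does or leaves implicit. Where you genuinely diverge is in the continuity step, which is the heart of the lemma. The paper approximates $\phi$ in $\HS$-norm by finite-rank operators $\mu_m$ chosen, via the decomposition $\mathcal{M}_1\simeq l^2(H_\sigma)$ and the isotypic decomposition of each $\Theta_\sigma$, so that $[\Ker(\mu_m)]^{\bot}$ and $\Im(\mu_m)$ consist of $(G_1\times G_2)$-finite vectors; the orbit of these spaces then spans a finite-dimensional invariant subspace $U_{\mu_m}$ on which strong continuity of $\pi$ upgrades to operator-norm continuity, and a chain of triangle inequalities finishes the estimate. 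You instead split $\|(g',g)\phi-(h',h)\phi\|_{\HS}$ into a left factor and a right factor, handle the left factor $\|(\pi(g')-\pi(h'))\phi\|_{\HS}$ by a head/tail (dominated convergence) argument on the sum $\sum_k\|(\pi(g')-\pi(h'))\phi(e_k)\|^2$ with summable dominating sequence $4\|\phi(e_k)\|^2$, and reduce the right factor to the left one by passing to the adjoint $\phi^{*}$, using $\|T\|_{\HS}=\|T^{*}\|_{\HS}$ and $(\phi\circ\pi(g^{-1}))^{*}=\pi(g)\circ\phi^{*}$. Your route is more elementary: it never invokes Lemma 3.4 or Peter--Weyl-type finiteness, and it shows that the conclusion holds for any strongly continuous unitary representation of a (not necessarily compact) topological group. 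What the paper's argument buys in exchange is a uniform operator-norm estimate on the finite-dimensional space $U_{\mu_m}$, which fits the compact-group framework the rest of the section relies on; but for this particular lemma your approach is cleaner and loses nothing.
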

\begin{proof}
Assume $i=1$, $j=2$. Let $\{e_1, \cdots, e_n, \cdots \}$ be an orthonormal basis of $V$.  For a fixed $0\neq \phi \in \mathcal{M}_{1}$,  there exists a  series of operators $0\neq \mu_m$  of  finite rank approaching $\phi$ under the $\HS$-norm.  For $(g', g)\in G_2\times G_2$, \[\|[(g', g) \phi\|_{HS}^2=\sum_{i\geq 1} \|\pi(g')\phi(\pi(g^{-1})e_i)\|^2=\sum_{i\geq 1} \|\phi(\pi(g^{-1})e_i)\|^2=\| \phi\|_{HS}^2\]

According to Lmm.\ref{Homdi}(3), $\mathcal{M}_{1}\simeq l^2(H_{\sigma})$. Moreover, each $\Theta_{\sigma}$ can be written as a Hilbert direct sum of its $G_2$-isotypic components. In this way, we may choose  those $\mu_m $, such that the vector space  $[\Ker(\mu_m)]^{\bot}$ and $\Im(\mu_m)$ only contain $(G_1\times G_2)$-finite elements. For any $\epsilon>0$, there exists one $m$, such that $\| \phi-\mu_m\|_{HS}^2< \frac{\epsilon^2}{64}$. For such $\mu_m$, let  $U_{\mu_m}$ denote the vector space  spanned  by   $(h_1,h_2) v$,   for $(h_1,h_2)\in G_1\times G_2$, $v\in [\Ker(\mu_m)]^{\bot}+\Im(\mu_m)$. Then $U_{\mu_m}$ has finite dimension.  Hence  for  a fixed $(g', g)\in G_2\times G_2$, there exists a neighborhood $\mathcal{U}(g', g)$ such that for any $(h', h)\in  \mathcal{U}(g', g)$,
$$\|\pi(h')-\pi(g')\|_{U_{\mu_m}}< \frac{\epsilon}{4\|\mu_m\|_{HS}},\quad   \quad \|\pi(h^{-1})-\pi(g^{-1})\|_{U_{\mu_m}}< \frac{\epsilon}{4\|\mu_m\|_{HS}}.$$
Consequently,
$$\| \mu_m\circ[\pi(h^{-1})-\pi(g^{-1})]\|_{HS}\leq \|\mu_m\|_{HS} \cdot \|\pi(h^{-1})-\pi(g^{-1})\|_{U_{\mu_m}}< \frac{\epsilon}{4}$$
 $$\|[\pi(h')- \pi(g')]\circ \mu_m \|_{HS}\leq \|\mu_m\|_{HS} \cdot \|\pi(h')- \pi(g')\|_{U_{\mu_m}}<\frac{\epsilon}{4}.$$
Hence:
$$\|(h', h) \phi-(g', g)\phi\|_{HS}^2=\sum_{i\geq 1} \|[(h', h) \phi-(g', g)\phi](e_i)\|^2= \sum_{i\geq1}  \|h'\phi(h^{-1}e_i) -g'\phi(g^{-1}e_i)\|^2 $$ $$ \leq  \sum_{i\geq 1} (\|h'\phi(h^{-1}e_i) - g'\phi(h^{-1}e_i)\|+\|g'\phi(h^{-1}e_i) - g'\phi(g^{-1}e_i)\|)^2 $$
$$\leq 2 \|\pi(h')\circ \phi - \pi(g')\circ\phi\|_{HS}^2+ 2 \|\phi\circ \pi(h^{-1}) - \phi\circ \pi(g^{-1})\|_{HS}^2$$
$$\leq 4 \|[\pi(h')- \pi(g')]\circ \mu_m \|_{HS}^2+ 4\|[\pi(h')- \pi(g')]\circ (\phi-\mu_m )\|_{HS}^2$$
$$+4 \| \mu_m\circ[\pi(h^{-1}) -  \pi(g^{-1})]\|_{HS}^2+4 \| (\phi-\mu_m )\circ[\pi(h^{-1}) -  \pi(g^{-1})]\|_{HS}^2$$
$$\leq 4 \|[\pi(h')- \pi(g')]\circ \mu_m \|_{HS}^2+32\|\phi-\mu_m \|_{HS}^2+ 4\| \mu_m\circ[\pi(h^{-1}) -  \pi(g^{-1})]\|_{HS}^2$$
$$< \epsilon^2.$$
Hence for a fixed $\phi$, $(h',h) \longrightarrow (h',h)\phi$, defines a  continuous map on $G_2\times G_2$. Hence  $ \mathcal{M}_{1}$  is a unitary $(G_2\times G_2)$-module.
\end{proof}
\begin{lemma}\label{conjiso}
Keep the notations. $\mathcal{M}_{i} \simeq \oplus_{\sigma\in \widehat{G_i}}\overline{\Theta_{\sigma}} \widehat{\otimes} \Theta_{\sigma}$, as $G_j\times G_j$-modules, for $1\leq i\neq j\leq 2$.
\end{lemma}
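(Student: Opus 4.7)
The plan is to combine Lemma \ref{Homdi} with the standard Hilbert--Schmidt identification $\HS\text{-}\Hom(A, A) \simeq \overline{A}\widehat{\otimes} A$, and then check that the $G_j\times G_j$-action transports correctly. I treat the case $i=1$, $j=2$; the other case is symmetric.

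First, by Lemma \ref{Homdi}(3) there is an isometric isomorphism of Hilbert spaces
\[
\mathcal{M}_1 \;=\; \HS\text{-}\Hom_{G_1}(V,V) \;\simeq\; l^2(H_\sigma), \qquad H_\sigma = \HS\text{-}\Hom(\Theta_\sigma,\Theta_\sigma),
\]
where the summand $H_\sigma$ is identified with $\HS\text{-}\Hom_{G_1}(V_\sigma,V_\sigma)$ via $T \longmapsto \id_W \otimes T$ as in the proof of Lemma \ref{Homdi}(1). Since a Hilbert direct sum of unitary $G_j\times G_j$-modules is again unitary, it suffices, after identifying each $H_\sigma$ with $\overline{\Theta_\sigma}\widehat{\otimes}\Theta_\sigma$, to verify that the $(G_2\times G_2)$-action inherited from the action of Lemma \ref{Uni} agrees with the external tensor product action on $\overline{\Theta_\sigma}\widehat{\otimes}\Theta_\sigma$.

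For the Hilbert--Schmidt identification I use the standard rule: a simple tensor $\bar a\otimes b \in \overline{\Theta_\sigma}\widehat{\otimes}\Theta_\sigma$ corresponds to the rank-one operator $v\mapsto \langle a,v\rangle b$, and this extends by continuity to a unitary isomorphism. To transport the action, I unravel definitions: for $(g',g)\in G_2\times G_2$ and $T\in H_\sigma$, lifted to $\id_W\otimes T$ on $V_\sigma = W\otimes \Theta_\sigma$, the formula of Lemma \ref{Uni} gives
\[
\bigl[(g',g)(\id_W\otimes T)\bigr](w\otimes v) \;=\; w\otimes \Theta_\sigma(g')\,T\,\Theta_\sigma(g^{-1})\,v,
\]
so the induced action on $H_\sigma$ is $T\longmapsto \Theta_\sigma(g')\circ T\circ \Theta_\sigma(g)^{-1}$. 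Applying this to $T = \bar a\otimes b$ and using unitarity of $\Theta_\sigma(g)$ in the form $\langle a, \Theta_\sigma(g^{-1})v\rangle = \langle \Theta_\sigma(g)a, v\rangle$, the new operator sends $v$ to $\langle \Theta_\sigma(g)a,v\rangle \Theta_\sigma(g')b$, which is precisely the rank-one operator associated to $\overline{\Theta_\sigma(g)a}\otimes \Theta_\sigma(g')b$. That is exactly the formula for the external tensor product action of $(g,g')$ on $\overline{\Theta_\sigma}\widehat{\otimes}\Theta_\sigma$, where the first factor acts on $\overline{\Theta_\sigma}$ by $\overline{\Theta_\sigma}(g)\bar a = \overline{\Theta_\sigma(g)a}$.

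Assembling these pieces over all $\sigma\in\widehat{G_1}$ and using that the identification is isometric on each summand yields a unitary $(G_2\times G_2)$-equivariant isomorphism $\mathcal{M}_1\simeq \oplus_{\sigma}\overline{\Theta_\sigma}\widehat{\otimes}\Theta_\sigma$. The main technical nuisance I expect is purely bookkeeping: pinning down the convention for the isomorphism $\HS\text{-}\Hom(A,A)\simeq \overline A\widehat{\otimes}A$ so that the conjugate linearity of the Riesz map matches the definition of the conjugate representation $\overline{\Theta_\sigma}$; once that is fixed, the equivariance calculation above is a one-line check and the remaining statements (isometry, Hilbert direct sum, continuity) follow from Lemmas \ref{ISOM}, \ref{Homdi} and \ref{Uni}.
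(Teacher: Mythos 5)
Your proposal is correct and follows essentially the same route as the paper, whose proof is just the one-line remark that the claim follows from Lemmas \ref{Homdi} and \ref{Uni} by considering the $G_j\times G_j$-action; you have simply written out the details (the $l^2(H_\sigma)$ decomposition, the rank-one-operator identification $\HS\text{-}\Hom(\Theta_\sigma,\Theta_\sigma)\simeq\overline{\Theta_\sigma}\widehat{\otimes}\Theta_\sigma$, and the equivariance check) that the paper leaves implicit.
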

\begin{proof}
It follows from Lemmas \ref{Homdi}, \ref{Uni}, by considering the action of $G_j\times G_j$.
\end{proof}
 \begin{proposition}\label{theta}
The following statements are equivalent:
\begin{itemize}
\item[(1)] $(\pi, V)$ is a theta representation of $G_1\times G_2$,
\item[(2)] $\mathcal{B}=\mathcal{A}'$,
\item[(3)]  $\mathcal{A}=\mathcal{B}'$,
\item[(4)] $\mathcal{R}_{G_j \times G_j}(\mathcal{M}_i)=\{ \overline{\sigma_j}\otimes \sigma_j \mid \textrm{ some }  \sigma_j\in \widehat{G_j}\}$, for $1\leq i\neq j\leq 2$,
\item[(5)]  $\mathcal{M}_i$ is a multiplicity-free representation of  $G_j\times G_j$, for $1\leq i\neq j\leq 2$,
\item[(6)] $m_{ G_j \times G_j}( \mathcal{M}_i, \overline{\sigma_j}\otimes \sigma_j ) \leq 1 $, for all $\sigma_j \in  \widehat{G_j}$,  $1\leq i\neq j\leq 2$.
    \end{itemize}
\end{proposition}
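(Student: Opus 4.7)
The plan is to organize the six conditions into two clusters --- the von Neumann algebra cluster $\{(2),(3)\}$ and the Hilbert--Schmidt cluster $\{(4),(5),(6)\}$ --- each of which I would connect to (1) via the isotypic decomposition of $V$ as a $G_1\times G_2$-module.

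First, I would dispatch $(2)\Leftrightarrow(3)$ using the bicommutant theorem: since $\mathcal{A}$ and $\mathcal{B}$ act through commuting factors, $\mathcal{B}\subseteq\mathcal{A}'$ and $\mathcal{A}\subseteq\mathcal{B}'$; because $\mathcal{A}=\mathcal{A}''$ and $\mathcal{B}=\mathcal{B}''$, taking commutants turns the equality $\mathcal{B}=\mathcal{A}'$ into $\mathcal{A}=\mathcal{B}'$ and back.

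The computational core is an explicit decomposition of $\mathcal{M}_i$. Writing $V=\widehat{\oplus}_{\sigma\in\widehat{G_1}}V_\sigma$ with $V_\sigma\simeq\sigma\widehat{\otimes}\Theta_\sigma$ and $\Theta_\sigma\simeq\widehat{\oplus}_{\tau\in\widehat{G_2}}m_\tau^\sigma\tau$, where $m_\tau^\sigma\in\N\cup\{+\infty\}$, Lemma \ref{conjiso} yields
\[
\mathcal{M}_1 \;\simeq\; \widehat{\oplus}_{\sigma}\,\overline{\Theta_\sigma}\,\widehat{\otimes}\,\Theta_\sigma \;\simeq\; \widehat{\oplus}_{\sigma,\tau,\tau'}\, m_\tau^\sigma\, m_{\tau'}^\sigma\,(\overline{\tau}\otimes\tau').
\]
Each $\overline{\tau}\otimes\tau'$ being irreducible over $G_2\times G_2$, the multiplicity of $\overline{\tau}\otimes\tau'$ in $\mathcal{M}_1$ is $M_{\tau,\tau'}:=\sum_\sigma m_\tau^\sigma m_{\tau'}^\sigma$, and in particular $M_{\tau,\tau}=\sum_\sigma (m_\tau^\sigma)^2$; a symmetric formula holds for $\mathcal{M}_2$.

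From these multiplicity formulas I would settle the cluster $\{(4),(5),(6)\}$ directly. Condition (6) applied to $\mathcal{M}_1$ reads $\sum_\sigma (m_\tau^\sigma)^2\leq 1$ for every $\tau$, forcing $m_\tau^\sigma\in\{0,1\}$ and at most one $\sigma$ with $m_\tau^\sigma=1$; combined with (6) for $\mathcal{M}_2$, which similarly limits each $\sigma$ to a single $\tau$, one recovers $V\simeq\widehat{\oplus}_i \sigma_i\otimes\tau_i$ with pairwise distinct irreducible summands --- the definition of a theta representation. Conversely, in a theta representation $M_{\tau,\tau'}\in\{0,1\}$, so $(1)\Rightarrow(5)\Rightarrow(6)$. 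Condition (4) translates to $M_{\tau,\tau'}=0$ whenever $\tau\neq\tau'$, i.e.\ the bigraphic property; (1) implies (4), and (4) together with the diagonal-multiplicity bound from (5) or (6) upgrades to (1).

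Finally, to close $(1)\Leftrightarrow(2)$, I would combine the isotypic decomposition with Schur and Peter--Weyl. Under (1), Lemma \ref{Homdi}(2) identifies $\mathcal{A}'\simeq l^\infty(\Hom(\tau_\sigma,\tau_\sigma))$; irreducibility of each $\tau_\sigma$ and the double commutant theorem fill each $\sigma$-slot to all of $\Hom(\tau_\sigma,\tau_\sigma)$, while distinctness of the $\tau_\sigma$'s allows a Peter--Weyl argument to approximate the projections onto the $V_\sigma$'s via linear combinations of $\pi([1,g_2])$, giving $\mathcal{B}=\mathcal{A}'$. Conversely, if $\mathcal{B}=\mathcal{A}'$, the central projections in $\mathcal{A}'$ onto the $V_\sigma$'s lie in $\mathcal{B}$, forcing the $\Theta_\sigma$'s to be pairwise inequivalent, and $\mathcal{B}|_{V_\sigma}=\Hom(\Theta_\sigma,\Theta_\sigma)$ forces each $\Theta_\sigma$ to be irreducible, recovering (1). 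The main obstacle will be this Peter--Weyl step: verifying that the strong closure of $\{\pi([1,g_2])\mid g_2\in G_2\}$ really fills out the whole product $\widehat{\oplus}_\sigma\Hom(\tau_\sigma,\tau_\sigma)$ requires careful control of the strong operator topology on an infinite direct sum of full matrix algebras and an orthogonality argument on matrix coefficients to separate the components.
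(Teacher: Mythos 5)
Your overall strategy coincides with the paper's: the cluster $(4)$--$(6)$ is handled through the decomposition $\mathcal{M}_i\simeq\widehat{\oplus}_{\sigma}\overline{\Theta_\sigma}\widehat{\otimes}\Theta_\sigma$ of Lemma \ref{conjiso}, and the cluster $(2)$--$(3)$ through the bicommutant theorem and the identification $\mathcal{A}'=\Hom_{G_1}(V,V)\simeq l^{\infty}(\End(\Theta_\sigma))$ of Lemma \ref{Homdi}. Your explicit bookkeeping $M_{\tau,\tau'}=\sum_\sigma m_\tau^\sigma m_{\tau'}^\sigma$ makes the equivalences $(1)\Leftrightarrow(5)\Leftrightarrow(6)$ cleaner and more transparent than the paper's case analysis. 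However, there is a genuine gap at condition $(4)$. In a six-way equivalence you must show that $(4)$ \emph{by itself} implies one of the other conditions, and your sentence ``$(4)$ together with the diagonal-multiplicity bound from $(5)$ or $(6)$ upgrades to $(1)$'' is circular: it assumes exactly what remains to be proved. Your own formula exposes the difficulty: the set-theoretic statement $(4)$ says only that $M_{\tau,\tau'}=0$ for $\tau\neq\tau'$, i.e.\ that every $\Theta_\sigma$ (and every $\Theta_\tau$) is isotypic; it places no bound on the diagonal multiplicities $M_{\tau,\tau}=\sum_\sigma(m_\tau^\sigma)^2$. For instance $V=\sigma_0\otimes 2\tau_0$ satisfies $(4)$ as a statement about supports while failing $(1)$, $(5)$ and $(6)$. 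The paper's own proof of $(4)\Rightarrow(5)$ asserts at the outset that $(4)$ forces each $\Theta_\sigma$ to be zero or irreducible; to reproduce that step you must either adopt a multiplicity-sensitive reading of $(4)$ (that $\mathcal{M}_i$ is \emph{isomorphic} to a multiplicity-one sum of diagonal types) or supply an argument excluding $\Theta_\sigma\simeq m\tau$ with $m\geq 2$. As written, your proof establishes the equivalence of $\{(1),(2),(3),(5),(6)\}$ and only the implication $(1)\Rightarrow(4)$.

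The second incomplete point is the one you flag yourself: in $(1)\Rightarrow(2)$ you do not carry out the density argument showing that the strong closure of the algebra generated by $\pi([1,g_2])$ fills $l^{\infty}(\id\otimes\End(\Theta_\sigma))$. The paper closes this in one step: for any finite collection of pairwise inequivalent irreducibles $\Theta_{\sigma_1},\dots,\Theta_{\sigma_n}$, the image of $\C[G_2]$ surjects onto $\oplus_{i=1}^{n}\End(\Theta_{\sigma_i})$ (Schur's Lemma / Jacobson density for finitely many inequivalent finite-dimensional irreducibles), and since finitely many vectors are, up to $\varepsilon$, supported on finitely many isotypic blocks, the strong closure is all of $\mathcal{A}'$. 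Equivalently, one can avoid the approximation entirely by applying the von Neumann bicommutant theorem to $B$: $\mathcal{B}=B''=(\Hom_{G_2}(V,V))'$, and under $(1)$ the commutant $\Hom_{G_2}(V,V)'=\Hom_{G_1}(V,V)=\mathcal{A}'$ is a direct block computation. Either route disposes of the ``Peter--Weyl obstacle'' you mention; you should include one of them, and you should also make explicit in the converse direction that the pairwise disjointness of the $\Theta_\sigma$ (not just irreducibility of each) is what puts the block projections $P_\sigma$ into $\mathcal{B}$.
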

\begin{proof}
$(1) \Rightarrow (2)$ As  $\mathcal{A}=A''$, $\mathcal{A}'=A'$.  Clearly, $B \subseteq A'$, so the strong closure $\mathcal{B} \subseteq A'=\mathcal{A}'$. Up to isomorphism, let us write $V=\oplus_{(\sigma,W_{\sigma})\in \mathcal{R}_{G_1}(V)} W_{\sigma}\otimes \Theta_{\sigma}$. Since $\pi$ is a theta representation, $\Theta_{\sigma}=\theta(\sigma)$, which corresponds to $\sigma$ uniquely. In this case, $A'=\Hom_{G_1}(V, V) \simeq l^{\infty}(\Hom(\Theta_{\sigma}, \Theta_{\sigma}))$. Here, $\Theta_{\sigma}$ is an irreducible representation of finite dimension, so   $\Hom(\Theta_{\sigma}, \Theta_{\sigma})=\End(\Theta_{\sigma})$. For any different  irreducible representations $\sigma_1, \cdots, \sigma_n$, it is known that $B$ contains $\oplus_{i=1}^n \id\otimes \End(\Theta_{\sigma_i})$ by Schur's Lemma. Hence the strong closure $\mathcal{B}=A'$.\\
$(2) \Leftrightarrow (3)$ $\mathcal{B} =\mathcal{A}'$ implies $\mathcal{B}' =\mathcal{A}''=A''=\mathcal{A}$. The converse also holds. \\
$(2) \Leftrightarrow (3) \Rightarrow (1)$ Let $P_{\sigma}$ be the projection from $V$ to $W_{\sigma} \otimes \Theta_{\sigma}$. Then $P_{\sigma} A'P_{\sigma} \simeq  \Hom (\Theta_{\sigma}, \Theta_{\sigma})$; $P_{\sigma} BP_{\sigma}$, which is isomorphic to the image of $\pi(\id\otimes \C[G_2])$ in $\id \otimes\Hom (\Theta_{\sigma}, \Theta_{\sigma})$. Assume that  $\Theta_{\sigma}$ is  decomposed as $\Theta_{\sigma} \simeq \mathcal{H}_1\oplus \mathcal{H}_2\oplus \cdots$ as $G_2$-modules. If $\Theta_{\sigma}$ is not irreducible, $P_{\sigma} \mathcal{B}P_{\sigma}$ can not be  the full set $\id \otimes\Hom (\Theta_{\sigma}, \Theta_{\sigma})$. So $\pi$ is multiplicity-free, and it is a $\theta_1$-representation. By symmetry, $\pi$ is a $\theta$-representation. \\
$(1) \Rightarrow (4)$ By Lmm.\ref{conjiso}, it is right.\\
$(4) \Rightarrow (5)$ By Lmm.\ref{conjiso}, for any $\sigma\in \widehat{G_i}$,  $\Theta_{\sigma}=0$, or $\Theta_{\sigma} $ is an irreducible representation of $G_j$.  By symmetry, we   assume $i=1$, $j=2$.  If there exists two different irreducible representations $\sigma_1$, $\sigma_2$ of $G_1$ such that $\Theta_{\sigma_1} \simeq \Theta_{\sigma_2}\neq 0$, written by $\delta$. Then $\Theta_{\delta}$ contains at least $\sigma_1\oplus \sigma_2$, a contradiction. Hence $\mathcal{M}_{1}$ is a multiplicity-free representation of  $G_{2}\times G_{2}$.\\
$(5) \Rightarrow (6)$ It is clearly right.\\
$(6) \Rightarrow (1)$ If there exists $\sigma \in \widehat{G_1}$, such that $\Theta_{\sigma}\neq 0$, and $\Theta_{\sigma}$ is not irreducible. In case  (a),  if $\Theta_{\sigma}$  contains two different irreducible components $\delta_1$, $\delta_2$, then $\Theta_{\delta_1}$, $\Theta_{\delta_2}$ both contain $\sigma$ as subrepresentations. Hence
$m_{G_1 \times G_1}( \mathcal{M}_{2}, \overline{\sigma}\otimes \sigma ) \geq 2$, a contradiction. In case (b), if $\Theta_{\sigma}$  contains an
 irreducible component $\delta$ with multiplicity bigger than $2$, then $m_{ G_2 \times G_2}( \mathcal{M}_1,  \overline{\delta}\otimes \delta ) \geq 2$, also a contradiction. By symmetry, $\pi$ is a multiplicity-free, $\theta_1$ and $\theta_2$ representation.
\end{proof}
 Recall the definition of a strong Gelfand pair from \cite{AAG}.
  \begin{lemma}
\begin{itemize}
\item[(1)] If  $\overline{\pi}\widehat{\otimes}\pi|_{\Delta_{G_1} \times (G_2\times G_2)}$, $\overline{\pi}\widehat{\otimes} \pi|_{(G_1\times G_1) \times \Delta_{G_2} }$ both are multiplicity-free representations, then $\pi$ is a theta representation of $G_1\times G_2$.
      \item[(2)] Assume that each  $(\Delta_{G_i}, G_i\times G_i)$ is  a strongly   Gelfand pair,  for   $i=1,2$.  Then  if $\pi$ is a theta representation, then  $\overline{\pi}\widehat{\otimes} \pi|_{\Delta_{G_1} \times (G_2\times G_2)}$, $\overline{\pi}\widehat{\otimes} \pi|_{(G_1\times G_1) \times \Delta_{G_2} }$ both are multiplicity-free.
\end{itemize}
 \end{lemma}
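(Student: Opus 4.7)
My plan is to translate both directions of the equivalence into conditions about the Hilbert--Schmidt modules $\mathcal{M}_1,\mathcal{M}_2$ and then invoke Proposition \ref{theta}, which characterises the theta property in terms of multiplicity-freeness of these modules.

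For part (1), the key identification is $\overline{\pi}\widehat{\otimes}\pi \simeq \HS-\Hom(V,V)$ as a representation of $(G_1\times G_2)\times(G_1\times G_2)$ acting by pre-/post-composition; on rank-one tensors, $\overline{v}\otimes w$ corresponds to $T_{v,w}(u)=\langle u,v\rangle w$. Under this identification, conjugation by $\pi(g_1,1)$ is exactly the action of $(g_1,g_1)\in\Delta_{G_1}\subseteq G_1\times G_1$, so the $\Delta_{G_1}$-invariant subspace of $\overline{V}\widehat{\otimes}V$ equals $\mathcal{M}_1=\HS-\Hom_{G_1}(V,V)$ as $G_2\times G_2$-modules. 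Since the invariants of a multiplicity-free unitary representation of $\Delta_{G_1}\times(G_2\times G_2)$ form a multiplicity-free $G_2\times G_2$-module, the first hypothesis gives that $\mathcal{M}_1$ is multiplicity-free over $G_2\times G_2$; by symmetry, $\mathcal{M}_2$ is multiplicity-free over $G_1\times G_1$. Proposition \ref{theta}, condition (5), then forces $\pi$ to be a theta representation.

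For part (2), I would start from the theta decomposition $V\simeq\widehat{\oplus}_{\sigma}W_\sigma\widehat{\otimes}\Theta_\sigma$ where $\sigma\mapsto\Theta_\sigma$ is injective with each $\Theta_\sigma$ irreducible. This yields
\[
\overline{V}\widehat{\otimes}V \;\simeq\; \widehat{\oplus}_{\sigma,\sigma'}(\overline{W_\sigma}\otimes W_{\sigma'})\widehat{\otimes}(\overline{\Theta_\sigma}\widehat{\otimes}\Theta_{\sigma'}),
\]
where $\Delta_{G_1}$ acts on the first factor as $\overline{\sigma}\otimes\sigma'$ restricted to the diagonal and $G_2\times G_2$ acts irreducibly on the second factor. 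The strong Gelfand hypothesis for $(\Delta_{G_1},G_1\times G_1)$ makes every $\overline{W_\sigma}\otimes W_{\sigma'}$ multiplicity-free as a $\Delta_{G_1}$-module. Given an irreducible $\tau\boxtimes(\mu_1\boxtimes\mu_2)$ of $G_1\times G_2\times G_2$, the only pair $(\sigma,\sigma')$ that can contribute has $\Theta_\sigma\simeq\overline{\mu_1}$ and $\Theta_{\sigma'}\simeq\mu_2$, which is unique by injectivity of the theta correspondence; combined with the Gelfand bound, this pins the total multiplicity at $\le 1$. The restriction to $(G_1\times G_1)\times\Delta_{G_2}$ follows by interchanging the roles of $G_1$ and $G_2$.

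The main obstacle is bookkeeping the various Hilbert-space actions. The two conceptually delicate points are: (a) confirming that the conjugation action on $\HS-\Hom(V,V)$ corresponds exactly to the natural $\Delta_{G_1}$ action on $\overline{V}\widehat{\otimes}V$, and (b) passing from multiplicity-freeness of a unitary $H_1\times H_2$-representation to multiplicity-freeness of its $H_1$-invariants as an $H_2$-module, which uses that the invariants form the trivial isotypic component even after Hilbert completion. Once these are in hand, both parts are bookkeeping with the Clifford-type decompositions already recorded in Lemmas \ref{Homdi}--\ref{conjiso} and Proposition \ref{theta}.
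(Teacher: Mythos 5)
Your proposal is correct and follows essentially the same route as the paper: part (1) reduces to multiplicity-freeness of $\mathcal{M}_i=\HS-\Hom_{G_i}(V,V)$ (the $\Delta_{G_i}$-invariants of $\overline{\pi}\widehat{\otimes}\pi$, cf. Lemma \ref{conjiso}) and then invokes Proposition \ref{theta}, while part (2) uses the decomposition $\overline{\pi}\widehat{\otimes}\pi\simeq\widehat{\oplus}_{\sigma,\sigma'}\overline{\sigma}\otimes\sigma'\otimes\overline{\theta_{\sigma}}\otimes\theta_{\sigma'}$ together with the strong Gelfand hypothesis and the injectivity of $\sigma\mapsto\theta_{\sigma}$. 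The only cosmetic difference is that in part (2) you treat the $\Delta_{G_1}\times(G_2\times G_2)$ restriction first whereas the paper treats $(G_1\times G_1)\times\Delta_{G_2}$ first; the two are mirror images and each appeals to symmetry for the other.
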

\begin{proof}
The first statement  follows from the above (6).  For the second statement,  $\overline{\pi}\widehat{\otimes} \pi\simeq \widehat{\oplus}_{\sigma, \sigma' \in \mathcal{R}_{G_1}(\pi) } \overline{\sigma} \otimes \sigma'\otimes \overline{\theta_{\sigma}}\otimes \theta_{\sigma'}$,  for $\theta_{\sigma}, \theta_{\sigma'}  \in \mathcal{R}_{G_2}(\pi)$. Under the assumption, $ [ \overline{\theta_{\sigma}}\otimes\theta_{\sigma'} ]|_{\Delta_{G_2}}$ is  multiplicity-free, so  $m_{(G_1\times G_1) \times \Delta_{G_2}}(\overline{\pi}\widehat{\otimes} \pi,  \overline{\sigma}\otimes \sigma' \otimes \eta)\leq 1$, for any $\eta\in \widehat{G_2}$. By symmetry, the second statement holds.
\end{proof}

\subsection{Main results}
Let $G_1$, $G_2$ be two compact groups with two normal subgroups $H_1$, $H_2$ respectively such that $\frac{G_1}{H_1} \simeq \frac{G_2}{H_2}$ under a map $\gamma$.  Then the graph $\Gamma(\gamma) \subseteq \frac{G_1}{H_1}\times  \frac{G_2}{H_2}$.  Let $p: G_1\times G_2 \longrightarrow \frac{G_1}{H_1}\times  \frac{G_2}{H_2}$ be the projection.  Let  $\Gamma $\footnote{If $\Gamma$ is replaced  by another closed subgroup $\Gamma'$ which also contains $H_1\times H_2$ as a normal subgroup, and $\frac{\Gamma'}{H_1\times H_2} \simeq \frac{\Gamma}{H_1\times H_2}$, then the main result also holds. Because we only needs to change the isomorphism $\gamma$ by another $\gamma'$ such that $\Gamma'$  is  the inverse image of $\Gamma(\gamma')$ in $G_1\times G_2$.}  be the inverse image of $\Gamma(\gamma)$  in $G_1\times G_2$ under the map $p$. Clearly, $\Gamma$ is a closed subgroup of $G_1\times G_2$. Let $(\rho, W)\in \Rep(\Gamma)$.
\begin{theorem}\label{Firstthm}
$\Res_{H_1\times H_2}^{\Gamma} \rho$ is a theta representation of $H_1\times H_2$ iff $\pi=\Ind_{\Gamma}^{G_1\times G_2} \rho$  is a theta representation of $G_1\times G_2$.
\end{theorem}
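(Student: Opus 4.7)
The plan is to pass the theta property between $\rho_0 := \Res_{H_1\times H_2}^{\Gamma}\rho$ and $\pi := \Ind_{\Gamma}^{G_1\times G_2}\rho$ using Mackey's restriction-of-induction formula combined with Mackey--Clifford theory (Theorems \ref{cliffordadmissible1}--\ref{cliffordadmissible2}) and the commutant/multiplicity-free characterizations of Proposition \ref{theta}. The structural starting point is that the graph $\Gamma(\gamma)$ projects surjectively onto each factor of $(G_1/H_1)\times(G_2/H_2)$, which gives
\[
\Gamma\cdot(G_1\times\{1\})=G_1\times G_2,\qquad \Gamma\cap(G_1\times\{1\})=H_1\times\{1\},
\]
and symmetrically for the $G_2$-factor. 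Mackey's restriction formula (with a single double coset) then yields
\[
\Res_{G_1}^{G_1\times G_2}\pi\simeq \Ind_{H_1}^{G_1}(\Res_{H_1}^{\Gamma}\rho),\qquad \Res_{G_2}^{G_1\times G_2}\pi\simeq \Ind_{H_2}^{G_2}(\Res_{H_2}^{\Gamma}\rho),
\]
so $\mathcal{R}_{G_i}(\pi)$ is governed by $\mathcal{R}_{H_i}(\rho_0)$ modulo the Rieffel equivalence $\sim_{(G_i,H_i)}$ of Section~\ref{RE}.

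Next, for each $\pi_1\in\mathcal{R}_{G_1}(\pi)$ I would pick $\sigma_1\in\mathcal{R}_{H_1}(\pi_1)\cap\mathcal{R}_{H_1}(\rho_0)$, write $V_{\pi_1}\simeq\pi_1\widehat{\otimes}\Theta_{\pi_1}$ and $W_{\sigma_1}\simeq\sigma_1\widehat{\otimes}\Theta_{\sigma_1}$ via Waldspurger's Lemma~\ref{waldspurger2}, and identify $\Theta_{\pi_1}\simeq \HS-\Hom_{G_1}(\pi_1,V_\pi)$ as a $G_2$-module. Applying Theorem~\ref{HSSP} (Hilbert--Schmidt Frobenius reciprocity) to $\pi=\Ind_\Gamma^{G_1\times G_2}\rho$ reduces this space to $\HS-\Hom_{H_1}(\sigma_1,\rho)$ viewed as a $\Gamma$-module, whose residual $G_2$-structure is mediated by $\gamma$. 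Read through Theorem~\ref{cliffordadmissible2}, the upshot is that $\Theta_{\pi_1}$ is a Mackey--Clifford lift of $\Theta_{\sigma_1}$: an irreducible $G_2$-representation whose restriction to $H_2$ decomposes into the $G_2$-orbit of $\Theta_{\sigma_1}$.

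With the matching $\Theta_{\pi_1}\leftrightarrow\Theta_{\sigma_1}$ in hand the two implications follow. If $\rho_0$ is theta, each $\Theta_{\sigma_1}$ is irreducible and the bigraphic/multiplicity-free hypotheses on $\rho_0$ force $\Theta_{\pi_1}$ to be a single irreducible $G_2$-representation; multiplicity-freeness of $\pi$ then follows from the commutant equality $\mathcal{A}=\mathcal{B}'$ of Proposition~\ref{theta}(2), since the commutants of the $G_i$-actions on $V_\pi$ arise from those of the $H_i$-actions on $W$ by an induction that preserves the double-commutant property. The converse runs in reverse: irreducibility of $\Theta_{\pi_1}$ restricts via Clifford to give irreducibility of $\Theta_{\sigma_1}$, and symmetry in the $G_2$-variable closes the argument.

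The main obstacle is the matching step: pinning down the Clifford data (stabilizers $I_{G_i}(\sigma_i)$, $2$-cocycles $\alpha_i$, projective extensions $\widetilde{\sigma}_i$) consistently on both sides. Because $\Gamma$ is the preimage of the graph of $\gamma$ rather than $H_1\times H_2$ itself, conjugation by $(g_1,1)\in G_1\times G_2$ on $\rho|_{H_1\times H_2}$ has a $\gamma(g_1H_1)$-mediated shadow in the $G_2$-factor, so the $G_1$-side stabilizer and cocycle determine the $G_2$-side ones via $\gamma$. Verifying this coherence — essentially the statement that $\Gamma/(H_1\times H_2)$ is a common quotient mapping isomorphically to both $G_i/H_i$ — is the delicate core of the proof, but once it is in place the two-sidedness of the theta property transfers cleanly under induction and restriction.
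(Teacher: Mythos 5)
Your outline correctly locates the relevant machinery (the structure of $\Gamma$ as the preimage of a graph, Mackey restriction--induction, Hilbert--Schmidt Frobenius reciprocity, Clifford theory), and the double-coset computation $\Gamma\cdot(G_1\times\{1\})=G_1\times G_2$, $\Gamma\cap(G_1\times\{1\})=H_1\times\{1\}$ is correct. But there are two genuine gaps. First, the step ``multiplicity-freeness of $\pi$ then follows from the commutant equality $\mathcal{A}=\mathcal{B}'$ \dots since the commutants of the $G_i$-actions on $V_\pi$ arise from those of the $H_i$-actions on $W$ by an induction that preserves the double-commutant property'' is not an argument: there is no general principle that induction transports a double-commutant identity from $(H_1,H_2)$ acting on $W$ to $(G_1,G_2)$ acting on $\Ind_\Gamma^{G_1\times G_2}W$, and establishing multiplicity one is precisely where the work of the theorem lies. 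The paper does not use Proposition \ref{theta} here at all; for the direction ($\Leftarrow$) it restricts to the intermediate subgroups $G_1\times H_2$ and $H_1\times G_2$ (not to $G_1\times\{1\}$, which forgets the $H_2$-equivariance you need to track $\Theta_{\pi_1}|_{H_2}$ against $\Theta_\sigma$), obtains the two chains of Hilbert--Schmidt isomorphisms (\ref{eq1}) and (\ref{eq2}) via Theorem \ref{HSSP}, and derives the inequalities $n_\delta\ge n_\sigma m$ and $n_\sigma\ge n_\delta m$, which force $m=m_{H_1\times H_2}(\rho,\sigma\otimes\delta)=1$. For ($\Rightarrow$) it shows directly that $\mathcal{R}_\Gamma(\rho)\cap\mathcal{R}_\Gamma(\pi_1\otimes\pi_2)$ is a singleton $\{\rho_0\}$ with $m_\Gamma(\rho,\rho_0)=m_\Gamma(\rho_0,\pi_1\otimes\pi_2)=1$, using that a second constituent $\rho_i$ would contribute a second copy of $\sigma\otimes\delta$ to $\rho|_{H_1\times H_2}$.

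Second, the step you yourself flag as ``the delicate core'' --- the coherent matching of stabilizers, cocycles and projective extensions across $\gamma$, which is what pins down $\pi_2$ uniquely from $\pi_1$ --- is described but not carried out, and the description stops short of the actual mechanism. In the paper this is done by writing $\widetilde{\sigma}\simeq\sigma\otimes\sigma_1$, $\widetilde{\delta}\simeq\delta\otimes\delta_1$, $\widetilde{\sigma\otimes\delta}\simeq\sigma\otimes\delta\otimes\rho_{0,(\sigma,\delta)}$ as projective representations, observing that $\dim\rho_{0,(\sigma,\delta)}=1$, computing $\Hom_\Gamma(\rho_0,\pi_1\otimes\pi_2)$ as a space of $\frac{I_\Gamma(\sigma\otimes\delta)}{H_1\times H_2}$-invariants, and invoking Lemma \ref{isom} to conclude that $\delta_1$ is linearly determined by $\sigma_1$ (and the one-dimensional twist), so that Theorem \ref{cliffordadmissible2}(4) forces $\widetilde{\delta}\simeq\widetilde{\delta}'$ and hence $\pi_2\simeq\pi_2'$. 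Without this computation the graphic property of $\pi$ is asserted rather than proved. So the skeleton is right, but the two load-bearing steps are missing.
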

We divide the proof in the next two subsections.

\subsubsection{$(\Rightarrow)$}
 Assume $\pi_1\otimes \pi_2=\pi_1\widehat{\otimes}\pi_2 \in \mathcal{R}_{G_1\times G_2}(\pi) $. By Frobenius reciprocity(Thm.\ref{KT}), $0\neq \Hom_{G_1\times G_2}(\pi, \pi_1\otimes \pi_2) \simeq \Hom_{\Gamma}(\rho, \pi_1\otimes \pi_2)$. So we can find $\rho_{0}\in \mathcal{R}_{\Gamma}(\rho)\cap  \mathcal{R}_{\Gamma}( \pi_1\otimes \pi_2)$. Assume $\sigma\otimes \delta\in \mathcal{R}_{H_1\times H_2}(\rho_{0})$. Since $m_{H_1\times H_2}(\rho, \sigma\otimes \delta)=1$, $m_{H_1\times H_2}(\rho_{0}, \sigma\otimes \delta)=1=m_{\Gamma}(\rho, \rho_0)$. For any $g_1\in G_1$, assume $(g_1, \gamma(g_1))\in \Gamma$. Then  $\rho_{0}^{(g_1, \gamma(g_1))} \simeq \rho_{0}$ as $\Gamma$-modules, and also  as $H_1\times H_2$-modules. Hence $\sigma^{g_1} \simeq \sigma$ iff $\delta^{\gamma(g_1)} \simeq \delta$. So $\gamma: \frac{I_{G_1}(\sigma)}{H_1} \simeq \frac{I_{G_2}(\delta)}{H_2}$ with the graph $\Gamma(\gamma)_{(\sigma, \delta)}= [(I_{G_1}(\sigma) \times I_{G_2}(\delta)) \cap\Gamma]/{H_1\times H_2}$. Moreover, $ [(I_{G_1}(\sigma) \times I_{G_2}(\delta)) \cap\Gamma]=I_{\Gamma}(\sigma\otimes \delta)$. Assume $\pi_1\simeq \Ind^{G_1}_{I_{G_1}(\sigma)}\widetilde{\sigma}$, $\pi_2\simeq \Ind^{G_2}_{I_{G_2}(\delta)}\widetilde{\delta}$, $\rho_0\simeq \Ind^{\Gamma}_{I_{\Gamma}(\sigma\otimes \delta)}\widetilde{\sigma\otimes \delta}$. Assume $\widetilde{\sigma}\simeq \sigma\otimes \sigma_1$, $\widetilde{\delta} \simeq \delta\otimes \delta_1$, $\widetilde{\sigma\otimes \delta} \simeq \sigma\otimes \delta\otimes \rho_{0, (\sigma, \delta)}$, as projective representations. Assume $\sigma$, $\delta$, $\sigma_1$, $\delta_1$, $\rho_{0, (\sigma, \delta)}$ are $\alpha$-projective, $\beta$-projective, $\alpha^{-1}$-projective, $\beta^{-1}$-projective, $\alpha^{-1}\times \beta^{-1}$-projective representations respectively. Since $m_{H_1\times H_2}(\rho_0, \sigma\otimes \delta)=1$, $m_{H_1\times H_2}(\widetilde{\sigma\otimes \delta}, \sigma\otimes \delta)=1$ and $\dim \rho_{0, (\sigma, \delta)}=1$.

 Hence $\Hom_{\Gamma}(\rho_0, \pi_1\otimes \pi_2)\simeq  \Hom_{\Gamma}(\Ind^{\Gamma}_{I_{\Gamma}(\sigma\otimes \delta)}\widetilde{\sigma\otimes \delta}, \Ind_{I_{G_1}(\sigma)}^{G_1} \widetilde{\sigma} \otimes  \Ind_{I_{G_2}(\delta)}^{G_2} \widetilde{\delta})\simeq \Hom_{I_{\Gamma}(\sigma\otimes \delta)}(\widetilde{\sigma\otimes \delta}, \Ind_{I_{G_1}(\sigma)}^{G_1} \widetilde{\sigma} \otimes  \Ind_{I_{G_2}(\delta)}^{G_2} \widetilde{\delta}) \simeq \Hom_{I_{\Gamma}(\sigma\otimes \delta)}(\widetilde{\sigma\otimes \delta}, \widetilde{\sigma} \otimes \widetilde{\delta}) \simeq [\Hom_{H_1\times H_2}(\widetilde{\sigma\otimes \delta}, \sigma\otimes \sigma_1\otimes \delta\otimes \delta_1)]^{\frac{I_{\Gamma}(\sigma\otimes \delta)}{H_1\times H_2}} \simeq  [\Hom_{H_1\times H_2}(\widetilde{\sigma\otimes \delta}, \sigma\otimes  \delta) \otimes (\sigma_1\otimes  \delta_1)]^{\frac{I_{\Gamma}(\sigma\otimes \delta)}{H_1\times H_2}}$. Note that $\frac{I_{\Gamma}(\sigma\otimes \delta)}{H_1\times H_2}$ acts projectively on  $\Hom_{H_1\times H_2}(\widetilde{\sigma\otimes \delta}, \sigma\otimes  \delta)$, which is of dimension one.   Hence by Lmm.\ref{isom}, (1) $\sigma_1$ is projectively isomorphic to $ \overline{\delta_1}\circ \gamma$, where $\gamma: \frac{I_{G_1}(\sigma)}{H_1}\simeq \frac{I_{G_2}(\delta)}{H_2}$; (2) $\delta_1$ linearly depends on $\sigma_1$ and  the action of $\frac{I_{\Gamma}(\sigma\otimes \delta)}{H_1\times H_2}$ on $\Hom_{H_1\times H_2}(\widetilde{\sigma\otimes \delta}, \sigma\otimes  \delta)$. Moreover, $m_{\Gamma}(\rho_0, \pi_1\otimes \pi_2)=1$.

Assume $\pi_1\otimes \pi_2|_{\Gamma} \simeq \rho_0\oplus (\oplus_{i=1}^n\rho_i)$, for some $\rho_i\in \widehat{\Gamma}$. If for some other $i \geq 1$,  $\rho_i \in \mathcal{R}_{\Gamma}(\rho)\cap  \mathcal{R}_{\Gamma}( \pi_1\otimes \pi_2)$.  Assume $\sigma^{g_1} \otimes \delta^{g_2} \in \mathcal{R}_{H_1\times H_2}(\rho_i)\cap  \mathcal{R}_{H_1\times H_2}( \pi_1\otimes \pi_2)$. Assume $(g_1^{-1}, \gamma(g_1^{-1})) \in \Gamma$.  Then $\sigma \otimes \delta^{g_2\gamma(g_1^{-1})} \simeq (\sigma^{g_1} \otimes \delta^{g_2})^{(g_1^{-1}, \gamma(g_1^{-1}))} \in \mathcal{R}_{H_1\times H_2}(\rho_i) \subseteq \mathcal{R}_{H_1\times H_2}(\rho)$. Since $\rho|_{H_1\times H_2}$ is a theta representation, $\delta^{g_2\gamma(g_1^{-1})} \simeq \delta$, and $\sigma\otimes \delta \in \mathcal{R}_{H_1\times H_2}(\rho_i)$. Hence $m_{H_1\times H_2}(\rho, \sigma\otimes \delta) \geq m_{H_1\times H_2}(\rho_0, \sigma\otimes \delta)+m_{H_1\times H_2}(\rho_i, \sigma\otimes \delta) \geq 2$, a contradiction. Therefore $\mathcal{R}_{\Gamma}(\rho)\cap  \mathcal{R}_{\Gamma}( \pi_1\otimes \pi_2)=\{ \rho_0\}$.  Since $m_{\Gamma}(\rho, \rho_0)=1=m_{\Gamma}(\rho_0, \pi_1\otimes \pi_2)$,  $m_{\Gamma}(\rho, \pi_1\otimes \pi_2)=1$, which implies that $m_{G_1\times G_2}(\pi, \pi_1\otimes \pi_2)=1$.

If $\pi_1\otimes \pi_2' \in  \mathcal{R}_{G_1\times G_2}(\pi)$, $0\neq \Hom_{G_1\times G_2}(\pi, \pi_1\otimes \pi'_2) \simeq \Hom_{\Gamma}(\rho, \pi_1\otimes \pi'_2)$. Hence $ \Hom_{H_1\times H_2}(\rho, \pi_1\otimes \pi'_2)\neq 0 $,  then $\Hom_{H_1\times H_2}(\rho, \sigma'\otimes \pi'_2)\neq 0$, for some $\sigma' \simeq \sigma^{g_1}\leq \pi_1|_{H_1}$;  it  implies $\Hom_{H_1\times H_2}(\rho, \sigma\otimes \pi'_2)\neq 0 $. Since $\rho$ is a theta representation,  $\delta \in  \mathcal{R}_{H_2}(\pi'_2)$.  Similar to the above proof, $\pi_2'\simeq \Ind_{I_{G_2}(\delta)}^{G_2} \widetilde{\delta}'$, with $\widetilde{\delta}' \simeq  \delta\otimes \delta_1'$ as projective representations. Moreover, by the above proof and Lmm.\ref{isom}, through $\sigma_1$, we can derive that  $\delta_1'$ is linearly equivalent to $ \delta_1$. By Thm.\ref{cliffordadmissible2}(4), $\widetilde{\delta} \simeq \widetilde{\delta}'$ and then $\pi_2\simeq \pi_2'$.  By duality, $\pi_1$ is also determined by $\pi_2$ uniquely.
\subsubsection{$(\Leftarrow)$}
Assume $\sigma\otimes \delta\in \mathcal{R}_{H_1\times H_2}(\rho)$. Note that $G_1\times G_2=\Gamma (G_1\times H_2)=\Gamma (H_1\times G_2)$. Then  $\Res^{G_1\times G_2}_{G_1\times H_2}\pi \simeq \Ind_{H_1\times H_2}^{G_1\times H_2} \rho$,  $\Res^{G_1\times G_2}_{H_1\times G_2}\pi \simeq \Ind_{H_1\times H_2}^{H_1\times G_2} \rho$. Assume $\sigma \in \mathcal{R}_{H_1}(\pi_1)$, for some $(\pi_1, V_1) \in \widehat{G_1}$. By Thm.\ref{HSSP},
\begin{equation}\label{eq1}
\begin{aligned}
0\neq \HS-\Hom_{H_1\times H_2}(\rho, \pi_1\otimes \delta)\simeq \HS-\Hom_{G_1\times H_2}( \Ind_{H_1\times H_2}^{G_1\times H_2} \rho, \pi_1\otimes\delta)\\
 \simeq \HS-\Hom_{G_1\times H_2}( \Res^{G_1\times G_2}_{G_1\times H_2}\pi, \pi_1\otimes\delta) \simeq \HS-\Hom_{G_1\times G_2}( \pi, \pi_1\otimes \Ind_{H_2}^{G_2}\delta).
 \end{aligned}
 \end{equation}
 So there exists a unique $\pi_2\in \mathcal{R}_{G_2}(\Ind_{H_2}^{G_2}\delta)$, such that $\pi_1\otimes \pi_2\in \mathcal{R}_{G_1\times G_2}(\pi)$. Certainly, $\delta\in \mathcal{R}_{H_2}(\pi_2)$.  Assume $n_{\delta}=m_{H_2}(\pi_2, \delta)$. Then $m_{H_1\times H_2}(\rho, \sigma\otimes \delta) \leq m_{H_1\times H_2}(\rho, \pi_1\otimes \delta) =m_{G_1\times G_2}( \pi, \pi_1\otimes \Ind_{H_2}^{G_2}\delta)=m_{G_1\times G_2}( \pi, n_{\delta}\pi_1\otimes \pi_2)=n_{\delta} <+\infty$. Hence $\Res_{H_1\times H_2}^{\Gamma} \rho$ is an admissible representation.  Similarly,
 \begin{equation}\label{eq2}
 0\neq \HS-\Hom_{H_1\times H_2}(\rho,  \sigma \otimes \pi_2)  \simeq \HS-\Hom_{G_1\times G_2}( \pi, \Ind_{H_1}^{G_1}\sigma\otimes \pi_2).
 \end{equation}
  Let  us write $n_{\sigma}=m_{H_1}(\sigma, \pi_1)$, and $m=m_{H_1\times H_2}(\rho, \sigma\otimes \delta)$. Then by (\ref{eq1}), $n_{\delta}\geq n_{\sigma}m$, and by (\ref{eq2}), $n_{\sigma}\geq n_{\delta}m$. Since $m, n_{\sigma}, n_{\delta}$ all are finite positive integers, $m=1$ and $n_{\sigma}=n_{\delta}$.  Go back to (\ref{eq1}), $\mathcal{R}_{H_1\times H_2}(\rho) \cap \mathcal{R}_{H_1\times H_2}( \pi_1\otimes \delta)=\{ \sigma\otimes \delta\}$. If $\sigma'\otimes \delta \in \mathcal{R}_{H_1\times H_2}(\rho)$, then $0\neq \HS-\Hom_{H_1\times H_2}(\rho,  \sigma' \otimes \pi_2)  \simeq \HS-\Hom_{G_1\times G_2}( \pi, \Ind_{H_1}^{G_1}\sigma'\otimes \pi_2)$. Hence $\pi_1\in \mathcal{R}_{G_1}(\Ind_{H_1}^{G_1}\sigma')$, and  $\sigma' \in \mathcal{R}_{H_1}(\pi_1)$. Go back to (\ref{eq1}), $\sigma'\simeq \sigma$.  In other words, $\sigma$ is uniquely determined by $\delta$. By duality, we can obtain the whole result.$\Box$\\

  Let $\Omega_i(-,-)$ be a $2$-cocycle representing some class in $\Ha_m^2(G_i, \T)$, for each $i=1,2$. Let $\omega_i=\Omega_i|_{H_i\times H_i}$.  Let $\Omega=\Omega_1\times \Omega_2 $, $\omega=\omega_1\times \omega_2$, and $\omega_{\Gamma}=\Omega|_{\Gamma}$.  Assume now $(\rho, W)$ is an $\omega_{\Gamma}$-projective representation of $\Gamma$.
\begin{corollary}\label{twoext}
$\Res_{H_1\times H_2,\omega}^{\Gamma, \omega_{\Gamma}} \rho$ is an $\omega$-projective theta representation of $H_1\times H_2$ iff $\Ind_{\Gamma,\omega_{\Gamma}}^{G_1\times G_2,\Omega} \rho$  is an $\Omega$-projective theta representation of $G_1\times G_2$.
\end{corollary}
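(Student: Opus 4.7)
The plan is to deduce this projective version from Theorem \ref{Firstthm} by lifting everything to central $\T$-extensions, exactly as envisioned by the remark following the definition of theta representations. First, I form the Polish central extensions $(G_1\times G_2)^{\Omega}$, $\Gamma^{\omega_{\Gamma}}$ and $(H_1\times H_2)^{\omega}$ as in Section \ref{Indrepre}. Because $\omega_{\Gamma}=\Omega|_{\Gamma}$ and $\omega=\Omega|_{H_1\times H_2}$, the three extensions fit into closed embeddings
\[
(H_1\times H_2)^{\omega}\hookrightarrow \Gamma^{\omega_{\Gamma}}\hookrightarrow (G_1\times G_2)^{\Omega},
\]
all sharing the same central circle $\T$. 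I lift $(\rho,W)$ to its associated ordinary representation $(\rho^{\omega_{\Gamma}},W^{\omega_{\Gamma}})$ of $\Gamma^{\omega_{\Gamma}}$, which restricts to $\id_{\T}$ on the center.

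Second, I check that the ordinary hypotheses of Theorem \ref{Firstthm} are fulfilled by the lifted data. Since the central $\T$ sits inside $(H_1\times H_2)^{\omega}$, the subgroup $(H_1\times H_2)^{\omega}$ is normal and closed in $(G_1\times G_2)^{\Omega}$ with quotient canonically isomorphic to $(G_1/H_1)\times (G_2/H_2)$, and the preimage of the graph $\Gamma(\gamma)$ under the natural projection $(G_1\times G_2)^{\Omega}\longrightarrow (G_1/H_1)\times (G_2/H_2)$ is precisely $\Gamma^{\omega_{\Gamma}}$. So the quadruple $((G_1\times G_2)^{\Omega},\Gamma^{\omega_{\Gamma}},(H_1\times H_2)^{\omega},\gamma)$ fits the template of Theorem \ref{Firstthm}.

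Third, I transfer induction and restriction. By the definition of projective induction in Section \ref{Indrepre}, the ordinary induced representation $\Ind_{\Gamma^{\omega_{\Gamma}}}^{(G_1\times G_2)^{\Omega}}\rho^{\omega_{\Gamma}}$ is exactly the lift of $\Ind_{\Gamma,\omega_{\Gamma}}^{G_1\times G_2,\Omega}\rho$, and likewise $\Res^{\Gamma^{\omega_{\Gamma}}}_{(H_1\times H_2)^{\omega}}\rho^{\omega_{\Gamma}}$ is the lift of $\Res_{H_1\times H_2,\omega}^{\Gamma,\omega_{\Gamma}}\rho$. Following the remark that projective theta representations are declared through their central extensions, an $\Omega$-projective representation of $G_1\times G_2$ is a theta representation iff its ordinary lift to $(G_1\times G_2)^{\Omega}$ is, because the $\mathcal{R}$-sets, the multiplicities, and the $\Theta$-functors all match under the lifting (the central $\T$ acts trivially on both sides, so only representations on which $\T$ acts as $\id_{\T}$ appear). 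The same correspondence holds for $\omega$-projective representations of $H_1\times H_2$. Applying Theorem \ref{Firstthm} to $\rho^{\omega_{\Gamma}}$ now yields the desired equivalence.

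The main subtlety is the bookkeeping for the central $\T$: one has to confirm that the three cocycles $\Omega$, $\omega_{\Gamma}$, $\omega$ really do share a single common circle inside $(G_1\times G_2)^{\Omega}$, so that the displayed inclusions of extensions are genuine closed embeddings compatible with the corresponding inclusions of the underlying groups, and that the theta condition for projective representations is \emph{defined} (rather than merely implied) by pullback to the extension groups. Once this is verified, no additional analytic work is needed and the corollary follows directly from Theorem \ref{Firstthm}.
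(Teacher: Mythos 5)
Your overall strategy --- lift everything to central $\T$-extensions and invoke Theorem \ref{Firstthm} --- is the same as the paper's, but the group to which you apply Theorem \ref{Firstthm} is the wrong one, and this is a genuine gap rather than bookkeeping. Theorem \ref{Firstthm}, and indeed the very notion of a theta (or graphic) representation, is formulated for representations of a \emph{direct product} of two compact groups: one needs the two factors separately in order to speak of $\mathcal{R}_{G_1}(\pi)$, $\mathcal{R}_{G_2}(\pi)$ and the functor $\pi_1\mapsto\Theta_{\pi_1}$. The single-circle extension $(G_1\times G_2)^{\Omega}$ that you take as ambient group is \emph{not} a direct product of two groups: since $\Omega=\Omega_1\times\Omega_2$, it is the quotient of $G_1^{\Omega_1}\times G_2^{\Omega_2}$ by the antidiagonal circle $\{(t,t^{-1})\}$, and the two ``factors'' inside it share one central $\T$. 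Likewise $(H_1\times H_2)^{\omega}$ is not of the form $\mathcal{H}_1\times\mathcal{H}_2$. Your claims that $(G_1\times G_2)^{\Omega}/(H_1\times H_2)^{\omega}\simeq(G_1/H_1)\times(G_2/H_2)$ and that $\Gamma^{\omega_{\Gamma}}$ is the preimage of the graph are correct, but the quadruple $((G_1\times G_2)^{\Omega},\Gamma^{\omega_{\Gamma}},(H_1\times H_2)^{\omega},\gamma)$ still does not fit the template of Theorem \ref{Firstthm}, and the assertion that ``an $\Omega$-projective representation of $G_1\times G_2$ is a theta representation iff its ordinary lift to $(G_1\times G_2)^{\Omega}$ is'' is not even well-formed until you say what a theta representation of the non-product group $(G_1\times G_2)^{\Omega}$ means. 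The subtlety you flag at the end --- that the three cocycles share a single common circle --- is precisely what breaks your reduction, not what saves it.

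The paper repairs this by working one level up: it takes the honest product $G_1^{\Omega_1}\times G_2^{\Omega_2}$ (central torus $\T\times\T$) with normal subgroup $H_1^{\omega_1}\times H_2^{\omega_2}$. Since $H_i^{\omega_i}$ contains the central circle of $G_i^{\Omega_i}$, one still has $G_i^{\Omega_i}/H_i^{\omega_i}\simeq G_i/H_i$; the transported isomorphism $\gamma^{\Omega}$ has graph with preimage $\widetilde{\Gamma^{\omega_{\Gamma}}}$, and Theorem \ref{Firstthm} applies verbatim to $\widetilde{\rho^{\omega_{\Gamma}}}=\rho^{\omega_{\Gamma}}\circ\kappa$. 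One then descends to the single-circle picture through the quotient maps $\kappa$, $\kappa'$ killing the antidiagonal circle, using the compatibility $\Ind^{G_1^{\Omega_1}\times G_2^{\Omega_2}}_{\widetilde{\Gamma^{\omega_{\Gamma}}}}\widetilde{\rho^{\omega_{\Gamma}}}\simeq\bigl[\Ind^{(G_1\times G_2)^{\Omega}}_{\Gamma^{\omega_{\Gamma}}}\rho^{\omega_{\Gamma}}\bigr]\circ\kappa'$ (as in the proof of Lemma \ref{decom}); the antidiagonal circle acts trivially on every representation in sight, so nothing is lost in the descent. Your argument becomes correct once you insert this intermediate product of the two separate extensions; as written, the crucial application of Theorem \ref{Firstthm} is made to a group for which neither its hypotheses nor its conclusion are meaningful.
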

\begin{proof}
We can lift both sides to the corresponding ordinary representations. In that case, the result has been proved above.  More precisely, let $H_i^{\omega_i}$, $G_i^{\Omega_i}$ be the lifting groups.  It can be seen that $H_i^{\omega_i}$ is a normal subgroup of $G_i^{\Omega_i}$. Now let $\gamma^{\Omega}$ be the composite map: $\frac{G^{\Omega_1}_1}{H^{\omega_1}_1} \simeq \frac{G_1}{H_1} \stackrel{\gamma}{\simeq } \frac{G_2}{H_2} \simeq \frac{G^{\Omega_2}_2}{H^{\omega_2}_2}$. Let us write  the graph $\Gamma(\gamma^{\Omega})\simeq \frac{\widetilde{\Gamma^{\omega_{\Gamma}}}}{H^{\omega_1}_1\times H^{\omega_2}_2}$.  Then there exists a short exact sequence
$$1\longrightarrow \T \stackrel{\iota}{\longrightarrow} \widetilde{\Gamma^{\omega_{\Gamma}}} \stackrel{\kappa}{\longrightarrow} \Gamma^{\omega_{\Gamma}} \longrightarrow 1.$$
Let $\rho^{\omega_{\Gamma}}$ be the lifting representation of $\Gamma^{\omega_{\Gamma}}$ from $\rho$. Through $\kappa$, let $\widetilde{\rho^{\omega_{\Gamma}}}=\rho^{\omega_{\Gamma}} \circ \kappa$.   Similarly, $1\longrightarrow \T \stackrel{\iota}{\longrightarrow} H_1^{\omega_1} \times H_2^{\omega_2} \stackrel{\kappa}{\longrightarrow} (H_1\times H_2)^{\omega} \longrightarrow 1$,  $1\longrightarrow \T \stackrel{\iota}{\longrightarrow} G_1^{\Omega_1} \times G_2^{\Omega_2} \stackrel{\kappa}{\longrightarrow} (G_1\times G_2)^{\Omega} \longrightarrow 1$ both are  exact sequences of groups. Let $\kappa': \frac{  G_1^{\Omega_1} \times G_2^{\Omega_2} }{\T} \simeq(G_1\times G_2)^{\Omega}$.   Moreover, $\Ind^{G_1^{\Omega_1}\times G_2^{\Omega_2}}_{\widetilde{\Gamma^{\omega_{\Gamma}}}} \widetilde{\rho^{\omega_{\Gamma}}} \simeq [\Ind^{(G_1\times G_2)^{\Omega}}_{\Gamma^{\omega_{\Gamma}}} \rho^{\omega_{\Gamma}}]\circ \kappa'$.
 By the above theorem, $\Res_{H_1^{\omega_1}\times H_2^{\omega_2}}^{\widetilde{\Gamma^{\omega_{\Gamma}}}} \widetilde{\rho^{\omega_{\Gamma}}}$ is a $\theta$-representation iff $\Ind^{G_1^{\Omega_1}\times G_2^{\Omega_2}}_{\widetilde{\Gamma^{\omega_{\Gamma}}}} \widetilde{\rho^{\omega_{\Gamma}}}$  is a $\theta$-representation. This will imply the result.
\end{proof}
\subsubsection{Related to Rieffel Equivalence}
 Keep the above notations.   Recall Rieffel Equivalence from section \ref{RE}. For simplicity, let us write $H=H_1\times H_2$, $G=G_1\times G_2$.

 \begin{lemma}
 For $\rho'\in \widehat{\Gamma}$, $\mathcal{R}_{G}(\Ind_{\Gamma}^{G}\rho')$ (resp. $\mathcal{R}_{G_i}(\Ind_{\Gamma}^{G}\rho')$, resp. $\mathcal{R}_{H}(\rho')$, resp.$\mathcal{R}_{H_i}(\rho')$) only contains one equivalence class in  $\frac{\widehat{G}}{\sim_{(G, H)}}$ (resp. $\frac{\widehat{G_i}}{\sim_{(G_i, H_i)}}$, resp. $\frac{\widehat{H}}{\sim_{(G, H)}}$, resp. $\frac{\widehat{H_i}}{\sim_{(G_i, H_i)}}$), $i=1, 2$.
 \end{lemma}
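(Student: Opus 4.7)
The plan is to reduce all four assertions to Clifford's theorem (Theorem \ref{cliffordadmissible1}) applied to the appropriate normal-subgroup pair, combined with Frobenius reciprocity (Theorem \ref{KT}) and the characterization of $\sim_{(G,H)}$-classes from the proof of Lemma \ref{eqiuv}: two elements are equivalent precisely when they are $G$-conjugate (resp.\ $G_i$-conjugate).

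For the cases $\mathcal{R}_H(\rho')$ and $\mathcal{R}_{H_i}(\rho')$, note first that because $\Gamma$ is the preimage in $G$ of $\Gamma(\gamma)$ under $p$, it contains $H=H_1\times H_2$ as a normal subgroup, and the projection $\Gamma\to G_i$ is surjective (so each $H_i\subseteq H$ is also normal in $\Gamma$). Clifford's theorem applied to $(H,\Gamma)$ and the irreducible $\rho'$ then shows that $\mathcal{R}_H(\rho')$ is a single $\Gamma$-orbit, hence a single $G$-orbit, hence a single class in $\widehat{H}/\sim_{(G,H)}$. For $\mathcal{R}_{H_i}(\rho')$, every irreducible arises as the $i$-th tensor factor of some element of $\mathcal{R}_H(\rho')$; since the $\Gamma$-action on $H_i$ factors through the surjection $\Gamma\to G_i$, these factors form a single $G_i$-orbit in $\widehat{H_i}$, i.e.\ a single class in $\widehat{H_i}/\sim_{(G_i,H_i)}$.

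For $\mathcal{R}_G(\Ind_\Gamma^G\rho')$, take $\pi_1,\pi_2$ in this set. Frobenius reciprocity gives embeddings $\rho'\hookrightarrow\pi_j|_\Gamma$ for $j=1,2$; picking any $\sigma\in\mathcal{R}_H(\rho')$ (non-empty by Theorem \ref{cliffordadmissible1}(1)), one obtains $\sigma\in\mathcal{R}_H(\pi_1)\cap\mathcal{R}_H(\pi_2)$, and Lemma \ref{eqiuv}(1) yields $\mathcal{R}_H(\pi_1)=\mathcal{R}_H(\pi_2)$, i.e.\ $\pi_1\sim_{(G,H)}\pi_2$.

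Finally, for $\mathcal{R}_{G_i}(\Ind_\Gamma^G\rho')$, given $\pi_{i,1},\pi_{i,2}$ in this set, extract from the $G$-isotypic decomposition of $\Ind_\Gamma^G\rho'$ two elements $\Pi_1,\Pi_2\in\mathcal{R}_G(\Ind_\Gamma^G\rho')$ with $\pi_{i,j}$ embedding into $\Pi_j|_{G_i}$. The previous case yields $\Pi_1\sim_{(G,H)}\Pi_2$, so $\mathcal{R}_H(\Pi_1)=\mathcal{R}_H(\Pi_2)$; projecting on the $i$-th factor gives $\mathcal{R}_{H_i}(\Pi_1)=\mathcal{R}_{H_i}(\Pi_2)$, each side being a single $G_i$-orbit since $G=G_1\times G_2$ acts on $H_i$ through $G_i$. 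Each $\mathcal{R}_{H_i}(\pi_{i,j})$ is itself a single $G_i$-orbit (Clifford for $H_i\unlhd G_i$) contained in $\mathcal{R}_{H_i}(\Pi_j)$, so by equality of $G_i$-orbits all these sets coincide and $\pi_{i,1}\sim_{(G_i,H_i)}\pi_{i,2}$. The only real subtlety is this last step: one must track that the projection of a $G$-orbit in $\widehat{H}$ to $\widehat{H_i}$ is itself a single $G_i$-orbit, which uses essentially the product form $G=G_1\times G_2$ together with the normality of $H_i$ in $G_i$.
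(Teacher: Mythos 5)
Your proof is correct and follows essentially the same route as the paper: both arguments come down to showing that everything in each of the four sets shares a common irreducible $H$- (resp.\ $H_i$-) constituent up to $G$- (resp.\ $G_i$-) conjugacy, via Frobenius reciprocity and Clifford theory for $H\unlhd\Gamma$, and then invoking Lemma \ref{eqiuv}. The paper phrases the induction cases slightly more economically by fixing one $\sigma\otimes\delta\in\mathcal{R}_H(\rho')$ and using the containment $\Ind_{\Gamma}^{G}\rho'\hookrightarrow\Ind_H^{G}(\sigma\otimes\delta)$, but the content is the same.
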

 \begin{proof}
 1) For $\sigma\otimes \delta\in  \mathcal{R}_{H}(\rho')$, $\rho'  \hookrightarrow \Ind_H^{\Gamma} (\sigma \otimes \delta)$,  so $\Ind_{\Gamma}^{G}\rho' \hookrightarrow \Ind_H^{G} (\sigma \otimes \delta)$. Hence $\mathcal{R}_{G}(\Ind_{\Gamma}^{G}\rho') \subseteq \mathcal{R}_{G}(\Ind_H^{G} (\sigma \otimes \delta))$,   $\mathcal{R}_{G_1}(\Ind_{\Gamma}^{G}\rho') \subseteq \mathcal{R}_{G_1}(\Ind_{H_1}^{G_1} \sigma)$,  $\mathcal{R}_{G_2}(\Ind_{\Gamma}^{G}\rho') \subseteq \mathcal{R}_{G_2}(\Ind_{H_2}^{G_2} \delta)$. So the first three statements hold.\\
 2) Assume $\pi_1\otimes \pi_2 \in  \mathcal{R}_{G}(\Ind_{\Gamma}^{G}\rho')$. Then $\rho'  \hookrightarrow \Res_{\Gamma}^G (\pi_1\otimes \pi_2)$.  For any  $\sigma\otimes \delta\in  \mathcal{R}_{H}(\rho')$, $ \sigma\otimes \delta \hookrightarrow \Res^{\Gamma}_H \rho' \hookrightarrow \Res^{G}_H (\pi_1\otimes \pi_2)$. Hence the last three statements hold.
 \end{proof}
  \begin{proposition}
 For $(\rho, W)\in \Rep(\Gamma)$, $\Res_{H}^{\Gamma} \rho$ is a bigraphic   representation of $H$ with respect to $\frac{\widehat{H}}{\sim_{(G,H)}}$ iff $\Ind_{\Gamma}^{G} \rho$  is a   bigraphic representation of $G$ with respect to $\frac{\widehat{G}}{\sim_{(G, H)}}$.
\end{proposition}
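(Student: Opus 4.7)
The plan is to mirror the proof of Theorem \ref{Firstthm}, tracking only Rieffel classes and dropping all multiplicity bookkeeping and cocycle analysis. Two ingredients do the work: Lemma \ref{GHEQ} (which transports $\sim_{(G_i,H_i)}$ between $\widehat{H_i}$ and $\widehat{G_i}$ through $\Ind$ and $\Res$), and the preceding unlabelled lemma (which guarantees that $\mathcal{R}_{H_i}(\rho')$ and $\mathcal{R}_{G_i}(\Ind_\Gamma^G \rho')$ each lie inside a single $\sim_{(G_i,H_i)}$-class for every $\rho' \in \widehat{\Gamma}$). Since $H = H_1\times H_2$ and $G = G_1\times G_2$, the Rieffel classes in $\widehat{G}$ and $\widehat{H}$ factor along the two components, so ``bigraphic mod $\sim_{(G,H)}$'' amounts to a $\theta_1$ and a $\theta_2$ condition at the level of $\sim_{(G_i,H_i)}$-classes. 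It suffices to establish each direction on the $\theta_1$ side, the $\theta_2$ side following by symmetry.

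For $(\Leftarrow)$, assume $\pi = \Ind_\Gamma^G \rho$ is bigraphic mod $\sim$. Given $\sigma \otimes \delta \in \mathcal{R}_H(\Res_H^\Gamma \rho)$, the Hilbert-Schmidt Frobenius computations (\ref{eq1}) and (\ref{eq2}) from the proof of Theorem \ref{Firstthm} produce $\pi_1 \otimes \pi_2 \in \mathcal{R}_G(\pi)$ with $\sigma \in \mathcal{R}_{H_1}(\pi_1)$ and $\delta \in \mathcal{R}_{H_2}(\pi_2)$. If $\sigma' \otimes \delta' \in \mathcal{R}_H(\Res \rho)$ satisfies $\sigma \sim_{(G_1,H_1)} \sigma'$, the same construction yields $\pi_1' \otimes \pi_2' \in \mathcal{R}_G(\pi)$ above $\sigma' \otimes \delta'$. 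Lemma \ref{GHEQ}(2) transports $\sigma \sim \sigma'$ to $\pi_1 \sim_{(G_1,H_1)} \pi_1'$; the bigraphic assumption on $\pi$ gives $\pi_2 \sim_{(G_2,H_2)} \pi_2'$; Lemma \ref{GHEQ}(1) pulls this back down to $\delta \sim_{(G_2,H_2)} \delta'$, as required.

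For $(\Rightarrow)$, assume $\Res_H^\Gamma \rho$ is bigraphic mod $\sim$. Given $\pi_1 \otimes \pi_2 \in \mathcal{R}_G(\pi)$, Frobenius reciprocity (Theorem \ref{KT}) produces $\rho_0 \in \mathcal{R}_\Gamma(\rho) \cap \mathcal{R}_\Gamma(\pi_1 \otimes \pi_2)$, and any choice of $\sigma \otimes \delta \in \mathcal{R}_H(\rho_0)$ places $\sigma \otimes \delta$ into $\mathcal{R}_H(\rho) \cap \mathcal{R}_H(\pi_1 \otimes \pi_2)$. If $\pi_1' \otimes \pi_2' \in \mathcal{R}_G(\pi)$ satisfies $\pi_1 \sim_{(G_1,H_1)} \pi_1'$, the same procedure supplies $\sigma' \otimes \delta' \in \mathcal{R}_H(\rho) \cap \mathcal{R}_H(\pi_1' \otimes \pi_2')$. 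Lemma \ref{GHEQ}(1) gives $\sigma \sim_{(G_1,H_1)} \sigma'$, the bigraphic hypothesis on $\Res \rho$ forces $\delta \sim_{(G_2,H_2)} \delta'$, and Lemma \ref{GHEQ}(2) pushes this up to $\pi_2 \sim_{(G_2,H_2)} \pi_2'$.

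The main (rather mild) obstacle is verifying that the class-level graph is well-defined: one must check that the $\sim_{(G_2,H_2)}$-class of $\delta$ extracted from $\rho_0$ does not depend on the choice of $\rho_0 \in \mathcal{R}_\Gamma(\rho) \cap \mathcal{R}_\Gamma(\pi_1 \otimes \pi_2)$ or of $\sigma \otimes \delta \in \mathcal{R}_H(\rho_0)$, and symmetrically in the $(\Leftarrow)$ step. This is precisely the content of the preceding lemma, which confines $\mathcal{R}_{H_i}(\rho_0)$ to a single $\sim_{(G_i,H_i)}$-class. Once this well-definedness is in hand, no multiplicity estimate, no cocycle normalisation, and no central-extension bookkeeping is needed, so the argument is noticeably shorter than the proof of Theorem \ref{Firstthm} that it parallels.
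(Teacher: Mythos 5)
Your argument is correct and rests on exactly the same two pillars as the paper's own proof: the unlabelled lemma confining $\mathcal{R}_{H_i}(\rho')$ and $\mathcal{R}_{G_i}(\Ind_{\Gamma}^{G}\rho')$ to single Rieffel classes for each irreducible $\rho'$, and Lemma \ref{GHEQ} transporting $\sim_{(G_i,H_i)}$ through $\Ind$ and $\Res$. The only difference is organizational: the paper decomposes $\rho\simeq\widehat{\oplus}_{i}\rho_i$ and attaches to each summand a quadruple of classes $([\sigma_i],[\delta_i],[\pi_i^{(1)}],[\pi_i^{(2)}])$ whose pairwise correspondence under $\Ind$/$\Res$ immediately yields the equivalence, whereas you transport individual pairs up and down via (Hilbert--Schmidt) Frobenius reciprocity borrowed from the proof of Theorem \ref{Firstthm}; both versions perform the same bookkeeping and neither needs the multiplicity or cocycle analysis.
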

\begin{proof}
Assume $\rho\simeq \widehat{\oplus}_{i\in I} \rho_i$, for $\rho_i \in \widehat{\Gamma}$. By the above lemma, for each  $\rho_i$,  assume   $\mathcal{R}_{H_1}( \rho_i)\subseteq [\sigma_i]\in \frac{\widehat{H_1}}{\sim_{(G_1, H_1)}}$, $\mathcal{R}_{H_2}( \rho_i)\subseteq [\delta_i]\in \frac{\widehat{H_2}}{\sim_{(G_2, H_2)}}$, and then  $\mathcal{R}_{H}( \rho_i)\subseteq [\sigma_i\otimes \delta_i]\in \frac{\widehat{H}}{\sim_{(G, H)}}$. \\
Similarly, assume  $\mathcal{R}_{G_1}( \Ind_{\Gamma}^{G}\rho_i)\subseteq [\pi^{(1)}_i]\in \frac{\widehat{G_1}}{\sim_{(G_1, H_1)}}$, $\mathcal{R}_{G_2}( \Ind_{\Gamma}^{G} \rho_i)\subseteq [\pi^{(2)}_i]\in \frac{\widehat{G_2}}{\sim_{(G_2, H_2)}}$, and then  $\mathcal{R}_{G}(\Ind_{\Gamma}^{G} \rho_i)\subseteq [\pi^{(1)}_i\otimes\pi^{(2)}_i]\in \frac{\widehat{G}}{\sim_{(G, H)}}$. \\
Moreover, $[\sigma_i]$ (resp. $[\delta_i]$) corresponds to $[\pi_i^{(1)}]$ (resp. $[\pi_i^{(2)}]$) by using the functors $\Ind_{H_1}^{G_1}$, $\Res_{H_1}^{G_1}$ ( resp. $\Ind_{H_2}^{G_2}$, $\Res_{H_2}^{G_2}$). This implies  that $[\sigma_i] \neq [\sigma_j]$ iff $ [\pi_i^{(1)}] \neq [\pi_j^{(1)}]$, and $[\delta_i] \neq [\delta_j]$ iff $ [\pi_i^{(2)}] \neq [\pi_j^{(2)}]$. Therefore the result holds.
\end{proof}
\subsubsection{Abelian case}\label{abeliancase}
 Keep the notations, and also assume  $(\rho, W) \in \Rep(\Gamma)$. Assume  $\frac{G_1}{H_1}\simeq \frac{G_2}{H_2}$, and both are  \textbf{abelian} groups. Firstly let us present some lemmas. Recall  $p: G_1\times G_2 \longrightarrow \frac{G_1}{H_1} \times \frac{G_2}{H_2}$. For $g_i\in G_i$, $p((g_1, g_2)\Gamma (g_1,g_2)^{-1})=p(\Gamma)$, so $ (g_1, g_2)\Gamma (g_1,g_2)^{-1} \subseteq \Gamma$. Dually,  $\Gamma$ is a normal subgroup of $G_1\times G_2$.  Moreover, $\frac{G_1\times G_2}{\Gamma} \simeq \frac{\Gamma}{H_1\times H_2} \simeq \frac{G_i}{H_i}$. Follow the notations of Lmm.\ref{GHEQ}. For simplicity, let us write $H=H_1\times H_2$, $G=G_1\times G_2$.

Without the consideration of the multiplicity, we obtain the next two results:
\begin{proposition}\label{Second}
For  $\rho \in \widehat{\Gamma}$, assume:
 \begin{itemize}
\item[(1)]  $\Res_{H}^{\Gamma} \rho$  is a bigraphic representation of $H$,
\item[(2)] for each $\sigma\in \mathcal{R}_{H_1}(\rho)$, $ \Ind_{H_1}^{G_1} \sigma$ is multiplicity-free.
\end{itemize}
Then  $\pi=\Ind_{\Gamma}^{G} \rho$ is a $\theta_1$-graphic  representation of $G$.
\end{proposition}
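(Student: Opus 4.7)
My plan is to parametrise $\mathcal{R}_G(\pi)$ via characters of $G/\Gamma$ and then use hypothesis~(2) to force the uniqueness of $\pi_2$ attached to a given $\pi_1$. Write $G=G_1\times G_2$ and $H=H_1\times H_2$. Since $\Gamma$ is normal in $G$ with $G/\Gamma\simeq G_1/H_1$ abelian and $\pi=\Ind_\Gamma^G\rho$ with $\rho\in\widehat{\Gamma}$, Lemma~\ref{CHI} applied to the pair $(G,\Gamma)$ shows that $\mathcal{R}_G(\pi)$ is a single orbit under tensoring by $\widehat{G/\Gamma}$: fixing any $\pi_0=\pi_1\otimes\pi_2^{(0)}\in\mathcal{R}_G(\pi)$, every element of $\mathcal{R}_G(\pi)$ is of the form $\pi_0\otimes\chi$ with $\chi\in\widehat{G/\Gamma}$. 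The inclusion $\widehat{G/\Gamma}\hookrightarrow\widehat{G/H}=\widehat{G_1/H_1}\times\widehat{G_2/H_2}$ sends $\chi$ to a pair $(\tilde\chi_1,\tilde\chi_2)$ satisfying $\tilde\chi_1=(\tilde\chi_2\circ\gamma)^{-1}$, since such characters are precisely those that vanish on the graph $\Gamma/H\simeq G_1/H_1$.

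I would then pick $\sigma\otimes\delta\in\mathcal{R}_H(\rho)$ with $\sigma\in\mathcal{R}_{H_1}(\pi_1)$ and $\delta=\theta_1(\sigma)$; such a pair exists by Frobenius applied to $\pi_1\otimes\pi_2^{(0)}\in\mathcal{R}_G(\pi)$. Exactly as in the $(\Rightarrow)$ direction of Theorem~\ref{Firstthm}, the bigraphic assumption together with the injectivity of $\theta_1$ on $\mathcal{R}_{H_1}(\rho)$ gives that $\gamma$ restricts to an isomorphism $I_{G_1}(\sigma)/H_1\xrightarrow{\sim}I_{G_2}(\delta)/H_2$. Hypothesis~(2) combined with Frobenius yields $m_{H_1}(\sigma,\pi_1)=1$, so in the Clifford decomposition $\pi_1\simeq\Ind_{I_{G_1}(\sigma)}^{G_1}\tilde\sigma$ with $\tilde\sigma\simeq\sigma\otimes\delta_\sigma$ projectively (Theorem~\ref{cliffordadmissible2}(3)), the projective representation $\delta_\sigma$ of $I_{G_1}(\sigma)/H_1$ is one-dimensional. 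Using Theorem~\ref{cliffordadmissible2}(2) and~(4), the stabilizer $S_{\pi_1}:=\{\xi\in\widehat{G_1/H_1}:\pi_1\otimes\xi\simeq\pi_1\}$ is then exactly the kernel of the restriction $\widehat{G_1/H_1}\to\widehat{I_{G_1}(\sigma)/H_1}$.

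Given any $\pi_1\otimes\pi_2\in\mathcal{R}_G(\pi)$, write $\pi_1\otimes\pi_2\simeq\pi_0\otimes\chi$ with $\chi=(\tilde\chi_1,\tilde\chi_2)$ as above. Comparing the $G_1$-factors forces $\tilde\chi_1\in S_{\pi_1}$, i.e.\ $\tilde\chi_2\in(\gamma^*)^{-1}(S_{\pi_1})$. Combining the kernel description of $S_{\pi_1}$ with the identification $\gamma(I_{G_1}(\sigma)/H_1)=I_{G_2}(\delta)/H_2$, this condition translates to $\tilde\chi_2|_{I_{G_2}(\delta)/H_2}=1$, equivalently $\tilde\chi_2|_{I_{G_2}(\delta)}=1$. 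Writing $\pi_2^{(0)}\simeq\Ind_{I_{G_2}(\delta)}^{G_2}\tilde\delta$ by Clifford, the last identity gives
\[
\pi_2\simeq\pi_2^{(0)}\otimes\tilde\chi_2\simeq\Ind_{I_{G_2}(\delta)}^{G_2}\bigl(\tilde\delta\otimes\tilde\chi_2|_{I_{G_2}(\delta)}\bigr)=\Ind_{I_{G_2}(\delta)}^{G_2}\tilde\delta\simeq\pi_2^{(0)},
\]
so $\mathcal{R}_{G_2}(\Theta_{\pi_1})=\{\pi_2^{(0)}\}$, which is the $\theta_1$-graphic condition.

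The main obstacle I expect is the projective bookkeeping of the second paragraph: hypothesis~(2) is exactly what forces $\delta_\sigma$ to be one-dimensional, and only that one-dimensionality produces a clean kernel description of $S_{\pi_1}$ under which the image of $(\gamma^*)^{-1}$ automatically lies inside $S_{\pi_2^{(0)}}$, without any symmetric hypothesis on the $G_2$-side. The abelian assumption enters twice — once so that Lemma~\ref{CHI} is available for $(G,\Gamma)$, and once in the Pontryagin identification of $\widehat{G/\Gamma}$ inside $\widehat{G/H}$.
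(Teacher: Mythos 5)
Your proof is correct, and it takes a genuinely different route from the paper's. The paper's own argument never exploits the normality of $\Gamma$ in $G_1\times G_2$: it runs the Hilbert--Schmidt Frobenius chain (\ref{eqthm1}) down to the inertia group $I_{\Gamma}(\sigma\otimes\delta)$, writes $\widetilde{\sigma\otimes\delta}\simeq(\sigma\otimes\delta)\otimes\rho_1$ and $\widetilde{\delta}\simeq\delta\otimes\delta_1$ as projective representations, and deduces from the nonvanishing of the intertwining space that $\rho_1$ is linearly isomorphic to $(1\otimes\delta_1)|_{I_{\Gamma}(\sigma\otimes\delta)}$; since $\rho_1$ depends only on $\rho$ and $\sigma\otimes\delta$, this pins down $\delta_1$, hence $\widetilde{\delta}$, hence $\pi_2$. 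You instead use $\Gamma\unlhd G$ with $G/\Gamma$ abelian to place all of $\mathcal{R}_{G}(\pi)$ in a single $\widehat{G/\Gamma}$-orbit via Lemma \ref{CHI} applied to the pair $(G,\Gamma)$, and reduce everything to computing the stabilizer $S_{\pi_1}$, which hypothesis (2) identifies with the kernel of restriction $\widehat{G_1/H_1}\to\widehat{I_{G_1}(\sigma)/H_1}$ through Theorem \ref{cliffordadmissible2}(2),(4). Both arguments consume the hypotheses at the same two places --- bigraphicity of $\rho|_{H}$ yields $\gamma(I_{G_1}(\sigma)/H_1)=I_{G_2}(\delta)/H_2$, and hypothesis (2) yields $\widetilde{\sigma}=\sigma$ --- but your version trades the projective bookkeeping of $\rho_1$ for a character-theoretic stabilizer computation, which is cleaner and makes the asymmetry of the statement (a $G_1$-side hypothesis, a $\theta_1$-side conclusion) more transparent: the twist $\tilde\chi_2$ is killed by restriction to $I_{G_2}(\delta)$ regardless of the dimension of the projective factor of $\widetilde{\delta}$. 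One point worth spelling out: the half of the inertia-group identity you actually need is $I_{G_2}(\delta)/H_2\subseteq\gamma(I_{G_1}(\sigma)/H_1)$, so that $\tilde\chi_2$ trivial on the image is trivial on all of $I_{G_2}(\delta)$; that inclusion is supplied by the $\theta_2$-graphic half of hypothesis (1), so the full bigraphic assumption is indeed used.
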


 \begin{proof}Assume $\pi_1\otimes \pi_2\in \mathcal{R}_{G}(\Ind_{\Gamma}^{G} \rho)$, and $\sigma\otimes \delta\in \mathcal{R}_{H}(\pi_1\otimes \pi_2) \cap \mathcal{R}_{H}(\rho)$.  Then $\gamma: \frac{I_{G_1}(\sigma)}{H_1} \simeq \frac{I_{G_2}(\delta)}{H_2}$ with the graph $\Gamma(\gamma)_{(\sigma, \delta)}= [(I_{G_1}(\sigma) \times I_{G_2}(\delta)) \cap\Gamma]/H$. Moreover, $ [(I_{G_1}(\sigma) \times I_{G_2}(\delta)) \cap\Gamma]=I_{\Gamma}(\sigma\otimes \delta)$.
 Assume $\pi_1\simeq \Ind^{G_1}_{I_{G_1}(\sigma)}\widetilde{\sigma}$, $\pi_2\simeq \Ind^{G_2}_{I_{G_2}(\delta)}\widetilde{\delta}$, $\rho\simeq \Ind^{\Gamma}_{I_{\Gamma}(\sigma\otimes \delta)}\widetilde{\sigma\otimes \delta}$.  Hence:
\begin{equation}\label{eqthm1}
\begin{aligned}
0\neq \HS-\Hom_{G}(\Ind_{\Gamma}^G \rho, \pi_1\otimes \pi_2) \simeq \HS-\Hom_{G}(\Ind_{I_{\Gamma}(\sigma\otimes \delta)}^G \widetilde{\sigma\otimes \delta}, \pi_1\otimes \pi_2)
\\ \simeq \HS-\Hom_{I_{\Gamma}(\sigma\otimes \delta)}( \widetilde{\sigma\otimes \delta}, \pi_1\otimes \pi_2)
 \simeq \HS-\Hom_{I_{\Gamma}(\sigma\otimes \delta)}( \widetilde{\sigma\otimes \delta}, \Ind_{I_{G_1}(\sigma)}^{G_1}\widetilde{\sigma}\otimes \Ind_{I_{G_2}(\sigma)}^{G_2}\widetilde{\delta}) \\
\simeq \HS-\Hom_{I_{\Gamma}(\sigma\otimes \delta)}( \widetilde{\sigma\otimes \delta},\widetilde{\sigma}\otimes \widetilde{\delta}).
 \end{aligned}
 \end{equation}

  By the condition (2),  let us  write  $\widetilde{\sigma}=  \sigma$,  and write $\widetilde{\delta} = \delta\otimes \delta_1$,  $\widetilde{\sigma\otimes \delta}=(\sigma \otimes \delta) \otimes \rho_{1}$  as projective representations. By (\ref{eqthm1}), $\widetilde{\sigma\otimes \delta}$ is a subrepresentation of $(\widetilde{\sigma}\otimes \widetilde{\delta})|_{I_{\Gamma}(\sigma\otimes \delta)}$. Hence $\rho_1$ is  a linearly isomorphic to a  subrepresentation of $(1\otimes \delta_1)|_{I_{\Gamma}(\sigma\otimes \delta)}$. Note that $(1\otimes \delta_1)|_{I_{\Gamma}(\sigma\otimes \delta)}$ is  irreducible. Hence $\rho_1$ is  linearly isomorphic to $(1\otimes \delta_1)|_{I_{\Gamma}(\sigma\otimes \delta)}$.

   If $\pi_1\otimes \pi_2' \in \mathcal{R}_{G}(\Ind_{\Gamma}^{G} \rho)$, then $\sigma'\otimes \delta'\in  \mathcal{R}_{H}(\pi_1\otimes \pi_2') \cap \mathcal{R}_{H}(\rho)$. Assume $\sigma\simeq \sigma'^g$, for some $g\in G_1$, and assume $(g, \gamma(g))\in \Gamma$. Then $\sigma\otimes \delta'^{\gamma(g)} \simeq \sigma'^g\otimes \delta'^{\gamma(g)}\in\mathcal{R}_{H}(\pi_1\otimes \pi_2') \cap \mathcal{R}_{H}(\rho)$. By graphic property, $\delta'^{\gamma(g)}\simeq \delta$. Hence $\delta\in \mathcal{R}_{H_2}(\pi'_2)$.

   Assume   $\pi_2\simeq \Ind^{G_2}_{I_{G_2}(\delta)}\widetilde{\delta}'$, and $\widetilde{\delta}'= \delta \otimes \delta_1'$ as projective representations. Similar to the above discussion,
   $\rho_1$ is linearly isomorphic to  $ (1\otimes \delta'_1)|_{I_{\Gamma}(\sigma\otimes \delta)}$. Therefore $ \delta_1 $ is linearly isomorphic to $\delta'_1$. By Clifford-Mackey theory, $\widetilde{\delta} \simeq \widetilde{\delta}'$  as $I_{G_2}(\delta)$-modules, so  $\pi_2\simeq \pi_2'$;   this is what we need to prove.
   \end{proof}

\begin{proposition}\label{thirdre}
For  $\rho \in \widehat{\Gamma}$, assume:
 \begin{itemize}
\item[(1)] $\pi=\Ind_{\Gamma}^{G} \rho$  is a bigraphic representation of $G$,
\item[(2)] for each $\pi_1\in \mathcal{R}_{H_1}(\pi)$, $ \pi_1|_{H_1}$ is multiplicity-free.
\end{itemize}
Then  $\Res_{H}^{\Gamma} \rho$ is a $\theta_1$-graphic  representation of $H$.
\end{proposition}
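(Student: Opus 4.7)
The plan is to adapt the proof of Proposition~\ref{Second}, dualized via Frobenius reciprocity: rather than fixing a $G_1$-irreducible $\pi_1$ and pinning down its $\pi_2$-partner, we fix $\sigma\in\mathcal{R}_{H_1}(\rho)$ and show that its $\delta$-partner in $\mathcal{R}_H(\rho)$ is unique. Suppose $\sigma\otimes\delta,\sigma\otimes\delta'\in\mathcal{R}_H(\rho)$; we must show $\delta\simeq\delta'$. By Clifford theory applied to $H\lhd\Gamma$ (abelian quotient), these two $H$-summands lie in a common $\Gamma$-orbit, so there exist $(g_1,g_2)\in\Gamma$ with $g_1\in I_{G_1}(\sigma)$ and $\delta^{g_2}\simeq\delta'$; thus it suffices to show that every such $g_2$ automatically lies in $I_{G_2}(\delta)$.

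Pick $\pi_1\in\mathcal{R}_{G_1}(\pi)$ with $\sigma\leq\pi_1|_{H_1}$, set $\pi_2:=\theta_1(\pi_1)$, and exploit condition~(2). By Thm.~\ref{cliffordadmissible2} the Clifford extension satisfies $\widetilde{\sigma}_1\simeq\sigma\otimes\delta_1^{(\pi_1)}$ with $\dim\delta_1^{(\pi_1)}=1$; Lemma~\ref{abeq2} (applied with the automatic identity $\widetilde{\sigma}_1^g\simeq\widetilde{\sigma}_1$ for $g\in I_{G_1}(\sigma)$) then forces the associated $2$-cocycle for $\sigma$ to be a coboundary, so $\widetilde{\sigma}_1$ is linearly equivalent to $\sigma\otimes\chi_1$ for a genuine character $\chi_1$ of $I_{G_1}(\sigma)/H_1$. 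Next, via the Mackey identity $\Res^{G}_{G_1\times H_2}\pi\simeq\Ind^{G_1\times H_2}_{H}(\rho|_H)$ (valid because $G=\Gamma\cdot(G_1\times H_2)$ and $\Gamma\cap(G_1\times H_2)=H$) and Theorem~\ref{HSSP}, chain the Frobenius reciprocities as in the proof of Theorem~\ref{Firstthm}$(\Leftarrow)$:
\[
 0\neq\HS-\Hom_H(\rho,\sigma\otimes\delta)\hookrightarrow\HS-\Hom_H(\rho,\pi_1|_{H_1}\otimes\delta)\simeq\HS-\Hom_G(\pi,\pi_1\otimes\Ind^{G_2}_{H_2}\delta).
\]
The bigraphic hypothesis forces the unique $\pi_1$-partner in $\pi$ to be $\pi_2$, yielding $\delta\in\mathcal{R}_{H_2}(\pi_2)$; since $\pi_2$ is a $G_2$-representation, inner conjugation by $g_2\in G_2$ also gives $\delta'\simeq\delta^{g_2}\in\mathcal{R}_{H_2}(\pi_2)$.

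To upgrade ``$\delta,\delta'\in\mathcal{R}_{H_2}(\pi_2)$'' to ``$\delta\simeq\delta'$'', we mirror the closing step of the proof of Proposition~\ref{Second}. Unfolding
\[
 \HS-\Hom_G(\pi,\pi_1\otimes\pi_2)\simeq\HS-\Hom_{I_\Gamma(\sigma\otimes\delta)}\bigl(\widetilde{\sigma\otimes\delta},\;\widetilde{\sigma}_1\otimes\widetilde{\delta}|_{I_\Gamma(\sigma\otimes\delta)}\bigr),
\]
and writing $\widetilde{\sigma\otimes\delta}\simeq(\sigma\otimes\delta)\otimes\rho_1$ as projective representations, the previous paragraph identifies $\rho_1$ linearly with $(\chi_1\otimes\delta_1^{(\pi_2)})|_{I_\Gamma(\sigma\otimes\delta)/H}$; the analogous computation carried out for $\sigma\otimes\delta'$ inside the same $\pi_1\otimes\pi_2$ yields a parallel parametrization of $\rho_1'$. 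Transferring the two parametrizations through the $\Gamma$-conjugation $(g_1,g_2)$ and invoking Theorem~\ref{cliffordadmissible2}(4) will force $\widetilde{\delta}\simeq\widetilde{\delta}'$ as projective $I_{G_2}(\delta)$-modules, whence $\delta\simeq\delta'$. The main obstacle will be this final transfer: matching the projective Clifford data across the two a priori distinct stabilizers $I_\Gamma(\sigma\otimes\delta)$ and $I_\Gamma(\sigma\otimes\delta')$ via $(g_1,g_2)$, and leveraging the rigidity of the projective decomposition of $\pi_2$ on the $\delta$-side without the aid of any multiplicity-free hypothesis on $\pi_2|_{H_2}$.
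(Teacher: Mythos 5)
Your reduction of the statement to ``every $g_2$ with $(g_1,g_2)\in\Gamma$, $g_1\in I_{G_1}(\sigma)$, already stabilizes $\delta$'' is the right target (in the paper's notation this is exactly $t_2=[I'_{G_2}(\delta):I_{G_2}(\delta)\cap I'_{G_2}(\delta)]=1$), and your use of the Mackey identity $\Res^{G}_{G_1\times H_2}\pi\simeq\Ind^{G_1\times H_2}_{H}\rho$ together with Theorem \ref{HSSP} to place $\delta$ inside $\pi_2=\theta_1(\pi_1)$ matches the paper's step I. But the proof has a genuine gap precisely where you flag the ``main obstacle'': the closing transfer is not an adaptation of Proposition \ref{Second}, and I do not see how to make it one. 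In Proposition \ref{Second} the final step compares two \emph{extensions} $\widetilde{\delta},\widetilde{\delta}'$ of one and the same $\delta$ over $I_{G_2}(\delta)$, which is exactly what Theorem \ref{cliffordadmissible2}(4) classifies; here you must distinguish two a priori different irreducibles $\delta,\delta'$ of $H_2$ lying in the same $\pi_2$, and Theorem \ref{cliffordadmissible2}(4) says nothing about that. Likewise, your intermediate deduction that the cocycle attached to $\sigma$ is a coboundary (via Lemma \ref{abeq2} and $\dim\delta_1^{(\pi_1)}=1$) is plausible but is never fed into an argument that constrains the $\delta$-side, which is the side carrying no multiplicity-free hypothesis.

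The paper closes this gap by a counting argument rather than a structural transfer. From three Frobenius reciprocity chains (for $\Hom(\rho,\pi_1\otimes\delta)$, $\Hom(\rho,\sigma\otimes\pi_2)$ and $\Hom(\rho,\pi_1\otimes\pi_2)$) it derives the relations $n_{\delta}t=t_1n_{\sigma}m$, $n_{\sigma}t=t_2n_{\delta}m$ and $n_{\sigma}/n_{\delta}=\sqrt{t_2/t_1}=\sqrt{s_2/s_1}$, where $t_i=\#\mathcal{R}_{H_i}(\Theta_{\,\cdot\,})$ are the orbit sizes you are trying to control. The bigraphic hypothesis enters quantitatively through Lemma \ref{eqeq} applied to $\widetilde{\sigma}$ over $I_{G_1}(\sigma)\cap I'_{G_1}(\sigma)$, giving $t_2=[I_{G_1}(\sigma):I_{G_1}(\sigma)\cap I'_{G_1}(\sigma)]=f_1e_1^2$ with $f_1e_1$ dividing $n_{\sigma}$ (since $\dim\sigma_1'=n_{\sigma}/(f_1e_1)$). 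Hypothesis (2) gives $n_{\sigma}=1$, which forces $f_1=e_1=1$ and hence $t_2=1$, i.e.\ $\mathcal{R}_{H_2}(\Theta_{\sigma})=\{\delta\}$. If you want to complete your version, you would need to replace the appeal to Theorem \ref{cliffordadmissible2}(4) by some such numerical input; without it, nothing in your argument rules out $\delta'\not\simeq\delta$.
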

\subsubsection{Proof of the proposition  \ref{thirdre}}
I) Assume $\sigma\otimes \delta\in \mathcal{R}_{H_1\times H_2}(\rho)$, $\sigma \in \mathcal{R}_{H_1}(\pi_1)$, for some $(\pi_1, V_1) \in \widehat{G_1}$. By Frobenius reciprocity, we have:
\begin{equation}\label{coroeq1}
\begin{aligned}
0\neq \HS-\Hom_{H_1\times H_2}(\rho, \pi_1\otimes \delta)\simeq \HS-\Hom_{G_1\times H_2}( \Ind_{H_1\times H_2}^{G_1\times H_2} \rho, \pi_1\otimes\delta)\\
 \simeq\HS-\Hom_{G_1\times H_2}( \Res^{G_1\times G_2}_{G_1\times H_2}\pi, \pi_1\otimes\delta) \simeq \HS-\Hom_{G_1\times G_2}( \pi, \pi_1\otimes \Ind_{H_2}^{G_2}\delta).
 \end{aligned}
 \end{equation}
 Hence there exists a unique $\pi_2\in \mathcal{R}_{G_2}(\Ind_{H_2}^{G_2}\delta)$, such that $\pi_1\otimes \pi_2\in \mathcal{R}_{G_1\times G_2}(\pi)$. Certainly, $\delta\in \mathcal{R}_{H_2}(\pi_2)$. Similarly,
 \begin{equation}\label{coroeq2}
 0\neq \HS-\Hom_{H_1\times H_2}(\rho,  \sigma \otimes \pi_2)  \simeq \HS-\Hom_{G_1\times G_2}( \pi, \Ind_{H_1}^{G_1}\sigma\otimes \pi_2).
 \end{equation}
 \begin{equation}\label{eq5}
\HS-\Hom_{H_1\times H_2}(\rho,  \pi_1\otimes \pi_2)  \simeq \HS-\Hom_{G_1\times G_2}( \pi, \Ind_{H_1}^{G_1}\pi_1\otimes \pi_2) \simeq \HS-\Hom_{G_1\times G_2}( \pi, \pi_1\otimes\Ind_{H_2}^{G_2} \pi_2)
 \end{equation}
  Assume $s_1=[G_1: I_{G_1}(\sigma)]$, $s_2=[G_2: I_{G_2}(\delta)]$, $\mathcal{R}_{H_1}(\pi_1)=\{ \sigma=\sigma_1, \cdots, \sigma_{s_1}\}$, $\mathcal{R}_{H_2}(\pi_2)=\{ \delta=\delta_1, \cdots, \delta_{s_2}\}$, $\sigma_i\simeq \sigma^{g^{(1)}_i}$, $\delta_j\simeq \delta^{g^{(2)}_j}$, for some $g^{(1)}_i \in G_1$, $g^{(2)}_j\in G_2$. For $\rho$, write  $\rho_{\delta}=\Theta_{\delta} \otimes \delta$,  $\rho_{\sigma}\simeq \sigma \otimes \Theta_{\sigma}$, as $H_1\times H_2$-modules. For simplicity,   assume $\mathcal{R}_{H_1}(\Theta_{\delta})=\{ \sigma=\sigma_1, \sigma_2, \cdots, \sigma_{t_1}\}$, $\mathcal{R}_{H_2}(\Theta_{\sigma})=\{ \delta=\delta_1, \delta_2, \cdots, \delta_{t_2}\}$. Let  us write $n_{\sigma}=m_{H_1}(\sigma, \pi_1)$, $n_{\delta}=m_{H_2}(\delta, \pi_2)$,  $m=m_{H_1\times H_2}(\rho, \sigma\otimes \delta)$, $t=m_{G_1\times G_2}(\pi, \pi_1\otimes \pi_2)$. Then by (\ref{coroeq1}), $n_{\delta}t= t_1n_{\sigma}m$;  by (\ref{coroeq2}), $n_{\sigma}t=t_2 n_{\delta}m$; by  (\ref{eq5}), $n_{\sigma}n_{\delta} m t_2 s_1=n_{\sigma}n_{\delta} m t_1 s_2 =tn_{\sigma}^2s_1=tn_{\delta}^2s_2$. These equalities imply that $\frac{n_{\sigma}}{n_{\delta}}= \frac{t}{t_1m}=\frac{t_2m}{t}=\sqrt{\frac{t_2}{t_1}}=\sqrt{\frac{s_2}{s_1}} \cdots (1)$.\\
 II) Assume $\gamma: \frac{I_{G_1}(\sigma)}{H_1} \simeq \frac{I_{G_2}'(\delta)}{H_2}$,  $\gamma: \frac{I'_{G_1}(\sigma)}{H_1} \simeq \frac{I_{G_2}(\delta)}{H_2}$. Then $\gamma:  \frac{I_{G_1}(\sigma)\cap I'_{G_1}(\sigma)}{H_1} \simeq \frac{I_{G_2}(\delta)\cap I_{G_2}'(\delta)}{H_2}$.
 Note that $I_{\Gamma}(\sigma\otimes \delta)=\{ (g_1,g_2) \in \Gamma \mid \sigma^{g_1} \simeq \sigma, \delta^{g_2}\simeq \delta\}=\Gamma \cap [I_{G_1}(\sigma)\cap I'_{G_1}(\sigma) \times I_{G_2}(\delta)\cap I_{G_2}'(\delta)] $. Assume $\pi_1\simeq \Ind^{G_1}_{I_{G_1}(\sigma)}\widetilde{\sigma}$, $\pi_2\simeq \Ind^{G_2}_{I_{G_2}(\delta)}\widetilde{\delta}$, $\rho\simeq \Ind^{\Gamma}_{I_{\Gamma}(\sigma\otimes \delta)}\widetilde{\sigma\otimes \delta}$. Assume $\widetilde{\sigma}\simeq \sigma\otimes \sigma_1$, $\widetilde{\delta} \simeq \delta\otimes \delta_1$, $\widetilde{\sigma\otimes \delta} \simeq \sigma\otimes \delta\otimes \rho_1$, as projective representations. Assume $\sigma$, $\delta$, $\sigma_1$, $\delta_1$, $\rho_1$ are $\alpha$-projective, $\beta$-projective, $\alpha^{-1}$-projective, $\beta^{-1}$-projective, $\alpha^{-1}\times \beta^{-1}$-projective representations respectively. Notice that $\sigma_i \otimes \delta \in \mathcal{R}_{H}(\rho)$ iff $\exists (h_i, \gamma(h_i))\in \Gamma$ such that $\sigma_i \otimes \delta  \simeq \sigma^{h_i}\otimes \delta^{\gamma(h_i)}$ iff $\gamma(h_i) \in I_{G_2}(\delta)$ or $h_i \in  I'_{G_1}(\sigma)$. Also for two such $h_i, h_j$, $\sigma^{h_i} \simeq \sigma^{h_j}$ iff $h_i h_j^{-1} \in I_{G_1}(\sigma)\cap I'_{G_1}(\sigma)$. Hence $t_1=[I'_{G_1}(\sigma): I_{G_1}(\sigma)\cap I'_{G_1}(\sigma)]=[I_{G_2}(\delta): I_{G_2}(\delta)\cap I'_{G_2}(\delta)]$.  Dually, $t_2=[I'_{G_2}(\delta): I_{G_2}(\delta)\cap I'_{G_2}(\delta)]=[I_{G_1}(\sigma): I_{G_1}(\sigma)\cap I'_{G_1}(\sigma)]$.\\
III) Consider now \begin{equation}\label{coroeq7}
\begin{aligned}
&\HS-\Hom_G(\pi, \pi_1\otimes \pi_2) \simeq \HS-\Hom_{G}(\Ind_{I_{\Gamma}(\sigma\otimes \delta)}^G \widetilde{\sigma\otimes \delta}, \pi_1\otimes \pi_2)\\
& \simeq \HS-\Hom_{I_{\Gamma}(\sigma\otimes \delta)}(\widetilde{\sigma\otimes \delta}, \pi_1\otimes \pi_2) \simeq \HS-\Hom_{I_{\Gamma}(\sigma\otimes \delta)}(\widetilde{\sigma\otimes \delta}, \widetilde{\sigma}\otimes \widetilde{\delta})\\
 &\simeq \HS-\Hom_{I_{G_1}(\sigma) \cap I'_{G_1}(\sigma) \times I_{G_2}(\delta) \cap I'_{G_2}(\delta)}(\Ind^{I_{G_1}(\sigma) \cap I'_{G_1}(\sigma) \times I_{G_2}(\delta) \cap I'_{G_2}(\delta)}_{I_{\Gamma}(\sigma\otimes \delta)}\widetilde{\sigma\otimes \delta}, \widetilde{\sigma}\otimes \widetilde{\delta}).
 \end{aligned}
 \end{equation}
Since $\pi$ is a bigraphic representation, $\widetilde{\sigma}\simeq \widetilde{\sigma} \otimes \chi_1$, for any $\chi_1 \in \widehat{(\frac{I_{G_1}(\sigma)}{I_{G_1}(\sigma) \cap I'_{G_1}(\sigma)})}$, and $\widetilde{\delta}\simeq \widetilde{\delta} \otimes \chi_2$, for any $\chi_2 \in \widehat{(\frac{I_{G_2}(\delta)}{I_{G_2}(\delta) \cap I'_{G_2}(\delta)})}$. Applying the result of Lmm.\ref{eqeq}, we obtain that $[I_{G_1}(\sigma):I_{G_1}(\sigma) \cap I'_{G_1}(\sigma)]=f_1e_1^2$, and similarly $[I_{G_2}(\delta):I_{G_2}(\delta) \cap I'_{G_2}(\delta)]=f_2e_2^2$. Hence $t_1=f_2e_2^2$, $t_2=f_1e^2_1$.
By Lmm.\ref{CHI2}, $t_2\leq n_{\sigma}^2$, $t_1\leq n_{\delta}^2$.  Assume irreducible $\widetilde{\sigma}'_1 \leq \widetilde{\sigma}|_{I_{G_1}(\sigma) \cap I'_{G_1}(\sigma)}  $, and irreducible $\widetilde{\delta}'_1 \leq \widetilde{\delta}|_{I_{G_2}(\delta) \cap I'_{G_2}(\delta)}$ such that $\HS-\Hom_{I_{\Gamma}(\sigma\otimes \delta)}(
\widetilde{\sigma\otimes \delta}, \widetilde{\sigma}'_1\otimes\widetilde{\delta}'_1) \neq 0$. Assume $\widetilde{\sigma}'_1 \simeq \sigma \otimes \sigma_1'$, $\widetilde{\delta}'_1 \simeq \delta \otimes \delta'_1$, as projective representations.
Then $\dim(\sigma_1')=\frac{n_{\sigma}}{f_1e_1}$, $\dim(\delta'_1)=\frac{n_{\delta}}{f_2e_2}$. \\
IV)  In case $n_{\sigma}=1$,  $f_1=e_1=t_2=1$. Hence $\mathcal{R}_{H_2}(\Theta_{\sigma})$ only contains one element $\delta$, and $\delta$ is determined directly by $\sigma$. So $\rho|_{H_1\times H_2}$ is a $\theta_1$-graphic representation.

 \subsection{}Finally  we can also get a result for finite index case. Keep the notations of the beginning subection \ref{abeliancase}.  Assume $n=\# \frac{G_i}{H_i}<+\infty $ and $n$ has no square factor.
\begin{corollary}
$\Res_{H}^{\Gamma} \rho$ is a bigraphic  representation of $H$ iff $\pi=\Ind_{\Gamma}^{G} \rho$  is a bigraphic representation of $G$.
\end{corollary}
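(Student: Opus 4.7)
The plan is to reduce the corollary to Propositions~\ref{Second} and~\ref{thirdre} by checking that the square-free hypothesis on $n$ makes all the multiplicity-free side-conditions in those propositions automatic, and then applying each proposition twice (once to get $\theta_1$-graphic, once to get $\theta_2$-graphic, via the symmetry $1\leftrightarrow 2$ built into the setup).

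First I would verify the key multiplicity-free inputs. By Corollary~\ref{mutifree}, since $\frac{G_i}{H_i}$ is a finite abelian group of square-free order, $\Res_{H_i}^{G_i}\pi_i$ is multiplicity-free for every $\pi_i\in\widehat{G_i}$. By Frobenius reciprocity (Theorem~\ref{KT}),
\[
m_{G_i}\bigl(\pi_i,\Ind_{H_i}^{G_i}\sigma_i\bigr) \;=\; m_{H_i}\bigl(\sigma_i,\Res_{H_i}^{G_i}\pi_i\bigr) \;\le\; 1
\]
for every $\sigma_i\in\widehat{H_i}$, so $\Ind_{H_i}^{G_i}\sigma_i$ is also multiplicity-free. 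Hence hypothesis (2) of Proposition~\ref{Second} and hypothesis (2) of Proposition~\ref{thirdre} both hold for any choice of the relevant irreducible constituents.

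For the direction $(\Rightarrow)$, suppose $\Res_H^\Gamma\rho$ is bigraphic. Decomposing $\rho$ into a Hilbert direct sum $\rho\simeq\widehat{\oplus}_j\rho_j$ with $\rho_j\in\widehat{\Gamma}$, each $\Res_H^\Gamma\rho_j$ inherits the bigraphic property, so Proposition~\ref{Second} applies and yields that $\Ind_\Gamma^G\rho_j$ is $\theta_1$-graphic. Because the whole setup is symmetric under interchanging the indices $1$ and $2$, the same argument with the roles of $(G_1,H_1)$ and $(G_2,H_2)$ swapped shows that $\Ind_\Gamma^G\rho_j$ is also $\theta_2$-graphic, hence bigraphic. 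Summing over $j$ gives the bigraphic property of $\pi=\Ind_\Gamma^G\rho$. The direction $(\Leftarrow)$ runs in the same way, substituting Proposition~\ref{thirdre} for Proposition~\ref{Second} and using the multiplicity-freeness of $\Res_{H_1}^{G_1}\pi_1$ directly from Corollary~\ref{mutifree}.

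The main obstacle I anticipate is not in either proposition individually but in the passage from the irreducible case (in which Propositions~\ref{Second}, \ref{thirdre} are stated) to a general $\rho\in\Rep(\Gamma)$: one must check that bigraphicness of $\widehat{\oplus}_j\rho_j$ is equivalent to bigraphicness of each summand with compatible gluing on the overlaps among $\mathcal{R}_G(\Ind_\Gamma^G\rho_j)$. This is not difficult because Waldspurger's lemmas (Lemmas~\ref{waldspurger1}, \ref{waldspurger2}) recover $\Theta_{\pi_1}$ intrinsically from the $\pi_1$-isotypic component of the whole induced representation, so the theta maps on overlapping summands necessarily coincide; but it is the one step requiring explicit bookkeeping before the propositions can be invoked blackbox.
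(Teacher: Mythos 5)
Your proposal is correct and takes essentially the same route as the paper, whose entire proof is the one-line citation of Corollary~\ref{mutifree} together with Propositions~\ref{Second} and~\ref{thirdre}; the details you supply (Frobenius reciprocity to convert multiplicity-freeness of $\Res_{H_i}^{G_i}\pi_i$ into multiplicity-freeness of $\Ind_{H_i}^{G_i}\sigma_i$, and the $1\leftrightarrow 2$ symmetry to upgrade $\theta_1$-graphic to bigraphic) are exactly the implicit content of that citation. The only place you go beyond the paper is the reduction from a general $\rho$ to its irreducible summands, which the paper leaves entirely unaddressed.
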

\begin{proof}
It is a consequence of Coro.\ref{mutifree}, and Props. \ref{Second}, \ref{thirdre}.
\end{proof}
\section{Examples}

  \begin{example}\label{simple1}
Let  $G_1=G_2=\Gamma=G$.
 \begin{itemize}
\item[(1)] $\Ind_{G}^{G\times G} \chi$  is a theta representation of $G\times G$, for  any character $\chi$ of $G$;
\item[(2)]  $\Ind_{G}^{G\times G} \sigma$  is not a  theta representation of $G\times G$, for  $(\sigma, W)\in \widehat{G}$, $\dim W\geq  2$.
\end{itemize}
\end{example}

\begin{example}
Let $F$ be a finite field of order $q=p^n$, $p:$ an odd prime number. Let $(V, \langle, \rangle)$ be a symplectic vector space over $F$ of dimension $2m$, and $H(V)$ the corresponding Heisenberg  group. Let  $\psi$ be  a non-trivial character of $F$. Let $\pi_{\psi}$ be the Schr\"odinger representation of $H(V)$ associated to $\psi$. Then $V/0\simeq H(V)/F$, with the graph $\Gamma=\{ (v_h, h)\mid h=(v_h, t)\in H(V), v_h\in V, t\in F\}\simeq H(V)$.  Then $\Res^{\Gamma}_{0\times F }\pi_{\psi}=q^m (1\otimes \psi) $, which is a bigraphic representation, and $\Ind_{\Gamma}^{V \times H(V) } \pi_{\psi}= \oplus_{\chi\in \Irr(V)} \chi  \otimes \pi_{\psi} $, which is  a $\theta_1$-graphic  representation.
\end{example}
Let $S_n$ denote the symmetric group on $n$ letters $\{1, \cdots, n\}$.
\begin{example}
   Let us consider   the wreath product groups $G_1=S_{m_1} \wr S_n$, $G_2=S_{m_2} \wr S_n$, and their normal subgroups $H_1=\underbrace{S_{m_1}\times \cdots \times S_{m_1}}_{n}, H_2=\underbrace{S_{m_2}\times \cdots \times S_{m_2}}_n$. Then  $G_1/H_1\simeq G_2/H_2 \simeq S_n$ with the graphic group $\Gamma=(S_{m_1}\times S_{m_2})\wr S_n$.   Let $(\sigma_i, W_i)\in \Irr(S_{m_i})$, and $1$ the trivial representation of $S_n$. Then  $\sigma_i \wr 1\in \Irr(S_{m_i} \wr S_n)$, $(\sigma_1\otimes\sigma_2)\wr 1\in \Irr(\Gamma)$, and $[(\sigma_1\otimes\sigma_2)\wr 1]|_{H_1\times H_2} \simeq \sigma_1^{\otimes n}\otimes \sigma_2^{\otimes n}\in \Irr(H_1\times H_2)$. Then  $\Ind_{\Gamma}^{G_1\times G_2}[( \sigma_1\otimes\sigma_2)\wr 1] \simeq \oplus_{\pi\in \Irr(S_n)} (\sigma_1\wr \pi) \otimes (\sigma_2\wr \check{\pi})$, which  is a theta representation.
\end{example}

Let $\pi_1, \pi_2$ be two different irreducible representations of $S_n$. Let us consider the representation  $\pi=[\pi_1\oplus \pi_2]\wr 1= [\pi_1\oplus \pi_2]^{\otimes m} \in \Rep(S_n \wr S_m)$. According to \cite[p.57, Section 2.59]{K2},
$$\pi\simeq  \sum_{k=0}^m \Ind_{(S_n \wr S_k) \times (S_n \wr S_{m-k})}^{S_n \wr S_{m}} [(\pi_1\wr 1)\otimes (\pi_2 \wr 1)].$$
As a representation of $\underbrace{S_n\times \cdots \times S_n}_m$, every component of $[\pi_1\oplus \pi_2]\wr 1$ has the forms $\delta_1\otimes \cdots \otimes  \delta_m$, with $\delta_i=\pi_1$ or $\pi_2$.
\begin{example}
For a decomposition  $m=m_1+m_2$, $ m_1, m_2\geq 1$.  Let us reorganizer above components by using  theta representations as follows:\\ Let $\lambda_1=\{1, \cdots, m_1\}$, and $\lambda_2=\{m_1+1,\cdots, m=m_1+m_2\}$, and $S_{\lambda_1}\simeq S_{m_1}$, $S_{\lambda_2}\simeq S_{m_2}$.  For $0\leq k\leq m$, let $k=k_1+k_2$ with $0\leq k_1\leq m_1$, $0\leq k_2\leq m_2$.  For the  pair $(k_1, k_2)$, we associate to two representations $D_{k_1}=$ the sum of the  forms  of $k_1$-number $\pi_1$ and $(m_1-k_1)$-number $\pi_2$, and $D_{k_2}=$ the sum of the forms of  $k_2$-number $\pi_1$ and $(m_2-k_2)$-number $\pi_2$. Then
$$D_{k_1}\simeq \Ind_{(S_n \wr S_{k_1}) \times (S_n \wr S_{m_1-k_1})}^{S_n\wr S_{m_1}} [(\pi_1\wr 1) \otimes (\pi_2\wr 1)],$$
$$D_{k_2}\simeq \Ind_{(S_n \wr S_{k_2}) \times (S_n \wr S_{m_2-k_2})}^{S_n\wr S_{m_2}} [(\pi_1\wr 1) \otimes (\pi_2\wr 1)].$$
Let $\pi_k=\oplus_{k_1+k_2=k, 0\leq k_1\leq m_1, 0\leq k_2\leq m_2} D_{k_1}\otimes D_{k_2}$, which is a theta representation of $(S_n\wr S_{m_1} )\times (S_n\wr S_{m_2})$. Then $\Res^{S_n\wr S_m}_{(S_n \wr S_{m_1}) \times (S_n \wr S_{m_2})} \pi\simeq \sum_{k=0}^m\pi_k$.
\end{example}

\subsection{Examples}
In the last section, we apply the results into unitary representations of compact lie groups by  giving   some examples from reductive dual pairs. Here we only  give some examples of  representations of finite dimension.

Let us  recall the classification of irreducible  reductive dual pairs from \cite[Chap.1]{MVW} related to compact groups directly. For reductive dual pairs in symplectic groups or Lie groups, one can see  \cite{AdBa}, \cite{Ad1}, and \cite{Ru}, etc. Let $(D, \tau)=(\R, id),(\C, id), (\C, -)$\footnote{ Here, we miss the quaternion case for the reason  that we limit us to the compact group case.} be a field(commutative) with an involution $\tau$, $\epsilon=\pm 1$. Let $(W, \langle, \rangle_W)$ be a right $\epsilon$-Hermitian vector space over $D$ of dimension $n$. Let $H, H'$  ¡¡denote an irreducible reductive  dual pair (or called Howe dual pair) in $U(W)$.  To fit our purpose, let us modify $W$ into a complex vector space: \\
1. If $D=\C$, let $V=W$; \\
2. If $D=\R$, let $V=W\otimes_{\R}\C$.\\
 Let $(\omega, U(W), V)$ denote the associated representation(may not unitary). Let us write $\C=\R+i\R$, and $\mathbb{H}=\R+i\R+j\R+k\R$.
\begin{lemma}\label{doucom}
$\End_{H}(V)'=\End_{H'}(V)$ and $\End_{H'}(V)'=\End_{H}(V)$.
\end{lemma}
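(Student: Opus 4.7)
The strategy is to reduce to the finite-dimensional double commutant theorem together with the explicit classification of irreducible reductive dual pairs recalled just before the lemma. Write $A$ and $B$ for the $\C$-subalgebras of $\End(V)$ generated by $H$ and $H'$ respectively. Because $H$ and $H'$ are compact subgroups of $U(W)$ and $V$ is finite-dimensional, every element of $H$ or $H'$ acts unitarily on $V$ after the prescribed complexification, so $A$ and $B$ are semisimple $\ast$-subalgebras of $\End(V)$ containing the identity. The finite-dimensional double commutant theorem therefore yields $A'' = A$ and $B'' = B$, and the lemma reduces to the single identity $B = \End_H(V)$: taking commutants then gives $\End_H(V)' = B' = \End_{H'}(V)$, and the companion identity $\End_{H'}(V)' = \End_H(V)$ follows by symmetry.

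The inclusion $B \subseteq \End_H(V)$ is immediate from the commutation of $H$ and $H'$ inside $U(W)$. For the reverse inclusion I would proceed case by case through the classification. In each case $V$ admits a canonical tensor decomposition $V \simeq V_1 \otimes_{\C} V_2$ (in the case $D = \R$, after the prescribed complexification $V = W\otimes_{\R}\C$) in such a way that $H$ acts by $g\mapsto \omega_1(g)\otimes \id_{V_2}$ on an irreducible module $V_1$, while $H'$ acts by $h\mapsto \id_{V_1}\otimes \omega_2(h)$ on an irreducible module $V_2$. Granted this, Schur's lemma gives $\End_H(V) = \id_{V_1}\otimes \End(V_2)$, and the Peter-Weyl/Burnside argument applied to the compact irreducible action of $H'$ on $V_2$ shows that the image of $H'$ generates $\End(V_2)$ as an algebra, whence $B = \End_H(V)$. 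The model case $(H, H') = (U(p), U(q))$ inside $U(pq)$ with $V = \C^p \otimes \C^q$ makes the mechanism completely transparent.

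The main obstacle is the real case $D = \R$: for each real entry in the MVW list one has to verify that the complexified action really does decompose $V = W\otimes_{\R}\C$ as a tensor product on which the two factors act on separate $\C$-irreducible components, and that the compact real form is Zariski dense in its complexification so that its image already spans the full matrix algebra of the relevant factor. Both points are routine from the structure theory of classical groups, but they constitute essentially the only concrete verification the proof demands; once they are in hand, the previous paragraph's tensor-product Schur/Peter-Weyl argument delivers the equality $B = \End_H(V)$ uniformly across all cases.
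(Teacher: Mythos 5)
Your overall strategy (double commutant theorem plus showing that the algebra generated by $H'$ is all of $\End_H(V)$) is sound, but the step you rely on to prove $B=\End_H(V)$ fails for a substantial part of the classification. You assert that in each case $V$ admits a tensor decomposition $V\simeq V_1\otimes_{\C}V_2$ with $H$ acting irreducibly on $V_1$ and $H'$ irreducibly on $V_2$. This is false for the type II pairs, where $W=[X_1\otimes_{\C}X_2]\oplus[X_1^{\ast}\otimes_{\C}X_2^{\ast}]$ is a direct \emph{sum} of two tensor products, and also for the $D=\C$ pairs sitting inside a real symplectic group, where $V=W\otimes_{\R}\C\simeq W\oplus\overline{W}$. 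In those cases $\End_H(V)$ is a direct sum of two full matrix algebras, the single-factor Schur/Burnside argument does not apply as stated, and the equality $B=\End_H(V)$ additionally requires that the two summands be non-isomorphic both as $H$-modules and as $H'$-modules (e.g.\ that the standard representation of $U(n)$ is not isomorphic to its dual); if either non-isomorphism failed, $B$ would be a proper diagonal subalgebra of $\End_H(V)$. These facts are true, but they are exactly the content you would have to verify case by case, and your proposal does not engage with them.

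The paper avoids all of this: it quotes the lemme of \cite[p.13]{MVW}, which already states that $\End_{DH}(W)$ and $\End_{DH'}(W)$ are mutual commutants in $\End_D(W)$ --- essentially the defining property of an irreducible reductive dual pair --- so the case $D=\C$ is immediate. For $D=\R$ it only has to check that this mutual-commutant property survives complexification, which it does by writing $\End_{\C}(V)=\End_{\R}(W)\oplus i\End_{\R}(W)$, decomposing any $\phi=\phi_1+i\phi_2\in\End_H(V)$ into real and imaginary parts lying in $\End_H(W)$, and applying the MVW lemma to each part. No case analysis and no Burnside-type generation argument is needed. If you want to keep your route, you must either restrict the tensor-product claim to the cases where it actually holds and treat the direct-sum cases separately, or simply cite the MVW double-commutant lemma as the paper does and reduce your work to the base-change step.
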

\begin{proof}
According to \cite[p.13, lemme]{MVW}, $\End_{DH}(W)$ is the commutant of $\End_{DH'}(W)$ in $\End_D(W)$, and vice-versa. Hence when $D=\C$, the result holds. \\
If $D=\R$, $\End_{\C}(V)=\End_{\R}(W)\otimes_{\R}\C\simeq \End_{\R}(W)\oplus i\End_{\R}(W)$. If $\phi=\phi_1+i\phi_2\in \End_{H}(V)$, then for  $h\in H$, $\phi h =h \phi$,  implies $\phi_i  h =h\phi_i$, so $\phi_i\in \End_{H}(W)$. Hence $\End_H(V) \subseteq \End_{H}(W)+i\End_{H}(W) $. The another inclusion is clearly right. Therefore $\End_H(V)\simeq \End_{H}(W)+i\End_{H}(W)$. Similarly,   $\End_{H'}(V) \simeq \End_{H'}(W)+i\End_{H'}(W)$. If $\psi_1+i\psi_2 \in \End_{H}(V)'$, then $\psi_i \circ\phi=\phi\circ \psi_i$, for any $\phi\in \End_{H}(W)$. Then by  \cite[p.13, lemme]{MVW}, $\psi_i \in  \End_{H'}(W)$. Therefore, $\End_{H}(V)'=\End_{H'}(V)$.
\end{proof}

If $H$  is a   complex group, having a    compact real form $K_H$.  It is known that the restriction of  a  finite  dimensional algebraic regular(cf. \cite[Section 1.5]{GoWa})  irreducible representation of $H$  to $K_H$ is also irreducible. In this case, we can replace $H$  by $K_H$, and then  $\End_{H}(V)=\End_{K_H}(V)$.

Let $(H_1, H_2)$ be a pair of compact groups. Assume $(H_1,H_2)=$ one above $(H, H')$ or their  compact real forms. Up to an isomorphism, we modify $\omega|_{H_1H_2}$ to be a unitary representation; under such   a change,  the lemma \ref{doucom} also holds.
\begin{lemma}
$\omega$ is a theta representation of $H_1\times H_2$.
\end{lemma}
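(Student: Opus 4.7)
The plan is to invoke Proposition \ref{theta}, which characterizes theta representations via the double commutant condition $\mathcal{A}=\mathcal{B}'$, and then feed it Lemma \ref{doucom} as the essential input.

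First, I would set up the relevant operator algebras exactly as in subsection on some observations: let $\mathcal{A}$ be the strong closure in $\mathcal{B}(V)$ of the algebra $A$ generated by $\omega(H_1)$, and $\mathcal{B}$ the strong closure of the algebra $B$ generated by $\omega(H_2)$. Because $\omega$ is unitary and $H_i$ contains inverses, $A$ and $B$ are $*$-subalgebras of $\mathcal{B}(V)$ containing the identity, so by von Neumann's double commutant theorem $\mathcal{A}=A''$ and $\mathcal{B}=B''$. The commutants of these algebras are precisely the intertwining algebras:
\[
\mathcal{A}'=A'=\End_{H_1}(V),\qquad \mathcal{B}'=B'=\End_{H_2}(V).
\]

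Next I would invoke Lemma \ref{doucom}: $\End_{H_1}(V)'=\End_{H_2}(V)$. Rewriting this identity in the language of the algebras above gives $(\mathcal{A}')'=\mathcal{B}'$, that is, $\mathcal{A}''=\mathcal{B}'$. Combined with the equality $\mathcal{A}=\mathcal{A}''$ furnished by the double commutant theorem, we conclude
\[
\mathcal{A}=\mathcal{B}'.
\]
This is exactly condition (3) of Proposition \ref{theta}, so $\omega$ is a theta representation of $H_1\times H_2$.

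The argument is essentially a direct translation of Lemma \ref{doucom}, so there is no real obstacle once one accepts that setup; the only point to be careful about is the passage through the double commutant theorem. In the finite-dimensional situation considered here (and stressed at the start of the Examples section) this is automatic because every $*$-subalgebra of $\End(V)$ containing the identity is already closed in every reasonable topology, so $A=\mathcal{A}=A''$ and similarly for $B$. In the more general unitary setting (compact real forms acting on infinite-dimensional $V$), one just needs that $A$ and $B$ are $*$-invariant and contain $\Id$, which follows from unitarity, and then von Neumann's theorem applies verbatim. The last thing I would note, if desired, is that since $\omega$ is a theta representation, the associated correspondence $\pi_1\longleftrightarrow \theta(\pi_1)$ between $\mathcal{R}_{H_1}(\omega)$ and $\mathcal{R}_{H_2}(\omega)$ is precisely the classical Howe correspondence for the dual pair $(H_1,H_2)$, recovering Howe's original result in this compact setting as a formal consequence of Proposition \ref{theta} and Lemma \ref{doucom}.
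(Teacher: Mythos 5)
Your proof is correct and follows essentially the same route as the paper, which simply cites Proposition \ref{theta} and Lemma \ref{doucom}; you have merely filled in the short chain $\mathcal{A}=\mathcal{A}''=\End_{H_1}(V)'=\End_{H_2}(V)=\mathcal{B}'$ that the paper leaves implicit. Nothing further is needed.
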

\begin{proof}
It follows from Prop.\ref{theta} and Lmm.\ref{doucom}.
\end{proof}
We will apply Thm.\ref{Firstthm} to these representations from isometric groups to some similitude  groups.   Let  us  list some cases by following Vignera's work \cite[Chap.1]{MVW}. Notations: $J=\begin{bmatrix} 0& -I\\
I& 0\end{bmatrix} \in \GL(2n, \C)$,  $F=\R \textrm{ or } \C$, $\T=\{ g\in \C^{\times} \mid |g|=1\}$,
 $$U(n)=\{ g\in \GL(n,\C) \mid \overline{g}^{T}g=I\},$$
 $$O(n, \C)=\{ g\in \GL(n,\C) \mid g^{T}g=I\}, $$
  $$O(n)=\{ g\in \GL(n, \R)\mid g^Tg=I\}, $$
$$\Sp(n, F)=\{ g\in \GL(2n, F)\mid g^T Jg=J\},$$
   $$\Sp(n)=U(2n)\cap \Sp(n, \C) \simeq \{g\in \GL(n, \mathbb{H}) \mid \overline{g}g=I\},$$
 $$\GSp(n, F)=\{ g\in \GL(2n, F)\mid g^T Jg=\lambda(g)J, \lambda(g) \in F^{\times}\}, $$
$$\GU(n,\C)=\{ g\in \GL(n, \C)\mid \overline{g}^{T}g=\lambda(g)I, \lambda(g) \in \R^{\times}\}, $$
$$\GO(n,\C)=\{ g\in \GL(n,\C) \mid g^{T}g=\lambda(g)I, \lambda(g) \in \C^{\times} \},$$
$$TSp(n)=\{ g\in \GSp(n, \C)\mid g=h\diag(t, \cdots, t), h\in \Sp(n), t \in \T\}, $$
$$TO(n)=\{ g\in \GL(n,\C) \mid g=h\diag(t, \cdots, t), h\in O(n), t \in \T\}. $$
Notice: $TSp(n)/Sp(n)\simeq \T/\{\pm 1\} \simeq TO(n)/O(n)$.
\subsubsection{Example 1}
Let $(W=\C^{2n}, \langle, \rangle)$ be a canonical symplectic vector space over $\C$,  $U(W)=\Sp(n, \C)$.\\
II) $W=[X_1\otimes_{\C} X_2] \oplus [X_1^{\ast}\otimes_{\C} X_2^{\ast}]$, $ X_i=\C^{n_i}$, $n=n_1n_2$, $H= \GL(n_1, \C)$, $H'= \GL(n_2, \C)$, $H_1=  U(n_1)$, $H_2=  U(n_2)$,  $G_1=TU(n_1)$, $G_2=TU(n_2)$, $\Gamma=\{ (g_1, g_2)\in G_1\times G_2\mid \lambda(g_1)\lambda(g_2)=1\}$.\\
I)  $W=W_1\otimes_{\C} W_2$, $W_1=\C^{n_1}$ a symmetric vector space over $\C$, $W_2=\C^{2n_2}$ a symplectic vector space over $\C$, $n=n_1n_2$,  $H\simeq  O(n_1,\C)$, $H'\simeq \Sp(n_2,\C)$, $H_1= O(n_1)$, $H_2= Sp(n_2)$, $G_1= TO(n_1)$, $G_2= TSp(n_2)$, $\Gamma=\{ (g_1, g_2)\in G_1\times G_2 \mid g_1g_2\in \Sp(n, \C)\}$.
\subsubsection{Example 2}
Let $(W=\R^{2n}, \langle, \rangle)$ be a canonical symplectic vector space over $\R$,  $U(W)=\Sp(n, \R)$, $V=\C^{2n}$.\\
Ia) $W=[X_1\otimes_{\C} X_2] \oplus [X_1^{\ast}\otimes_{\C} X_2^{\ast}]$, $X_i=\C^{n_i}$,  $n=2n_1n_2$, $H= \GL(n_1, \C)$, $H'= \GL(n_2, \C)$, $H_1=  U(n_1)$, $H_2=  U(n_2)$,  $G_1=TU(n_1)$, $G_2=TU(n_2)$, $\Gamma=\{ (g_1, g_2)\in G_1\times G_2\mid \lambda(g_1)\lambda(g_2)=1\}$. \\
Ib)  $W=W_1\otimes_{\C} W_2$, $ \langle, \rangle=\Tr(\langle, \rangle_1\otimes \langle, \rangle_2)$,  $W_1=\C^{n_1}$  a symmetric vector space over $\C$, $W_2=\C^{2n_2}$ a symplectic vector space over $\C$, $n=2n_1n_2$, $H\simeq O(n_1, \C)$, $H'\simeq \Sp(n_2, \C)$, $H_1=O(n_1)$, $H_2=\Sp(n_2)$, $G_1= TO(n_1)$,$G_2= TSp(n_2)$, $\Gamma=\{ (g_1, g_2)\in G_1\times G_2 \mid g_1g_2\in \Sp(n, \C)\}$.
\subsubsection{Example 3}
Let $(W=\C^{n}, \langle, \rangle)$ be a canonical  symmetric  vector space over $\C$,  $U(W)=O(n,\C)$.\\
Ia)  $W=W_1\otimes_{\C} W_2$, $W_i=\C^{n_i}$ symmetric vector spaces over $\C$, $n=n_1n_2$, $H\simeq O(n_1, \C)$, $H'\simeq O(n_2, \C)$, $H_1=O(n_1)$, $H_2=O(n_2)$, $G_1= TO(n_1)$, $G_2= TO(n_2)$ .\\
Ib) $W=W_1\otimes_{\C} W_2$, $W_i=\C^{2n_i}$ symplectic  vector spaces over $\C$, $n=4n_1n_2$, $H\simeq Sp(n_1, \C)$, $H'\simeq Sp(n_2, \C)$, $H_1=Sp(n_1)$, $H_2=Sp(n_2)$, $G_1= TSp(n_1)$, $G_2= TSp(n_2)$.
\subsubsection{Example 4}
Let $(W=\R^{n}, \langle, \rangle)$ be a  symmetric  vector space over $\R$ of type  $(n,0)$,     $U(W)\simeq O(n)$.\\
 Ia) $W=W_1\otimes_{\R} W_2$,  $W_i$  symmetric vector spaces over $\R$ of type $(n_i,0)$,  $H_i\simeq O(n_i)$, $G_i=TO(n_i)$.
\subsubsection{Example 5}
Let $(W=\C^{n}, \langle, \rangle)$ be an $\epsilon$-Hermitian vector space over $\C$,  $U(W)\simeq U(n)$\\
Ia) $W=W_1\otimes_{\C} W_2$, $W_i=\C^{n_i}$   $\epsilon_i$-Hermitian vector spaces over $\C$, $i=1,2$, $\epsilon_1\epsilon_2=\epsilon$, $n=n_1n_2$, $H_i=U(n_i)$.

\labelwidth=4em
\addtolength\leftskip{25pt}
\setlength\labelsep{0pt}


\begin{thebibliography}{99}

 \bibitem{AdBa} J. Adams, D. Barbasch,
 {\it Reductive dual pair correspondence for complex groups}, J. Funct. Anal. 132, no. 1 (1995), 1-42.

\bibitem{Ad1} J. Adams,
 {\it The Theta Correspondence Over $\R$,  Harmonic Analysis, Group Representations, Automorphic Forms and Invariant Theory}, 1-39. Lect. Notes Ser. Inst. Math. Sci. Natl. Univ. Singap. 12. 2007.


\bibitem{AAG}A. Aizenbud, N. Avni, and D. Gourevitch,
 {\it Spherical pairs over close local fields}, Comment. Math. Helv. 87 (2012), 929-962.

\bibitem{AM} T.Austin, C. C. Moore,
{\it Continuity properties of measurable group cohomology}, Math. Ann. 356 (2013), 885-937.

\bibitem{BeHa} B. Bekka, P. de la Harpe,
 {\it Unitary representations of groups, duals, and characters}, an expository book, arXiv:1912.07262.

\bibitem{BernZ} I.N. Bernstein, A.V.Zelveinsky,
 {\it Representations of the group $\GL(n,F)$ where $F$ is a non-archimedean local field}, Russ. Math. Surv. 31(3), 1-68(1976).


\bibitem{Bo} N. Bourbaki,
 {\it Espaces vectoriels topologiques}, Chap. 1 \`a 5, Springer-Verlag, Berlin, 2007.


\bibitem{Br} F. Bruhat,
 {\it Lectures on Lie Groups and Representations of Locally Compact Groups}, Tata Institute of Fundamental Research, Bombay, 1968.


\bibitem{BushH} C.J. Bushnell, G. Henniart,
 {\it The Local Langlands Conjecture for $GL(2)$},
 Grundlehren der Mathematischen Wissenschaften, vol. 335, Springer-Verlag, Berlin, 2006.

\bibitem{Ca2} W. Casselman,
{\it Canonical extensions of Harish-Chandra modules to representations of $G$}, Canad. J. Math. 41 (3) (1989) 385-438.

\bibitem{Ca} W. Casselman,
{\it Introduction to the theory of admissible representations of p -adic reductive groups},
preprint, 1995.



\bibitem{Co} D.L.Cohn,
{\it Measure Theory}, a Birkh\"auser Advanced Texts: Basler Lehrb\"ucher, Birkh\"auser/ Springer, New York, 1980.(Reprint in China)


\bibitem{Co1} J.Conway,
{\it A course in functional analysis}, Graduate Texts in Mathematics, vol. 96, 2nd edn., Springer, New York, 1990.

\bibitem{Co2} J.Conway,
{\it A Course in Operator Theory}, Amer. Math. Soc., New York, 2000.

\bibitem{CH}Y. de Cornulier, P. de la Harpe,
{\it Metric Geometry of Locally Compact Groups},
 EMS Tracts in Mathematics, vol. 25, European Mathematical Society (EMS), Z\"urich, 2016.

\bibitem{CuRe} C.W. Curtis, I. Reiner,
{\it Methods of Representation Theory. Vol. I. With
applications to finite groups and orders},
Wiley Classics Lib. John Wiley and Sons, 1990.


\bibitem{Fa} R.C. Fabec,
{\it Fundamentals of Infinite Dimensional Representation Theory},
 Chapman \& Hall/CRC,  2000.


\bibitem{Fo} G.B. Folland,
{\it A course in abstract harmonic analysis}, Studies in Advanced Mathematics, CRC Press, Boca Raton, FL, 1995.



 \bibitem{Ga} W.T.Gan,
{\it Theta correspondence: recent progress and applications}, Proceedings of the International Congress of Mathematicians-Seoul 2014. Vol. II, 343-366, Kyung Moon Sa, Seoul, 2014.


 \bibitem{Gan} W.T.Gan, S. Takeda.
{\it A proof of the Howe duality conjecture}, J. Amer. Math. Soc. 29, no. 2 (2016), 473-493.

\bibitem{Ga2} P. Garrett, Notes at  http://www.math.umn.edu/egarrett/.

\bibitem{Ge} S. Gelbart, {\it Weil's representation and the spectrum of the metaplectic group}, Lecture Notes in Math., vol 530, Springer-Verlag, Berlin, 1976.

\bibitem{GoWa} R. Goodman, N. R. Wallach, {\it Symmetry, representations, and invariants}, Graduate Texts in Mathematics, vol. 255, Springer, 2009.

\bibitem{HM1} E. Hewitt, K. Ross,
{\it Abstract Harmonic Analysis I. Structure of Topological groups, Integration Theory, Group Representations}, Second Edition, Grundlehren der Mathematischen Wissenschaften, vol. 115, Springer-Verlag, Berlin,  1979.(Reprint in China)

\bibitem{HM2} E. Hewitt, K. Ross,
{\it Abstract harmonic analysis II. Structure and analysis for compact groups, Analysis on locally compact Abelian groups},  Grundelehern der Mathematischen Wissenschasften, vol. 152, Springer-Verlag,  Berlin,  1970.(Reprint in China)



\bibitem{HM} K.H. Hofmann, S.A. Morris,
{\it The Structure of Compact Groups}, third edition, revised and augmented, De Gruyter Studies in Mathematics, vol. 25, De Gruyter, Berlin, 2013.

\bibitem{Ho1} R. Howe,
{\it  Remarks on classical invariant theory}, Trans. Amer. Math. Soc. 313(2)(1989), 539-570.

\bibitem{Ho2} R. Howe,
{\it Transcending classical invariant theory}, J. Amer. Math. Soc. 2 (1989), 535-552.

\bibitem{Ho3} R. Howe,  et al., {\it  Lectures: In honor of Professor Roger Howe on the occasion of his 70th birthday}, 2015. https://math.mit.edu/conferences/howe/lectures.php.

\bibitem{Is} I. M. Isaacs,
{\it Character Theory of Finite Groups}, AMS Chelsea Publishing, Providence, RI, 2006.


\bibitem{KT} E. Kaniuth, K.F. Taylor,
{\it Induced Representations of Locally Compact Groups}, Cambridge Tracts in Math., vol. 197, Cambridge University Press, Cambridge, 2013.


\bibitem{K2} A. Kerber,
{\it  Representations of Permutation Groups II},  Lecture Notes in Math., vol. 495,  Springer-Verlag,  1975.


\bibitem{KL} A. Kleppner, R. L. Lipsman,
{\it The Plancherel formula for group extensions. I},  Ann. Sci. \'Ecole Norm. Sup.  (1972), 459-516.


\bibitem{Ku} S.Kudla,
{\it On the local theta-correspondence}, Invent. Math. 83, no. 2 (1986), 229-255.

\bibitem{Ku2} S.Kudla,
 {\it  Notes on the local theta correspondence (lectures at the European School in Group Theory)}, preprint, available at http://www.math.utotonto.ca/~skudla/castle.pdf, 1996.

\bibitem{Ma1} G.W.Mackey,
{\it Induced representations of locally compact groups. I}, Ann. of Math. (2) 55 (1952), 101-139.


\bibitem{Ma2} G.W.Mackey,
{\it  Borel structure in groups and their duals}, Trans. Am. Math. Soc. 85(1957), 134-165.


\bibitem{Ma3}G.W.Mackey,
{\it Unitary representations of group extensions I}, Acta Math. 99 (1958), 265-311.

\bibitem{Ma4} G.W.Mackey,
{\it The Theory of Unitary Group Representations}, University of Chicago Press, Chicago, Ill.-London, 1976.



\bibitem{MVW} C.Moeglin,  M.-F.Vigneras, J.-L. Waldspurger,
{\it Correspondances de Howe sur un corps $p$-adique},
Lecture Notes in Math.,  vol. 1921, Springer-Verlag, New York, 1987.


\bibitem{Mo0} C.C. Moore,
{\it  On the Frobenius reciprocity theorem for locally compact groups}, Pacific J. Math.12 (1) (1962), 359-365.

\bibitem{Mo1} C.C. Moore,
{\it Extensions and low dimensional cohomology theory of locally compact groups. I, II}, Trans. Amer. Math. Soc. 113 (1964).

\bibitem{Mo2} C.C. Moore,
{\it Group extensions of p-adic and adelic linear groups}, Publ. Math. IH\'ES 35 (1968), 157-222.

\bibitem{Mo3} C.C. Moore,
{\it Group extensions and cohomology for locally compact groups. III, IV}, Trans. Amer. Math. Soc. 221 (1976),  1-33, 35-58.

 \bibitem{Pr} D.Prasad,
{\it Ext-analogues of branching laws}, Proceedings of the International Congress of Mathematicians-Rio de Janeiro 2018. Vol. II. Invited lectures, 1367-1392, World Sci. Publ., Hackensack, NJ, 2018.

 \bibitem{Ra}  S. Rallis,
{\it On the Howe duality conjecture}, Compos. Math. 51 (1984), 333-399.

\bibitem{Ri} M.A. Rieffel,
{\it Normal subrings and induced representations}, J. Algebra 59 (1979), 364-386.

\bibitem{Ru} H. Rubenthaler,
{\it Les paires duales dans les algebr\`es de Lie r\'eductives}, Ast\'erisque 219 (1994).

\bibitem{Wa} J.-L.Waldspurger,
{\it D\'emonstration d'une conjecture de dualitšŠ de Howe dans le cas $p$-adique,$p\neq2$}, in Festschrift in Honor of I. I. Piatetski-Shapiro on the Occasion of his Sixtieth Birthday, Israel Math. Conf. Proc. 2, Weizmann, Jerusalem, 1990, pp. 267-324.


\bibitem{Wan} C.-H. Wang,
{\it On the local theta representation}, preprint.

\bibitem{War} G. Warner,
{\it Harmonic Analysis on Semi-Simple Lie Groups. I}, Springer-Verlag, New York, 1972.

\bibitem{We} A. Weil,
{\it L'int\'egration dans les groupes topologiques et ses applications}, Actualiti\'es Scientifiques et Industrielles, no.869, Paris, 1938.

\bibitem{We2} A. Weil,
{\it  Sur certains groupes d'op\'erateurs unitaires},
Acta Math. 111 (1964), 143-211.

\end{thebibliography}
\end{document}